\newcommand{\R}{\mathbb{R}}
\newcommand{\inr}[1]{\left\langle #1 \right\rangle}
\newcommand{\PP}{\mathbb{P}}
\newtheorem{theorem}{Theorem}[section]
\newtheorem{proposition}[theorem]{Proposition}
\newtheorem{lemma}[theorem]{Lemma}
\newtheorem{corollary}[theorem]{Corollary}
\theoremstyle{definition}
\newtheorem{definition}[theorem]{Definition}
\newtheorem{assumption}[theorem]{Assumption}
\newtheorem{example}[theorem]{Example}
\newtheorem{remark}[theorem]{Remark}
\newcommand{\E}{\mathbb{E}}
\renewcommand{\tilde}{\widetilde}
\numberwithin{equation}{section}
\newcommand{\eps}{\varepsilon}
\begin{document}

\title{Do we really need the Rademacher complexities?}

\author{Daniel Bartl}
\address{Department of Mathematics, University of Vienna, Austria}
\email{daniel.bartl@univie.ac.at}
\author{Shahar Mendelson}
\address{Department of Mathematics, ETH Zurich, Switzerland}
\email{shahar.mendelson@gmail.com}
\date{\today}

\begin{abstract}
We study the fundamental problem of learning with respect to the squared loss in a convex class. The state-of-the-art sample complexity estimates in this setting rely on Rademacher complexities, which are generally difficult to control. We prove that, contrary to prevailing belief and under minimal assumptions, the sample complexity is not governed by the Rademacher complexities but rather by the behaviour of the limiting gaussian process. In particular, all such learning problems that have the same $L_2$-structure---even those with heavy-tailed distributions---share the same sample complexity. This constitutes the first universality result for general convex learning problems.

The proof is based on a novel learning procedure, and its performance is studied by combining optimal mean estimation techniques for real-valued random variables with Talagrand's generic chaining method.
\end{abstract}

\maketitle
\setcounter{equation}{0}
\setcounter{tocdepth}{1}

\maketitle
\section{Introduction}
On the face of it, the question of whether the Rademacher complexities are truly needed seems preposterous: the Rademacher complexities have had a central role in machine learning and statistical learning theory for more than 25 years; they appear in almost every state-of-the-art sample complexity estimate, and their appearance at each instance is simply natural.

Intuitively, the Rademacher complexities capture the `empirical oscillations'  of the class of functions participating in a statistical learning problem.
The idea is that if those oscillations---corresponding to the `size' of the class---are too large, no learning procedure is able to select the right function from the class.
And unfortunately, because the Rademacher complexities capture what is a rather subtle local behaviour within the class, these parameters are often difficult to handle.

At the same time,  the uniform central limit theorem shows that the limits of the empirical processes used to define the Rademacher complexities as the sample size $N$ tends to infinity, are determined by the behaviour of the limiting gaussian process, which is simply the gaussian process indexed by the underlying class of functions.
But there were no indications that the limiting gaussian process had anything to do with the sample complexity of the problem (beyond special situations in which the empirical oscillations were trivially dominated by that gaussian process). 
That is completely natural: unlike the limiting gaussian process, the sample complexity is a `finite $N$ phenomenon'.
 In fact, the sharpest known sample complexity estimates---given in terms of the Rademacher complexities---, tend to be much larger than their limiting gaussian versions, especially in learning problems that involve heavy tailed random variables.

The main result of this article is therefore totally unexpected: that the limiting gaussian process determines the error of an optimal learning procedure.

\begin{tcolorbox}
We design a learning procedure that exhibits finite sample errors  that depend purely on the behaviour of the underlying gaussian process, even in heavy tailed problems.
The (more complicated and often much larger) Rademacher complexities are not needed.
\end{tcolorbox}

To explain what this means and how it improves the current state-of-the-art, we must first describe the setting of the learning problems we focus on.

\subsection{The classical learning problem}

In what follows we consider the classical learning scenario: learning with respect to the squared loss, and for an underlying class of functions that is convex and compact. The study of such learning problems has been of central importance in statistical learning theory since the very early days of the area---almost 60 years ago.
We refer e.g.\ to the books \cite{AnBa99,BuvdG11,DGL96,Kolt08,Mas06,Ts09,vdG00,VaCh74a} and references therein.

A learning problem consists of a triplet: a probability space $(\Omega,\mu)$, a compact class of functions $F \subset L_2(\mu)$ and a target random variable $Y$; if $X$ is distributed according to $\mu$, the triplet is denoted by $(F,X,Y)$.  The crucial point is that the learner does not have access to all this information. Usually, all that the learner knows are the identities of $\Omega$ and of the class of functions $F$, but not the underlying distribution $X$ or the target random variable $Y$. Still, the goal is to find a good approximation of $Y$ in $F$, and the information that can be used to achieve that goal (other than the identity of $F$), is a sample $(X_i,Y_i)_{i=1}^N$, selected independently according to the joint distribution $(X,Y)$.

Naturally, `good approximation' can be understood in a variety of ways, and is dictated by the choice of a \emph{loss functional}. Here, we focus on what is arguably the most natural choice of a loss --- the \emph{squared loss}: the loss caused by `guessing' $f(x)$ instead to $y$ is $\ell(f(x)-y)=(f(x)-y)^2$.

For every $f \in F$, the \emph{risk} of the function $f$ is $\E(f(X)-Y)^2$, and thanks to the convexity and compactness of  $F$,  a risk minimizer in $F$ exists and is unique.
Set $f^*={\rm argmin}_{f \in F} \E (f(X)-Y)^2$ and let
$$
{\mathcal L}_f(X,Y) =(f(X)-Y)^2-(f^*(X)-Y)^2
$$
be the \emph{excess loss} associated with $f$.
It is standard to verify that thanks to the convexity of $F$,
\begin{equation} \label{eq:convexity condition}
\|f-f^*\|_{L_2}^2 \leq \E {\mathcal L}_f;
\end{equation}
this so-called  \emph{convexity condition} plays an important role in what follows.

\begin{remark}
Keep in mind that the learner has no information on ${\mathcal L}_f$---even on the given sample,  the value ${\mathcal L}_f(X_i,Y_i)$ depends on $f^\ast(X_i)$,  but obviously, $f^*$ is not known to the learner.
\end{remark}

The two main problems in this setup are \emph{estimation} and \emph{prediction}: how to use the sample to produce a function $\widehat{f} \in F$ for which $\|\widehat{f}-f^*\|_{L_2}^2$ is `small' (estimation), or that the conditional expectation $\E {\mathcal L}_{\widehat{f}}$ is small (prediction). In both cases, the key question is the best accuracy/confidence tradeoff  that one can have: given a sample size $N$, to find the best accuracy $\eps$ and confidence parameter $\delta$ for which, with probability at least $1-\delta$,
\begin{equation} \label{eq:tradeoff}
\|\widehat{f}-f^*\|_{L_2}^2 \leq \eps \ \ {\rm and/or} \ \ \E {\mathcal L}_{\widehat{f}} \leq \eps.
\end{equation}
Naturally, identifying the \emph{learning procedure} $\widehat{f}$ that achieves that tradeoff is essential.

An alternative and equivalent formulation of the same problem is to obtain the optimal \emph{sample complexity} estimate: that is, given the desired accuracy and confidence levels, find the smallest sample size  $N$ for which  \eqref{eq:tradeoff} holds.

\vspace{0.5em}
Identifying the optimal tradeoff/sample complexity in this setup has been the topic of hundreds of articles  and dozens of books (see for example  the references mentioned previously and \cite{BaBoMe05,GyKoKrWa02,lugosi2019risk,MenACM,mendelson2017local}).
And what was believed to be an (almost) complete solution in the setup studied here was established in \cite{lugosi2019risk} (and then extended to the non-convex case in \cite{mendelson2019unrestricted}): it was shown that there is a learning procedure for which the tradeoff is governed by the behaviour of the empirical oscillations,  leading to the \emph{Rademacher complexities} of the triplet.

\subsection{The Rademacher complexities}

Over the years, the (localized) Rademacher averages have been the key ingredient in the analysis of learning problems.
Among their roles, they are used to define the Rademacher complexities of a triplet $(F,X,Y)$.

Let $(\eps_i)_{i=1}^N$ be independent, symmetric, $\{-1,1\}$-valued random variables that are independent of $(X_i,Y_i)_{i=1}^N$.
For $r>0$, set
\[\Phi_N(r) = \E \sup_{u \in (F-F) \cap rD} \left| \frac{1}{\sqrt{N}} \sum_{i=1}^N \eps_i u(X_i) \right| ,\]
where $D$ is the $L_2$ unit ball and $F-F=\{ f-h : f,h\in F\}$.

In a similar fashion, let $\xi= f^\ast(X)-Y$, set $\xi_i=f^\ast(X_i)-Y_i$, and define
\[\Phi_{N,\xi} (r) = \E \sup_{u \in (F-F) \cap rD} \left| \frac{1}{\sqrt{N}} \sum_{i=1}^N \eps_i \xi_i u(X_i)  \right| . \]

\begin{definition}
\label{def:rademacher}
For $\kappa>0$ let
\begin{equation} \label{eq:emp-r-Q}
r_{\mathbbm{Q}}^{\rm rad}(\kappa)
= \inf\left\{ r>0: \Phi_N(r)  \leq \kappa \sqrt{N} r\right\}.
\end{equation}
\end{definition}

Clearly, the set $F-F$ is star-shaped around $0$: if $u \in F-F$ then for any $\lambda \in [0,1]$, $\lambda u \in F-F$. As a result, it is straightforward to verify that $\frac{1}{r}\Phi_N(r)$ is decreasing for $r>0$---because the sets $(F-F) \cap rD$ become `richer' as $r$ decreases.
Moreover, $r_{\mathbbm{Q}}^{\rm rad}(\kappa) $ is a solution to the equation $\Phi_N(r)  = \kappa \sqrt{N} r$; when $r \geq r_{\mathbbm{Q}}^{\rm rad}(\kappa) $ we have that $\Phi_N(r)  \leq \kappa \sqrt{N} r$; and when $r \leq r_{\mathbbm{Q}}^{\rm rad}(\kappa) $ the reverse inequality holds.

It was shown in \cite{mendelson2017local} that for a suitable choice of the constant $\kappa$, $r_{\mathbbm{Q}}^{\rm rad}(\kappa) $ captures the `intrinsic complexity' of the class, and in particular, is the dominating factor in the sample complexity of `low noise' problems. The noise level $\sigma$ is defined by
\[\sigma^2=\E(f^*(X)-Y)^2=\|f^\ast-Y\|_{L_2}^2,\]
and the problem is considered to have \emph{low noise} if $\sigma \leq  r_{\mathbbm{Q}}^{\rm rad}(\kappa) $ (for a more detailed explanation, see \cite{BaBoMe05} and \cite{mendelson2017local}).

As it happens, when the noise level exceeds $r_{\mathbbm{Q}}^{\rm rad}(\kappa)$, the dominating factor is a different fixed point.

\begin{definition}
Set $\xi=f^*(X)-Y$, and for $\kappa>0$ define
\begin{equation} \label{eq:emp-r-M}
r_{\mathbbm{M}}^{\rm rad}(\kappa)  = \inf\left\{ r>0: \Phi_{N,\xi}(r) \leq \kappa \sqrt{N} r^2\right\}.
\end{equation}
\end{definition}

Once again, using the star-shape property of $F-F$, it is evident that $r_{\mathbbm{M}}^{\rm rad}(\kappa)$ is given by a fixed point condition; that for $r \geq r_{\mathbbm{M}}^{\rm rad}(\kappa)$ we have that $\Phi_{N,\xi}(r) \leq \kappa \sqrt{N} r^2$; and when $r \leq r_{\mathbbm{M}}^{\rm rad}(\kappa)$ the reverse inequality holds.

\begin{tcolorbox}
The fixed points $r_{\mathbbm{Q}}^{\rm rad}(\kappa)$ and $r_{\mathbbm{M}}^{\rm rad}(\kappa)$ are the \emph{Rademacher complexities} associated with the triplet $(F,X,Y)$.
\end{tcolorbox}

The sharpest bound on the accuracy/confidence tradeoff (and on the sample complexity) in our setting was established in \cite{lugosi2019risk}, and is based on the Rademacher complexities.

The other two complexity parameters that were used in \cite{lugosi2019risk} are fixed points given in terms of packing numbers of the class $F$ with respect to the $L_2$ norm. Denote by ${\mathcal M}(A,rD)$ the cardinality of a maximal $r$-separated subset of $A$ with respect to the $L_2$ norm.

\begin{definition}
For $\kappa,\gamma>0$ let
\begin{equation} \label{eq:covering-fixed-point-1}
\lambda_{\mathbbm{Q}}(\kappa,\gamma) =  \inf\{ r>0: \log {\mathcal M}((F-F) \cap rD,\gamma r D) \leq \kappa^2 N\},
\end{equation}
and
\begin{equation} \label{eq:covering-fixed-point-2}
\lambda_{\mathbbm{M}}(\kappa,\gamma) =  \inf\{ r>0: \log {\mathcal M}((F-F) \cap rD,\gamma r D) \leq \kappa^2 N r^2\}.
\end{equation}
\end{definition}

To formulate the current estimate from \cite{lugosi2019risk}  we require the following:

\begin{assumption} \hfill
\label{ass:LugMen}
\begin{itemize}
\item For every $f,h \in F \cup \{0\}$, $\|f-h\|_{L_4} \leq L \|f-h\|_{L_2}$;
\item  for every $f \in F$, $\|f-Y\|_{L_4} \leq L\|f-Y\|_{L_2}$;
\item $\|f^*-Y\|_{L_2} \leq \overline{\sigma}$ for some known  constant $\overline{\sigma}>0$.
\end{itemize}
\end{assumption}

The final condition in Assumption \ref{ass:LugMen} means that the learner has some apriori information on the noise level of the problem and the resulting accuracy/confidence tradeoff depends on $\overline{\sigma}$.

The first two components of Assumption \ref{ass:LugMen} are that the class $F \cup \{0\} $ satisfies $L_4-L_2$ norm equivalence with constant $L$, as does the class $\{f-Y : f \in F\}$; neither  assumption is really restrictive.

\begin{example}
There are many scenarios in which  $X \in \R^d$  is a centred random vector and satisfies that for every $z \in \R^d$,  $\|\inr{X,z}\|_{L_4} \leq L \|\inr{X,z}\|_{L_2}$; therefore, every class of linear functionals of the random vector  satisfies the first part of Assumption \ref{ass:LugMen}.
Here we present two such examples.
Let $x$ be a mean-zero, variance $1$ random variable, for which $\E x^4 \leq c L^4$ for a well-chosen absolute constant $c$.
The random vector $X=(x_1,...,x_d)$ that has independent copies of $x$ as coordinates satisfies  $L_4-L_2$ norm equivalence with constant $L$.

At the other, light-tailed end of the spectrum,  one can show that for any log-concave random vector,  every $p \geq 2$ and every $z \in \R^d$,
\begin{align}
\label{eq:psi1}
\|\inr{X,z}\|_{L_p} \leq Lp\|\inr{X,z}\|_{L_2}
\end{align}
 for $L$ that is an absolute constant.
 Indeed, \eqref{eq:psi1}  follows from Borell's inequality (see, e.g.\ \cite{artstein2015asymptotic}).
 Recalling that for a random variable $x$ and $\alpha \in [1,2]$,
$$
\|x\|_{\psi_\alpha} = \sup_{p \geq 2} \frac{\|x\|_{L_p}}{p^{1/\alpha}},
$$
\eqref{eq:psi1} means that  a log-concave random vector satisfies a $\psi_1-L_2$ norm equivalence for an absolute constant $L$.
\end{example}

The following theorem is the main result in \cite{lugosi2019risk} and is the current `gold standard'  estimate on learning in convex classes with respect to the squared loss.

\begin{theorem} \label{thm:main-LugMen}
There are constants $c,c_0,c_1$ and $c_2$ that depend only on $L$ for which the following holds.
Let
\begin{equation} \label{eq:fixed-points-LugMen}
r_{\rm rad}^*=\max\left\{\lambda_{\mathbbm{Q}}(c_1,c_2),\lambda_{\mathbbm{M}}(c_1/\overline{\sigma},c_2),r_{\mathbbm{Q}}^{\rm rad}(c_1),r_{\mathbbm{M}}^{\rm rad}(c_1) \right\}~,
\end{equation}
and fix $r \geq 2r_{\rm rad}^*$. There exists a procedure that, based on the data
${\mathcal D}_N=(X_i,Y_i)_{i=1}^N$ and the values of $L$, $\overline{\sigma}$ and $r$, selects a
function $\widehat{f} \in F$ which satisfies that with probability at least
$$
1-\exp\left(-c_0N \min\left\{1,\frac{r^2}{\overline{\sigma}^2}\right\}\right),
$$

$$
\|\widehat{f}-f^*\|_{L_2} \leq c r \ \ {\rm  and } \ \
\E \bigl({\mathcal L}_{\widehat{f}} \,| \,{\mathcal D}_N \bigr) \leq cr^2.
$$
\end{theorem}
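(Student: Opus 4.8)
The plan is to prove Theorem~\ref{thm:main-LugMen} by a \emph{median-of-means tournament}, a robust surrogate for empirical risk minimisation. Write $\psi=f-f^\ast\in F-F$ and $\xi_i=f^\ast(X_i)-Y_i$, so that ${\mathcal L}_f=\psi(X)^2+2\psi(X)\xi$, and recall that first-order optimality of $f^\ast$ yields $\E\psi\xi\ge 0$ for every $f\in F$ (which in particular gives the convexity condition \eqref{eq:convexity condition}). Using the prescribed $L$, $\overline\sigma$ and $r\ge 2r_{\rm rad}^\ast$, split $\{1,\dots,N\}$ into $n$ equal blocks $B_1,\dots,B_n$, with the block length tuned to $\overline\sigma$ and $r$ so that $n\asymp_L N\min\{1,r^2/\overline\sigma^2\}$. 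For $g\in F$ and a block $B_j$ set
\[
\overline{\mathcal L}^{(j)}_g=\frac1{|B_j|}\sum_{i\in B_j}\bigl[(g(X_i)-Y_i)^2-(f^\ast(X_i)-Y_i)^2\bigr],
\]
the $j$-th block average of the (unknown) excess loss of $g$, and let $\widehat f\in\operatorname*{argmin}_{g\in F}\sup_{h\in F}\operatorname*{Med}_{1\le j\le n}\bigl(\overline{\mathcal L}^{(j)}_g-\overline{\mathcal L}^{(j)}_h\bigr)$. Although $\overline{\mathcal L}^{(j)}_g$ is not itself available to the learner, the differences $\overline{\mathcal L}^{(j)}_g-\overline{\mathcal L}^{(j)}_h$ are --- each equals the block average of $(g(X_i)-Y_i)^2-(h(X_i)-Y_i)^2$ --- so $\widehat f$ is well defined; the whole argument reduces to bounding the block averages $\overline{\mathcal L}^{(j)}_g$ from below, uniformly in $g$.

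\emph{The two uniform estimates.} Everything rests on the following, each to hold with probability at least $1-\exp(-c_0 N\min\{1,r^2/\overline\sigma^2\})$. \emph{(E1) Quadratic part:} for every $\psi\in F-F$ with $\|\psi\|_{L_2}\ge r$, at least $\tfrac9{10}$ of the blocks satisfy $\frac1{|B_j|}\sum_{i\in B_j}\psi(X_i)^2\ge c_1\|\psi\|_{L_2}^2$. \emph{(E2) Multiplier part:} for every $\psi\in F-F$, at least $\tfrac9{10}$ of the blocks satisfy $\bigl|\frac1{|B_j|}\sum_{i\in B_j}\psi(X_i)\xi_i-\E\psi\xi\bigr|\le c_2\max\{\|\psi\|_{L_2}^2,r^2\}$. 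For a single $\psi$, (E1) is a Paley--Zygmund estimate: the first part of Assumption~\ref{ass:LugMen} gives $\P\bigl(\psi(X)^2\ge\tfrac12\|\psi\|_{L_2}^2\bigr)\ge c(L)$, and since the blocks are long enough ($|B_j|\gtrsim L^4$), Chebyshev on each block followed by a Chernoff bound over the blocks upgrades this to high probability. The passage to the supremum over $\psi$ is carried out scale by dyadic scale $\rho\ge r$: a union bound over an $L_2$-net of $(F-F)\cap\rho D$ is affordable because $\rho\ge\lambda_{\mathbbm Q}(c_1,c_2)$ (see \eqref{eq:covering-fixed-point-1}), while the oscillation of $\psi\mapsto\frac1{|B_j|}\sum_{i\in B_j}\psi(X_i)^2$ over a net cell, after contraction (the square is $2\rho$-Lipschitz on $[-\rho,\rho]$) and symmetrisation, is a Rademacher average as in \eqref{eq:emp-r-Q} that is $\lesssim\rho^2$ precisely because $\rho\ge r_{\mathbbm Q}^{\rm rad}(c_1)$; since $\psi^2\ge 0$ this step needs no robustification and may just as well be run on the full sample. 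Estimate (E2) follows the same template, except that now the per-block fluctuation of $\psi(X)\xi$ has variance at most $\|\psi\|_{L_4}^2\|\xi\|_{L_4}^2\le L^4\overline\sigma^2\|\psi\|_{L_2}^2$ by the last two parts of Assumption~\ref{ass:LugMen}, so the choice of block length makes the block-median deviate from $\E\psi\xi$ by $\lesssim\|\psi\|_{L_2}^2$; the admissible net size at scale $\rho\ge r$ is exactly what $\lambda_{\mathbbm M}(c_1/\overline\sigma,c_2)$ encodes (see \eqref{eq:covering-fixed-point-2}), and the in-cell oscillation of the \emph{multiplier} process $\psi\mapsto\frac1{\sqrt{|B_j|}}\sum_{i\in B_j}\xi_i\psi(X_i)$ is $\lesssim\rho^2$ because $\rho\ge r_{\mathbbm M}^{\rm rad}(c_1)$ (see \eqref{eq:emp-r-M}). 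Talagrand's generic chaining, applied to this multiplier process, is what keeps the in-cell bound sharp even for heavy-tailed $\xi$.

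\emph{Assembling the bound.} Work on the intersection of the events in (E1) and (E2), of probability at least $1-\exp(-c_0 N\min\{1,r^2/\overline\sigma^2\})$ after adjusting $c_0$, and fix the constants so that $c_1>4c_2$ ($c_1$ is the Paley--Zygmund constant, and $c_2$ is shrunk by enlarging the blocks by a constant factor). For $h\in F$, put $\psi=h-f^\ast$; since (E1) and (E2) each fail on at most $\tfrac1{10}$ of the blocks, on at least $\tfrac45$ of them $\overline{\mathcal L}^{(j)}_h=\frac1{|B_j|}\sum_{i\in B_j}\psi(X_i)^2+\frac2{|B_j|}\sum_{i\in B_j}\psi(X_i)\xi_i\ge 2\E\psi\xi-2c_2 r^2\ge -2c_2 r^2$, using $\psi^2\ge 0$ and $\E\psi\xi\ge 0$; moreover, if $\|\psi\|_{L_2}\ge r$ then $\overline{\mathcal L}^{(j)}_h\ge(c_1-2c_2)\|\psi\|_{L_2}^2$ on those blocks. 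Consequently $\operatorname*{Med}_j\overline{\mathcal L}^{(j)}_h\ge -2c_2 r^2$ for every $h$, so the value of $f^\ast$ in the tournament, which (as $\overline{\mathcal L}^{(j)}_{f^\ast}=0$) equals $\sup_h\bigl(-\operatorname*{Med}_j\overline{\mathcal L}^{(j)}_h\bigr)$, is at most $2c_2 r^2$; by minimality the value of $\widehat f$ is at most $2c_2 r^2$ as well, and specialising its inner supremum to $h=f^\ast$ gives $\operatorname*{Med}_j\overline{\mathcal L}^{(j)}_{\widehat f}\le 2c_2 r^2$. Now put $\psi=\widehat f-f^\ast$. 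Were $\|\psi\|_{L_2}\ge r$, then on at least $\tfrac45$ of the blocks $\overline{\mathcal L}^{(j)}_{\widehat f}\ge(c_1-2c_2)\|\psi\|_{L_2}^2\ge(c_1-2c_2)r^2>2c_2 r^2$, whence $\operatorname*{Med}_j\overline{\mathcal L}^{(j)}_{\widehat f}>2c_2 r^2$, a contradiction; therefore $\|\widehat f-f^\ast\|_{L_2}<r\le cr$. Finally, applying (E2) at scale $r$ once more, on at least $\tfrac9{10}$ of the blocks $\overline{\mathcal L}^{(j)}_{\widehat f}\ge 2\bigl(\E\psi\xi-c_2 r^2\bigr)$, so $2\E\psi\xi-2c_2 r^2\le\operatorname*{Med}_j\overline{\mathcal L}^{(j)}_{\widehat f}\le 2c_2 r^2$, i.e.\ $\E\psi\xi\le 2c_2 r^2$; hence $\E\bigl({\mathcal L}_{\widehat f}\,|\,{\mathcal D}_N\bigr)=\|\widehat f-f^\ast\|_{L_2}^2+2\E\psi\xi\le(1+4c_2)r^2\le cr^2$. (The original argument of \cite{lugosi2019risk} replaces this minimax procedure by a pairwise ``home-and-away'' tournament preceded by a median-of-means estimate of $\|f-h\|_{L_2}$, but the analytic content is the same.)

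\emph{The main obstacle.} The step carrying essentially all the difficulty is (E2): uniform control of the multiplier process at the stated confidence under nothing stronger than $L_4$--$L_2$ norm equivalence of $\{f-Y:f\in F\}$. It demands, simultaneously, (i) a sharp generic-chaining bound for the per-block multiplier process, so that the within-cell oscillation is genuinely of order $\|\psi\|_{L_2}^2$ at the fixed point $r_{\mathbbm M}^{\rm rad}$ rather than the much larger quantity a crude $L_4$ estimate would give; (ii) a per-block concentration in which the heavy tails of $\xi$ enter only through a second moment --- which is exactly what lets $L_4$--$L_2$ suffice, but forces the block length, and hence the confidence exponent, to deteriorate like $r^2/\overline\sigma^2$; and (iii) the bookkeeping that keeps $\log{\mathcal M}$ below the per-block budget at every scale $\rho\ge r$ (the content of $\lambda_{\mathbbm M}(c_1/\overline\sigma,c_2)$), with the peeling over scales going through because these budgets improve as $\rho$ grows. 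Reconciling (i)--(iii) with each other, and with the constant-chasing needed to secure $c_1>4c_2$, is where the real work lies; estimate (E1), by contrast, is the by-now-standard small-ball method.
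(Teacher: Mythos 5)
Theorem~\ref{thm:main-LugMen} is quoted in this paper as the main result of \cite{lugosi2019risk} and is \emph{not} proved here; there is therefore no internal proof to compare against. Your plan does reproduce the analytic skeleton of the median-of-means tournament argument from that reference: the decomposition $\mathcal{L}_f=\psi^2+2\psi\xi$ with $\psi=f-f^\ast$, $\xi=f^\ast(X)-Y$; the role of convexity via $\E\psi\xi\ge 0$; the block structure with $n\asymp N\min\{1,r^2/\overline{\sigma}^2\}$ blocks, which is exactly what converts a variance bound depending on $\overline{\sigma}^2$ into the stated confidence exponent; the small-ball lower bound on the quadratic part, controlled by $r_{\mathbbm{Q}}^{\rm rad}$ and the packing fixed point $\lambda_{\mathbbm{Q}}$; and the multiplier-process bound controlled by $r_{\mathbbm{M}}^{\rm rad}$ and $\lambda_{\mathbbm{M}}$. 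You are also right that \cite{lugosi2019risk} runs a pairwise home/away tournament rather than your minimax-over-medians selector; the two differ in bookkeeping, not in the substance of the two uniform estimates (E1)--(E2).

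One detail in (E1) does not survive as written: the contraction step relies on ``the square is $2\rho$-Lipschitz on $[-\rho,\rho]$,'' but under only $L_4$--$L_2$ norm equivalence $\psi(X_i)$ is unbounded, so that truncation is not available. The standard repair, and the one used in the small-ball method behind \cite{lugosi2019risk} and \cite{mendelson2017local}, is to establish for each net point $\psi$ that a constant fraction of sample points satisfy $|\psi(X_i)|\gtrsim\rho$, and to control the residual oscillation over a net cell through the Rademacher average of the \emph{linear} process $\psi\mapsto\frac1N\sum\eps_i\psi(X_i)$ --- not through a global Lipschitz contraction of the square. With that correction your outline matches the intended argument of the cited theorem at the right level of detail for a plan; a full verification would still have to carry out the multi-scale peeling and constant-chasing you flag under ``the main obstacle.''
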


An alternative formulation of Theorem \ref{thm:main-LugMen} (that can be found in \cite{mendelson2019unrestricted}) is  the corresponding sample complexity estimate; it uses the fact that $r_{\rm rad}^*$ decreases in $N$.

\begin{corollary} \label{cor:main-LugMen}
Given $\eps$ and $\delta$, let $N_0(\eps)$ be the smallest integer for which $r_{\rm rad}^* \leq c_3 \sqrt{\eps}$. If
$$
N \geq N_0(\eps) + c_4\left(\frac{\overline{\sigma}^2}{\eps}+1\right) \log\left(\frac{2}{\delta}\right),
$$
then with probability at least $1-\delta$,
$$
\|\widehat{f}-f^*\|_{L_2} \leq  \sqrt{\varepsilon} \ \ {\rm  and } \ \
\E \bigl({\mathcal L}_{\widehat{f}} \,| \,{\mathcal D}_N \bigr) \leq \varepsilon.
$$
\end{corollary}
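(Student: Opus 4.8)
The plan is to deduce Corollary \ref{cor:main-LugMen} directly from Theorem \ref{thm:main-LugMen} by choosing the free parameter $r$ as a suitable multiple of $\sqrt{\eps}$ and by splitting the sample size: one part of $N$ is used to guarantee that the complexity fixed point $r_{\rm rad}^*$ has become small, and the remaining part is used to upgrade the confidence. Throughout we may assume that the constant $c$ of Theorem \ref{thm:main-LugMen} satisfies $c \ge 1$, replacing it by $\max\{c,1\}$ if necessary.

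First I would fix $c_3$ small enough, say $c_3 = 1/(2c)$, and set $r := 2c_3\sqrt{\eps} = \sqrt{\eps}/c$, so that $cr = \sqrt{\eps}$ and $cr^2 = \eps/c \le \eps$. Since $r_{\rm rad}^*$ is non-increasing in $N$ and, by the definition of $N_0(\eps)$, one has $r_{\rm rad}^* \le c_3\sqrt{\eps}$ already at sample size $N_0(\eps)$, it follows that for every $N \ge N_0(\eps)$ the fixed point at sample size $N$ satisfies $r_{\rm rad}^* \le c_3\sqrt{\eps}$, hence $r = 2c_3\sqrt{\eps} \ge 2r_{\rm rad}^*$. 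Thus Theorem \ref{thm:main-LugMen} applies with this choice of $r$ and yields a procedure producing $\widehat f \in F$ such that, with probability at least $1 - \exp\!\bigl(-c_0 N \min\{1,\, r^2/\overline{\sigma}^2\}\bigr)$,
$$
\|\widehat f - f^*\|_{L_2} \le cr \le \sqrt{\eps}
\qquad\text{and}\qquad
\E\bigl({\mathcal L}_{\widehat f}\,|\,{\mathcal D}_N\bigr) \le cr^2 \le \eps .
$$
Note that the procedure only needs $L$, $\overline{\sigma}$ and the value $r = 2c_3\sqrt{\eps}$, all available once $\eps$ is fixed; the quantity $N_0(\eps)$ enters only the analysis. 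We also tacitly use that $N_0(\eps)$ is finite---this holds because each of the four fixed points defining $r_{\rm rad}^*$ tends to $0$ as $N \to \infty$, and is in any case implicit in the formulation of the corollary.

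It remains to choose $c_4$ so that the hypothesis $N \ge N_0(\eps) + c_4\bigl(\overline{\sigma}^2/\eps + 1\bigr)\log(2/\delta)$ forces the displayed failure probability below $\delta$. Since $N_0(\eps) \ge 0$, this hypothesis already implies $N \ge c_4\bigl(\overline{\sigma}^2/\eps + 1\bigr)\log(2/\delta)$. Substituting $r^2 = 4c_3^2\eps$, I would then bound the exponent by splitting into two cases: if $\overline{\sigma}^2 \le 4c_3^2\eps$ then $\min\{1, r^2/\overline{\sigma}^2\} = 1$ and $\overline{\sigma}^2/\eps + 1 \ge 1$; if $\overline{\sigma}^2 > 4c_3^2\eps$ then $\min\{1, r^2/\overline{\sigma}^2\} = 4c_3^2\eps/\overline{\sigma}^2$ and $\bigl(\overline{\sigma}^2/\eps + 1\bigr)\cdot 4c_3^2\eps/\overline{\sigma}^2 \ge 4c_3^2$. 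In both cases $N\min\{1, r^2/\overline{\sigma}^2\} \ge c_4\min\{1, 4c_3^2\}\log(2/\delta)$, so choosing $c_4$ with $c_0 c_4 \min\{1,4c_3^2\} \ge 1$ makes the exponent at least $\log(2/\delta)$ and the failure probability at most $\delta/2 < \delta$. Combining this with the two displayed estimates proves the corollary, with $c_3$ and $c_4$ depending only on $c$ and $c_0$, hence only on $L$.

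The deduction is essentially book-keeping rather than a new idea: the only points requiring a little care are the consistent choice of $c_3$ and $c_4$ in terms of $c$ and $c_0$, the elementary case analysis needed to absorb the $\min$ in the exponent, and the (standard) remark that $N_0(\eps)$ is well defined. The one genuinely used structural input is the monotonicity of $r_{\rm rad}^*$ in $N$, which is already recorded in the statement.
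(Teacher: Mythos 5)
Your deduction is correct and matches what the paper intends: the paper gives no proof of this corollary beyond citing \cite{mendelson2019unrestricted} and noting that it "uses the fact that $r_{\rm rad}^*$ decreases in $N$," which is exactly the structural input your argument rests on. The choice $r = 2c_3\sqrt{\eps}$ with $c_3 = 1/(2c)$, the monotonicity argument ensuring $r \ge 2r_{\rm rad}^*$ for $N \ge N_0(\eps)$, and the two-case bound absorbing the $\min\{1, r^2/\overline{\sigma}^2\}$ in the exponent to push the failure probability below $\delta$, are all sound and constitute the expected derivation from Theorem~\ref{thm:main-LugMen}.
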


\begin{remark}
It is important to stress that the most natural learning procedure, \emph{empirical risk minimization (ERM)}---that selects a function in $F$ that minimizes the empirical risk $\frac{1}{N}\sum_{i=1}^N (f(X_i)-Y_i)^2$---, is far from optimal.
In heavy tailed situations its performance does not come close to that of the \emph{tournament procedure} introduced in the proof of Theorem \ref{thm:main-LugMen}.
\end{remark}

\begin{example}[Linear regression in $\R^d$]
 \label{sec:reg-1}
Let $T \subset \R^d$ be convex (and compact) and set $F_T=\{\inr{\cdot,t} : t \in T\}$ to be the class of linear functionals associated with $T$.
Let $X$ be a random vector that satisfies $L_4-L_2$ norm equivalence, and  set $w$ to be a centred random variable that is independent of $X$ and for which $\|w\|_{L_4} \leq L \|w\|_{L_2}$. 
Let $z_0 \in \R^d$ and put $Y=\inr{X,z_0}+w$.

It is standard to verify that the triplet $(F,X,Y)$ satisfies the first two components in Assumption \ref{ass:LugMen} with a constant that depends only on $L$. 
As a result, Theorem \ref{thm:main-LugMen} holds, and the Rademacher complexities are the fixed points  associated with the functions $\Phi_{N}(r)$ and $\Phi_{N,\xi}(r)$.

Thanks to the linear structure this problem has, the Rademacher complexities have a (seemingly) simpler description. Note that $T-T$ is a convex, centrally symmetric set, and therefore so is $(T-T) \cap rD$.
As such, $(T-T) \cap rD$ is the  unit ball of a norm on $\R^d$ (or on a subspace of $\R^d$), and denote that norm by $\| \cdot \|_{K_r}$. Thus, the Rademacher complexities are determined by the behaviour of  the functions
\begin{equation} \label{eq:phi-random-vector}
 \Phi_{N}(r) = \E \left\|\frac{1}{\sqrt{N}} \sum_{i=1}^N \eps_i X_i \right\|_{K_r}  \ \ {\rm and} \ \
 \Phi_{N,\xi}(r) = \E\left\|\frac{1}{\sqrt{N}} \sum_{i=1}^N \eps_i \xi_i X_i \right\|_{K_r}.
\end{equation}
Unfortunately, controlling the expected norms of the two random vectors in terms of the geometry of the set $K_r$ is  highly nontrivial and far from understood---other than in rather special situations.
These are far more complex objects than, say, $\E \|G\|_{K_r}$, where $G$ is the gaussian random vector in $\R^d$ that has the same covariance as $X$.
Indeed, while there are sophisticated methods that lead to a complete characterization of $\E \|G\|_{K_r}$ purely in terms of the geometry of the set $K_r$ (see, for example \cite{talagrand2022upper}), there are no known characterizations for \eqref{eq:phi-random-vector}.
At the same time, it follows from the central limit theorem  that $\Phi_{N}(r) \to \E \|G\|_{K_r}$ as $N\to\infty$.
\end{example}

\subsection{If not the Rademacher complexities, then what?}

The one obvious, but at the same time highly unlikely candidate that  one might consider as a potential replacement for the Rademacher complexities are the fixed points associated with the limiting gaussian process.
In other words, instead of using parameters that capture the  oscillations of empirical/multiplier processes, one may consider the behaviour of the oscillations of the gaussian process indexed by $F$. Of course, at this point there is no hint that these oscillations have something to do with the learning problem, and the idea of considering them as an option is speculative at best.

\begin{definition}
Let $\sigma^2=\inf_{f \in F} \E (f(X)-Y)^2$, set $\{G_h : h \in F-F\}$ to be the gaussian process indexed by $F-F$  whose covariance is endowed by $L_2(\mu)$, and put
\begin{equation} \label{eq:gaussian-fixed}
\begin{split}
r_{\mathbb{Q}}(\kappa) &= \inf\left\{ r>0 : \E \sup_{h \in (F-F) \cap rD} G_h \leq \kappa \sqrt{N} r \right\},\\
r_{\mathbb{M}}(\kappa) &= \inf\left\{ r>0 : \E \sup_{h \in (F-F) \cap rD} G_h \leq \kappa \sqrt{N} \frac{r^2}{\sigma}\right\}.
\end{split}
\end{equation}
\end{definition}

Using that $F-F$ is star-shaped around $0$, it is evident that if $r \geq r_{\mathbb{Q}}(\kappa)$ then
\[
\E \sup_{h \in (F-F) \cap rD} G_h \leq \kappa \sqrt{N} r,
\]
 and if $r \leq r_{\mathbb{Q}}(\kappa)$ then the reverse inequality holds. A similar phenomenon is true for $r_{\mathbb{M}}(\kappa)$. 

\vspace{0.5em}
With the two new parameters, the notions of `low noise' and `high noise' have to be adjusted. 
The meaning of low noise is that $\sigma \leq r_{\mathbb{Q}}(\kappa)$, while high noise means that $\sigma \geq r_{\mathbb{Q}}(\kappa)$---which implies that $\sigma \geq r_{\mathbb{M}}(\kappa)$ as well. Indeed, if $\sigma \geq r_{\mathbb{Q}}(\kappa)$ then
\[
\E \sup_{h \in (F-F) \cap \sigma D} G_h \leq \kappa \sqrt{N} \sigma = \kappa \sqrt{N} \frac{\sigma^2}{\sigma},
\]
and therefore $r=\sigma$ is an `eligible candidate' in the definition of $r_{\mathbb{M}}(\kappa)$.

\vspace{0.5em}

A conjecture that for a well chosen $\kappa$,
$$
r^\ast(\kappa) = \max\{r_{\mathbb{Q}}(\kappa), r_{\mathbb{M}}(\kappa)\}
$$
might bound the optimal tradeoff / sample complexity seems widely optimistic. Firstly,  the empirical oscillation functions $\Phi_N(r)$ and $\Phi_{N,\xi}(r)$ tend to be much bigger than their gaussian counterpart---especially in heavy tailed situation (see Appendix \ref{sec:app.rad} for an example).
Secondly, a bound that is based solely on the behaviour of the limiting gaussian process does not `see' all the fluctuations caused by the random sample.
Such a bound would be the same for all distributions that endow the same $L_2$ structure, and would coincide with the light tailed estimate even if the problem is heavy tailed.

Taking all that into account, such a conjecture seems be too good to be true.

Despite that, our main result, presented in the next section, is that essentially, \linebreak $\max\{r_{\mathbb{Q}}(\kappa), r_{\mathbb{M}}(\kappa)\}$ is an upper bound on the optimal tradeoff  even in heavy tailed problems, once we assume that the learner is given a little more information on the problem.

\subsection{The main result}

The `little more information' that one requires is an \emph{isomorphic distance oracle} on $F \cup \{0\}$.

\begin{assumption}
\label{ass:distance.oracle}
The learner has access to a symmetric\footnote{That is, $d(f,h)=d(h,f)$.} function $d$ that satisfies, for every $f,h\in F\cup\{0\}$,
\[\frac{1}{\eta} \|f-h\|_{L_2} \leq d(f,h)\leq \eta \|f-h\|_{L_2},\]
for some $\eta\geq 1$.
\end{assumption}

Assumption \ref{ass:distance.oracle} implies that the learner has access to \emph{crude} information on $L_2$ distances between functions in $F$, as well as their $L_2$ norms.
Crucially, the leaner does not have more information on $Y$ or on functions that depend on $f-Y$, nor is there a need for accurate information on $X$.
A situation in which Assumption \ref{ass:distance.oracle} is natural is presented in Example \ref{ex:lin.reg.known.Sigma}.

\begin{remark}
As we explain in what follows, thanks to the distance oracle and to the \emph{majorizing measures theorem}, the learner can identify $\E \sup_{h \in (F-F) \cap rD} G_h$ for every $r>0$ (up to multiplicative constants that depend on $\eta$). As a result, the learner knows the value of $r_{\mathbb{Q}}(\kappa)$---again, up to multiplicative constants in $\eta$. The situation regarding $r_{\mathbb{M}}(\kappa)$ is more subtle, as its definition is based on the unknown noise level $\sigma$. Thus, a preliminary step in the learning procedure that will be used here, is obtaining a rough estimate on the noise level.
\end{remark}

We also assume that the first two parts of Assumption \ref{ass:LugMen} hold: that $\|f-h\|_{L_4}\leq L \|f-h\|_{L_2}$ for every $f,h\in F\cup\{0\}$, and that $\|f-Y\|_{L_4}\leq L \|f-Y\|_{L_2}$ for every $f\in F$.

\vspace{0.5em}

The first ingredient we need is an `isomorphic' estimate on the noise level $\sigma^2=\E (f^\ast(X)-Y)^2$.

\begin{lemma}
\label{lem:noise.estimate}
	There are constants $c_1,c_2,c_3$ that depend only on $\eta$ and $L$ such that the following holds.
	If $r$ satisfies  that $\log \mathcal{M}(F,c_1rD) \leq c_2 N$, then there is a procedure that, upon receiving the data ${\mathcal D}_N=(X_i,Y_i)_{i=1}^N$, the distance oracle $d$, and the values of $L,\eta,r$, selects $\widehat\sigma$ which satisfies that with probability at least $1 - 2\exp\left( - c_3 N \right)$,
\[ \widehat{\sigma}^2 \leq 2 \max\{ \sigma^2, r^2\}, \quad\text{and if $\sigma>r$, then }  \widehat{\sigma}^2 \geq \frac{1}{2}\sigma^2 .\]
\end{lemma}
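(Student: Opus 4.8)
The plan is to estimate $\sigma^2 = \E(f^\ast(X)-Y)^2$ by a robust (median-of-means style) empirical proxy, even though $f^\ast$ itself is unknown. The key observation is that $\sigma^2 = \inf_{f \in F}\E(f(X)-Y)^2$, so if we could evaluate each $\E(f(X)-Y)^2$ robustly and take the infimum over $F$, we would be done; the difficulty is that a uniform control over all of $F$ is exactly what forces Rademacher-type complexities back into the picture, which we want to avoid here. The way around this is that we only need an \emph{isomorphic} estimate --- correct up to a multiplicative constant and only when $\sigma > r$ --- so a crude discretization of $F$ at scale comparable to $r$ suffices. Concretely, I would first use the distance oracle $d$ to construct a maximal $c_1 r$-separated subset $F_r \subset F$; by Assumption \ref{ass:distance.oracle} this is (up to the factor $\eta$) a genuine $L_2$-net, and its cardinality is $\mathcal{M}(F, c_1 r D)$ up to constants, so $\log|F_r| \leq c_2 N$ by hypothesis.

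Next, for each $f \in F_r$ I would form a median-of-means estimator $\widehat{m}(f)$ of $\E(f(X)-Y)^2$ from the sample: split $\{1,\dots,N\}$ into $\lfloor c N \rfloor$ blocks, compute the block averages of $(f(X_i)-Y_i)^2$, and take their median. The classical one-dimensional median-of-means analysis (combined with the $L_4$--$L_2$ norm equivalence $\|f-Y\|_{L_4}\leq L\|f-Y\|_{L_2}$ from Assumption \ref{ass:LugMen}, which bounds the variance of $(f(X_i)-Y_i)^2$ by $L^4 (\E(f(X)-Y)^2)^2$) gives that for each fixed $f$, with probability at least $1 - 2\exp(-c'N)$,
\[
\tfrac{1}{2}\E(f(X)-Y)^2 - C r^2 \;\leq\; \widehat{m}(f) \;\leq\; 2\,\E(f(X)-Y)^2 + C r^2;
\]
the additive $r^2$ slack is harmless because we only care about the regime $\sigma \geq r$. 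A union bound over $F_r$ costs a factor $\exp(\log|F_r|) \leq \exp(c_2 N)$, which is absorbed provided $c_2$ is chosen small relative to $c'$, yielding a single event of probability $1 - 2\exp(-c_3 N)$ on which all these two-sided bounds hold simultaneously. On that event I then set $\widehat\sigma^2 = \min_{f \in F_r} \widehat{m}(f)$ (truncated below by, say, $r^2$, if one wants the clean statement).

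It remains to translate the two-sided bounds on the $\widehat{m}(f)$ into the claimed two-sided bounds on $\widehat\sigma^2$. The upper bound is the easy direction: pick $f \in F_r$ within $L_2$-distance $\lesssim \eta\, c_1 r$ of $f^\ast$; expanding $(f(X)-Y)^2 = (f^\ast(X)-Y)^2 + 2(f^\ast(X)-Y)(f(X)-f^\ast(X)) + (f(X)-f^\ast(X))^2$, taking expectations, and using $\E(f^\ast(X)-Y)(f^\ast(X)-f(X)) \leq 0$ (the first-order optimality of $f^\ast$ in the convex class $F$) together with $\|f - f^\ast\|_{L_2} \lesssim r$, gives $\E(f(X)-Y)^2 \leq \sigma^2 + C r^2$, hence $\widehat\sigma^2 \leq \widehat m(f) \leq 2\sigma^2 + Cr^2 \leq 2\max\{\sigma^2, r^2\}$ after adjusting constants (here $c_1$ must be chosen small enough that $C r^2$ is comfortably below $\sigma^2$ whenever the problem is not in the low-noise regime). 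For the lower bound, fix any $f \in F_r$; since every $f \in F$ satisfies $\E(f(X)-Y)^2 \geq \sigma^2$, the lower half of the median-of-means bound gives $\widehat m(f) \geq \tfrac12 \sigma^2 - C r^2 \geq \tfrac14 \sigma^2$ when $\sigma > r$ and $C$ is absorbed; taking the minimum over $F_r$ preserves this, so $\widehat\sigma^2 \geq \tfrac14\sigma^2$ (rescaling the block count trades the $\tfrac14$ for $\tfrac12$).

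The main obstacle, and the place where care is genuinely needed, is the interplay between the three small constants: $c_1$ controls the net radius (it must be small so that the net point near $f^\ast$ contributes only a lower-order $r^2$ term, yet it also controls $|F_r|$), $c_2$ must be small enough that the union bound over $F_r$ does not overwhelm the per-estimator deviation probability $\exp(-c'N)$, and $c_3$ is then whatever survives. One has to verify that these can be chosen consistently — in particular that the number of blocks in the median-of-means estimator can be taken proportional to $N$ (giving $\exp(-c'N)$ tails) while still leaving each block large enough for the $L_4$--$L_2$ variance bound to kick in; this is where the assumption $\log\mathcal{M}(F,c_1 r D)\leq c_2 N$ is used in an essential way. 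Everything else is a routine combination of the standard scalar median-of-means lemma with the convexity (first-order optimality) inequality \eqref{eq:convexity condition} and the norm-equivalence hypothesis.
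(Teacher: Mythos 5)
Your strategy matches the paper's: use the distance oracle to construct a net of $F$ at scale $\sim r$ (so $\log|F_r|\lesssim N$ by the entropy hypothesis), run an optimal scalar mean estimator on $(u_j-Y)^2$ at each net point $u_j$, union bound, and set $\widehat\sigma^2$ to the minimum of the resulting estimates. The paper packages exactly this argument as the crude risk oracle (Theorem \ref{thm:crude-risk-estimate.intro}) and then obtains Lemma \ref{lem:noise.estimate} as a two-line corollary; you inline the argument, which is fine and mathematically equivalent.

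However, there is a sign error in your discretization bound in the upper direction. Expanding around $f^*$,
\[
\E(f-Y)^2 \;=\; \sigma^2 \;+\; 2\,\E\bigl[(f^*-Y)(f-f^*)\bigr] \;+\; \|f-f^*\|_{L_2}^2 ,
\]
and first-order optimality of $f^*$ in the convex class $F$ gives $\E[(f^*-Y)(f-f^*)]\ge 0$, not $\le 0$. That inequality makes the cross term nonnegative, so it \emph{increases} $\E(f-Y)^2$ and cannot be used to deduce $\E(f-Y)^2\le\sigma^2+Cr^2$; it is precisely what yields the lower bound \eqref{eq:convexity condition}. For the upper bound you instead need an \emph{upper} estimate on the cross term, and that comes from Cauchy--Schwarz: $2\E[(f^*-Y)(f-f^*)]\le 2\sigma\|f-f^*\|_{L_2}\lesssim \sigma r$, and then $\sigma r\le \epsilon\sigma^2+\tfrac{1}{4\epsilon}r^2$. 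This is exactly how the paper's proof of Theorem \ref{thm:crude-risk-estimate.intro} handles the corresponding step, by writing $(u_j-Y)^2-(f-Y)^2=(u_j-f)^2+2(u_j-f)(f-Y)$ and applying Cauchy--Schwarz together with Young's inequality. With this correction the proposal is sound and essentially identical to the paper's argument.
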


\begin{remark}
We will show in what follows that the role of the entropic condition $\log \mathcal{M}(F,c_1rD) \leq c_2 N$ is minimal---see Remark \ref{rem:r.almost.implies.lambda}.
\end{remark}

Lemma \ref{lem:noise.estimate} gives a \emph{data dependent} way of controlling $r^\ast$:  it is straightforward  to verify that
$$
\min \left\{1,\frac{r}{\sigma} \right\}\sim\min \left\{1,\frac{r}{\widehat{\sigma}} \right\},
$$ 
and thus,
\[
\widehat{r}^{\, \ast}(\kappa) =  \inf\left\{ r>0 : \E \sup_{h \in (F-F) \cap rD} G_h \leq \kappa \sqrt{N} r \min\left\{1, \frac{r}{ \widehat{\sigma} }\right\} \right\}
\]
is equivalent to $r^\ast=\max\{ r_{\mathbb{Q}}, r_{\mathbb{M}}\}$.

The last ingredient we need is an entropic fixed point.
Let
\begin{align}
\label{eq:def.lambda.our}
\lambda^\ast(\kappa) = \inf\left\{ r>0 : \log \mathcal{M}(F, rD)
\leq \kappa  N \min\left\{1, \frac{r^2}{\sigma^2} \right\} \right\}.
\end{align}
Again, replacing $\sigma$ by $\widehat\sigma$  in the definition of $\lambda^\ast(\kappa)$ results in an equivalent condition.
Also note that the condition that $\log \mathcal{M}(F, c_1rD)\leq c_2 N$ appearing in Lemma \ref{lem:noise.estimate} is satisfied if $r> \frac{1}{c_1} \lambda^\ast(c_2)$.

With all the ingredients set in place,  let us formulate our main result.

\begin{tcolorbox}
\begin{theorem} \label{thm:main}
There are constants $c_0,c_1,c_2$ that depend only on $L$ and $\eta$ for which the following holds.
If $r> c_0 \max\{ r^\ast(c_1), \lambda^\ast(c_1)\}$, then there exists a procedure that, based on the data ${\mathcal D}_N=(X_i,Y_i)_{i=1}^N$, the distance oracle $d$, and the values of $L,\eta$ and $r$, selects a function $\widehat{f} \in F$ which satisfies that with probability at least
\[1 - 2\exp\left( - c_2 N \min\left\{1,\frac{r^2}{\sigma^2}\right\}\right), \]

\begin{align*}
\|\widehat{f} - f^\ast\|_{L_2} &\leq r \quad\text{and}\quad
\E \bigl({\mathcal L}_{\widehat{f}} \,| \,{\mathcal D}_N \bigr) \leq  r^2.
\end{align*}
\end{theorem}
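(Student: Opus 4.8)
The plan is to adapt the tournament procedure of \cite{lugosi2019risk} in two ways: the individual matches are decided by a \emph{median-of-means} rule, and the uniform control of the match outcomes --- which in \cite{lugosi2019risk} produces the Rademacher complexities --- is carried out via Talagrand's \emph{generic chaining}, which instead produces the $\gamma_2$-functional of $F-F$, i.e.\ $\E\sup_{h\in(F-F)\cap sD}G_h$. As a preliminary step we run the estimator of Lemma~\ref{lem:noise.estimate} --- its entropic hypothesis is guaranteed since $r>c_0\lambda^\ast(c_1)$, see Remark~\ref{rem:r.almost.implies.lambda} --- and condition on the resulting event, on which $\widehat\sigma\asymp\max\{\sigma,r\}$ and $\widehat\sigma\asymp\sigma$ whenever $\sigma>r$; consequently $\widehat r^{\,\ast}\asymp r^\ast$ and the $\widehat\sigma$-version of $\lambda^\ast$ is equivalent to $\lambda^\ast$, so that from now on we may work with $\widehat\sigma$. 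We also use that, by the distance oracle together with polarisation and the majorizing measures theorem, the learner knows $\E\sup_{h\in(F-F)\cap sD}G_h$ for every $s>0$ up to a factor depending only on $\eta$, and hence knows $r_{\mathbb Q}$ and (after the noise estimation) $r_{\mathbb M}$ up to constants.

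\emph{The procedure.} Set $\rho\asymp r\min\{1,r/\widehat\sigma\}$ and $k\asymp N\min\{1,r^2/\widehat\sigma^2\}$. Using $d$, the learner selects $F_0\subset F$ that is a $\rho$-net of $F$ with respect to $d$ --- hence an $\eta\rho$-net with respect to $\|\cdot\|_{L_2}$ --- whose cardinality satisfies $\log|F_0|\le c\,k$; that this is compatible with the choice of $r$ relative to $\lambda^\ast$ and $r^\ast$ is one of the delicate points. The sample is partitioned into blocks $I_1,\dots,I_k$ of size $N/k$; for ordered pairs $f,g\in F_0$ and each block one forms $\mathcal B_{f,g,j}=|I_j|^{-1}\sum_{i\in I_j}\bigl((f(X_i)-Y_i)^2-(g(X_i)-Y_i)^2\bigr)$, declares that $f$ \emph{beats} $g$ if $\mathcal B_{f,g,j}\le\kappa' r^2$ for a strict majority of indices $j$, and returns any $\widehat f\in F_0$ that beats every $g\in F_0$.

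\emph{The crux.} Writing $f-Y=(f-f^\ast)+\xi$ with $\xi=f^\ast-Y$ gives $(f-Y)^2-(g-Y)^2=(f-f^\ast)^2-(g-f^\ast)^2+2\xi(f-g)$, so $\E\mathcal B_{f,g,j}=\E{\mathcal L}_f-\E{\mathcal L}_g$ and $\mathcal B_{f,g,j}$ splits into a quadratic and a multiplier block-average. The goal is to show that, with probability at least $1-2\exp(-c_2 N\min\{1,r^2/\sigma^2\})$, for \emph{every} pair $f,g\in F_0$ the number of blocks with $|\mathcal B_{f,g,j}-(\E{\mathcal L}_f-\E{\mathcal L}_g)|>\tfrac12\kappa' r^2$ is below $k/4$. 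For a fixed pair this is Chebyshev within one block, using the $L_4$--$L_2$ equivalence to bound $\|(f-f^\ast)^2-(g-f^\ast)^2\|_{L_2}\lesssim_L\|f-f^\ast\|_{L_2}^2+\|g-f^\ast\|_{L_2}^2$ and $\|\xi(f-g)\|_{L_2}\lesssim_L\widehat\sigma\|f-g\|_{L_2}$, together with $|I_j|=N/k$. The uniform statement is where the generic chaining replaces the Rademacher complexities: for each block one discards the few pairs for which a suitably truncated version of the summands is atypical, and on the complement the increments (in $f-g$) of the truncated quadratic and multiplier processes become sub-gaussian with respect to $\|\cdot\|_{L_2}$ --- with parameter controlled by the local diameter and by $\widehat\sigma$, respectively --- so that their suprema over $(F-F)\cap sD$ are, by generic chaining and majorizing measures, of order $\E\sup_{h\in(F-F)\cap sD}G_h$, uniformly in the scale $s$. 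Feeding in the defining inequalities of $r_{\mathbb Q}$ and $r_{\mathbb M}$ and the hypothesis $r\gtrsim r^\ast$ turns these chaining bounds into the required control on the number of ``large'' blocks, while the genuinely atypical blocks are absorbed by a Bernstein-type estimate and a union bound over $F_0$ that is affordable because $\log|F_0|\le c\,k$.

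\emph{Conclusion.} On this event let $\pi(f^\ast)\in F_0$ be nearest to $f^\ast$; then $\E{\mathcal L}_{\pi(f^\ast)}=\|\pi(f^\ast)-f^\ast\|_{L_2}^2+2\E[(\pi(f^\ast)-f^\ast)\xi]\le(\eta\rho)^2+2\eta\rho\,\widehat\sigma\lesssim r^2$ by the choice of $\rho$. Hence, for a suitable $\kappa'$, $\pi(f^\ast)$ beats every $g\in F_0$, since $\E{\mathcal L}_{\pi(f^\ast)}-\E{\mathcal L}_g\le\E{\mathcal L}_{\pi(f^\ast)}<\kappa' r^2$ (using $\E{\mathcal L}_g\ge0$); so a champion exists. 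Conversely, if some $f\in F_0$ has $\E{\mathcal L}_f>Cr^2$ for a large enough absolute constant $C$, then $\E{\mathcal L}_f-\E{\mathcal L}_{\pi(f^\ast)}>\tfrac32\kappa' r^2$, so a strict majority of blocks have $\mathcal B_{f,\pi(f^\ast),j}>\kappa' r^2$ and $f$ does not beat $\pi(f^\ast)$; thus every champion --- in particular $\widehat f$ --- has $\E\bigl({\mathcal L}_{\widehat f}\mid{\mathcal D}_N\bigr)=\E{\mathcal L}_g\big|_{g=\widehat f}\le Cr^2$. The convexity condition \eqref{eq:convexity condition} then yields $\|\widehat f-f^\ast\|_{L_2}^2\le\E{\mathcal L}_{\widehat f}\lesssim r^2$, and replacing $r$ by $r/\sqrt C$ and relabelling $c_0,c_1,c_2$ gives the stated bounds. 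The main obstacle is the crux together with the bookkeeping around it: engineering the truncation so that the median-of-means match process has sub-gaussian $L_2$-increments while its conditional mean barely moves, and tracking the scales (including the interplay between $\lambda^\ast$, the fine net $F_0$, and $k$) so that precisely $\E\sup_h G_h$ --- and not the larger oscillations $\Phi_N,\Phi_{N,\xi}$ --- emerges, uniformly over $F_0$ and at confidence $\exp\bigl(-c\,N\min\{1,r^2/\sigma^2\}\bigr)$.
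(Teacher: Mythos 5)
Your high-level skeleton matches the paper's (a preliminary noise estimate via Lemma~\ref{lem:noise.estimate}; a tournament over a net; chaining intended to replace the Rademacher complexities; the final step using \eqref{eq:convexity condition}). But the mechanism you propose at the crux is genuinely different from the paper's, and I do not believe it can deliver the gaussian bound.

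You play the tournament with blockwise median-of-means on $\mathcal{B}_{f,g,j}$, and you control it uniformly by ``truncating the summands so that the increments of the truncated quadratic and multiplier block-processes become subgaussian in $\|\cdot\|_{L_2}$,'' then chaining. That is essentially the Lugosi--Mendelson tournament analysis, and the technique of truncation plus chaining on the resulting block-average process is precisely what produces the empirical oscillation functions $\Phi_N$, $\Phi_{N,\xi}$ (i.e.\ $r^{\rm rad}_{\mathbbm{Q}}$, $r^{\rm rad}_{\mathbbm{M}}$) rather than $\E\sup G_h$. Under an $L_4$--$L_2$ assumption the truncated process does \emph{not} have $L_2$-subgaussian increments with the gaussian variance proxy: the subgaussian parameter you get scales with the truncation level, and optimizing the truncation level against the mean shift recovers a Rademacher-type quantity, not $\gamma_2(F-F,L_2)$. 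The whole point of the paper---and the content of Appendix~\ref{sec:app.rad}---is that these two are genuinely different in heavy-tailed situations, so the step you flag as ``the crux'' is a real gap, not a bookkeeping hurdle.

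The paper's resolution is qualitatively different: instead of running median-of-means on the full excess-loss difference and trying to chain afterwards, it builds the mean estimator \emph{out of} the chain. The fine oracle decomposes $(f-Y)^2-(h-Y)^2=u(w+2(v-Y))$ with $u=w=f-h$, $v=h$, and $v_j$ a net point near $v$, splitting into two product terms $uw$, $u(v-v_j)$ and a multiplier term $u(v_j-Y)$. Each of these is estimated via the procedures of Theorems~\ref{thm:multiplier} and~\ref{thm:main-product}: one fixes an admissible sequence $(H_s)$ of the index set (available to the learner through the distance oracle and the majorizing measures theorem) and applies a single-variable optimal mean estimator $\psi_{\delta_s}$, with $\delta_s=2\exp(-\alpha 2^s)$, \emph{to each chaining link} $\xi\Delta_s h$ or $\pi_{s+1}f\pi_{s+1}h-\pi_s f\pi_s h$ on the \emph{whole} sample. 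The subgaussian increment condition that chaining needs is then supplied not by truncation but by the guarantee \eqref{eq:optimal-mean} of $\psi_\delta$, whose error is $\sigma_Z\sqrt{\log(2/\delta)/N}$ with $\sigma_Z$ the genuine $L_2$-norm of the link; $L_4$--$L_2$ equivalence then turns that into $\|\Delta_s h\|_{L_2}$, and summing the chain gives $\gamma_2\sim\E\sup G_h$. This per-link estimation is the idea your proposal is missing, and it is what makes the gaussian bound---rather than the Rademacher one---appear. A secondary difference: the paper's crude risk oracle restricts the tournament to $\widehat V=\{f:\Psi_{\mathbbm{C}}(f)\le 4\sigma_\ast^2\}$ so that the multipliers $v_j-Y$ entering the multiplier estimator have controlled $L_4$-norm, which your scheme would also need once you leave the analysis-only use of $\xi=f^\ast-Y$.
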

\end{tcolorbox}

Thanks to Theorem \ref{thm:main}, it is straightforward to derive the corresponding sample complexity bounds:

\begin{corollary} \label{cor:main}
There are constants $c_1,c_2,c_3$ that depend only on $L$ and $\eta$ for which the following holds.
Given $\eps$ and $\delta$, let $N_0(\eps)$ be the smallest integer for which 
\[\max\{ r^\ast(c_1), \lambda^\ast(c_1)\}\leq c_2\sqrt{\eps}.\] 
If
\begin{align}
\label{eq:sample.complexity.N}
N \geq N_0(\eps) + c_3\left(\frac{\sigma^2}{\eps}+1\right) \log\left(\frac{2}{\delta}\right),
\end{align}
then with probability at least $1-\delta$,
$$
\|\widehat{f}-f^*\|_{L_2} \leq  \sqrt{\varepsilon} \ \ {\rm  and } \ \
\E \bigl({\mathcal L}_{\widehat{f}} \,| \,{\mathcal D}_N \bigr) \leq \varepsilon.
$$
\end{corollary}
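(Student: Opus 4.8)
The proof is a routine deduction from Theorem~\ref{thm:main}: one runs the procedure of that theorem at the single scale $r=\sqrt{\eps}$ and converts the confidence bound it provides into a lower bound on $N$. Write $c_0^\star,c_1^\star,c_2^\star$ for the constants furnished by Theorem~\ref{thm:main} and set $c_1=c_1^\star$, $c_2=1/(2c_0^\star)$ and $c_3=1/c_2^\star$ (any smaller $c_2$ and any larger $c_3$ work equally well); since $c_0^\star,c_1^\star,c_2^\star$ depend only on $L$ and $\eta$, so do $c_1,c_2,c_3$.

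First I would record that the complexity parameters are non-increasing in the sample size. The fixed points $r_{\mathbb{Q}}(\kappa)$ and $r_{\mathbb{M}}(\kappa)$ are the infima of the sets $\{r>0:\E\sup_{h\in(F-F)\cap rD}G_h\le\kappa\sqrt N\,r\}$ and $\{r>0:\E\sup_{h\in(F-F)\cap rD}G_h\le\kappa\sqrt N\,r^2/\sigma\}$, and $\lambda^\ast(\kappa)$ is the infimum of $\{r>0:\log\mathcal{M}(F,rD)\le\kappa N\min\{1,r^2/\sigma^2\}\}$; in each case the right-hand side is non-decreasing in $N$, so increasing $N$ only enlarges the sets and hence cannot increase their infima. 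Therefore $r^\ast(c_1^\star)=\max\{r_{\mathbb{Q}}(c_1^\star),r_{\mathbb{M}}(c_1^\star)\}$ and $\lambda^\ast(c_1^\star)$ are non-increasing in $N$, so for every $N\ge N_0(\eps)$ — with $N_0(\eps)$ the smallest integer at which $\max\{r^\ast(c_1^\star),\lambda^\ast(c_1^\star)\}\le c_2\sqrt\eps$ — this inequality still holds at the actual sample size. (If no such integer exists the hypothesis $N\ge N_0(\eps)+\dots$ is vacuous.) Since $c_2=1/(2c_0^\star)$, it follows that $c_0^\star\max\{r^\ast(c_1^\star),\lambda^\ast(c_1^\star)\}\le\sqrt\eps/2<\sqrt\eps$, so $r=\sqrt\eps$ is admissible in Theorem~\ref{thm:main}. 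Invoking that theorem with this $r$ (feeding the procedure $\sqrt\eps$ along with $L$, $\eta$ and the oracle $d$) yields $\widehat f\in F$ such that, with probability at least $1-2\exp(-c_2^\star N\min\{1,\eps/\sigma^2\})$, one has $\|\widehat f-f^\ast\|_{L_2}\le\sqrt\eps$ and $\E(\mathcal{L}_{\widehat f}\mid\mathcal D_N)\le\eps$, which are exactly the claimed bounds.

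It remains to check that the failure probability is at most $\delta$. Using $\min\{1,\eps/\sigma^2\}=1/\max\{1,\sigma^2/\eps\}\ge 1/(1+\sigma^2/\eps)$ together with the hypothesis $N\ge N_0(\eps)+c_3(\sigma^2/\eps+1)\log(2/\delta)\ge c_3(\sigma^2/\eps+1)\log(2/\delta)$ and the choice $c_3=1/c_2^\star$, one obtains $c_2^\star N\min\{1,\eps/\sigma^2\}\ge\log(2/\delta)$, whence $2\exp(-c_2^\star N\min\{1,\eps/\sigma^2\})\le\delta$. That completes the argument. No step presents a genuine obstacle; the only points requiring (minor) care are the monotonicity of $r^\ast(\cdot)$ and $\lambda^\ast(\cdot)$ in $N$ and the calibration of $c_2$ that keeps the threshold $c_0^\star\max\{r^\ast,\lambda^\ast\}$ strictly below $\sqrt\eps$.
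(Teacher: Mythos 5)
Your proposal is correct and follows exactly the route the paper intends (and which it uses for the analogous Corollary~\ref{cor:main-LugMen}): monotonicity of the fixed points in $N$, applying Theorem~\ref{thm:main} at the scale $r=\sqrt{\eps}$ after calibrating the constants, and converting the exponential confidence bound into the stated sample-size condition via $\min\{1,\eps/\sigma^2\}\ge 1/(1+\sigma^2/\eps)$. The constant bookkeeping and the monotonicity check are both handled correctly, so there is nothing to add.
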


And as in the fixed point conditions before, $\sigma$ in \eqref{eq:sample.complexity.N} may be replaced with $\widehat{\sigma}$.

\begin{example}[Linear regression in $\R^d$, revised]
\label{ex:lin.reg.known.Sigma}
Just as in Example \ref{sec:reg-1} and using its notation, set $X\in \R^d$ to be the random vector, let $T\subset\R^d$ to be a convex and compact set, put  $F=\{\inr{\cdot,t} : t\in T\}$ and consider $Y=\inr{X,z_0}+w$. As noted in Example \ref{sec:reg-1}, the triplet $(F,X,Y)$ satisfies the required norm equivalence assumptions.
	
Assumption \ref{ass:distance.oracle} is satisfied if there is some additional crude information on the covariance matrix of $X$ (denoted by $\Sigma_X$ in what follows). For example, it suffices that the learner has access to a positive semi-definite matrix $A$  for which
	\[ \eta^{-2} \Sigma_X \preceq A \preceq \eta^2 \Sigma_X,\]
	and in that case one may set
\[
d^2(u,v)=\inr{A(u-v),u-v}.
\]
There are many natural linear regression problems  where the covariance matrix $\Sigma_X$ is known, and then Assumption \ref{ass:distance.oracle} is trivially satisfied with $\eta=1$.
\end{example}

\begin{remark} \label{rem:r.almost.implies.lambda}
The entropy condition that $ r\geq \lambda^\ast(\kappa)$ should not be alarming. 
Indeed, set $d_F={\rm diam}(F,L_2)$, and consider the high noise regime---when $\sigma \geq r_{\mathbb{Q}}(\kappa)$. 
As noted previously, in that case, $\sigma \geq r_{\mathbb{M}}(\kappa)$, and therefore, $\sigma \geq r^\ast(\kappa)$. 
Clearly, if $r^\ast(\kappa)\geq 2 d_F$, then $\lambda^\ast(\kappa) \leq r^\ast(\kappa)$ because $\mathcal{M}(F,2d_F)=1$; and otherwise one can show that if $\sigma$ is a little larger than $r^\ast$, for example, if
$
r^\ast(c\kappa) \sqrt{\log(\frac{4d_F}{r^\ast(c\kappa)})} \leq \sigma,
$
then
\[
\lambda^\ast(\kappa) \leq r^\ast(c\kappa)  \sqrt{\log\left(\frac{4d_F}{r^\ast(c\kappa)}\right)}.
\]
Moreover, for any $\sigma$, we have that $\lambda^\ast(\kappa) \leq \tilde{r}^{\,\ast}(c\kappa)$, where
	\begin{align*}
	 \tilde{r}^{\,\ast}(\kappa) = \inf\left\{ r>0 : \E \sup_{h \in (F-F) \cap rD} G_h
	\leq \kappa \sqrt{N} \theta(r) r  \min\left\{1, \frac{r}{\sigma}\right\}  \right\}
\end{align*}
	and $\theta(r) = 1/\max\{1, \sqrt{ \log(2d_F/r) } \}$.
	The proof of both facts can be found in Appendix \ref{sec:app.entropy}.
\end{remark}

It should be stressed that the procedure we use in the proof of Theorem \ref{thm:main} is not intuitive, nor is it feasible from a computational perspective.
Still, the fact that such a procedure even exists---and that the accuracy/confidence tradeoff can be controlled using the limiting gaussian process even in heavy tailed situations---, is a complete surprise.

\vspace{0.5em}

The proof of Theorem \ref{thm:main} is based  on an unorthodox chaining argument, combined with an arbitrary optimal mean estimation procedure for real-valued random variables.
We believe that this type of argument will have far reaching implications (see for example, the results in \cite{bartl2025uniform}).
The unorthodox chaining argument is described in Section \ref{sec:chaining}.

\subsection*{Remarks and notation}

The procedures we use are functions $\Psi$ that are given the values $(X_i,Y_i)_{i=1}^N$ as data. In addition, they have access to functions in $F$ (or in $F-F=\{f-h : f,h \in F  \}$), and therefore can `see' the values $(f(X_i))_{i=1}^N$ as well.
At times, the procedures also have access to the wanted confidence parameter $\delta$ and to the distance oracle $d$.
However, the procedures never know the identities of the target function $Y$ or of the risk minimizer $f^*$.

\begin{tcolorbox}
In what follows and with a slight abuse of notation, if we write, say, for $f \in F$ and $\xi=h(X)-Y$, ``$\Psi(f,\xi)$" it means that $\Psi$ ``sees" the values $(f(X_i))_{i=1}^N$, $(h(X_i))_{i=1}^N$, and $(h(X_i)-Y_i)_{i=1}^N$.
It also knows the identities of $f$ and $h$, but not that of $Y$ or of $f^*$.
All that is possible given the data the learner has access to.
\end{tcolorbox}

Throughout this article, $c, c_0, c_1$ denote strictly positive absolute constants that may change their value in each appearance. If a constant $c$ depends  on  a parameter  $\alpha$, that  is  denoted  by  $c = c(\alpha)$.
We  write  $\alpha \lesssim \beta$  if  there is an absolute constant $c$ for which  $\alpha\leq c\beta$ and $\alpha\sim\beta$ if both  $\alpha\lesssim \beta$ and  $\beta\lesssim \alpha$.

With another minor abuse of notation we write $f$ instead of $f(X)$ and $f-Y$ instead of $f(X)-Y$. Also, we may assume without loss of generality that $0 \in F$. If not, fix an arbitrary $f_0 \in F$ and consider the shifted class $F^\prime = \{f -f_0 : f \in F\}$ and the shifted target $Y-f_0$; the given data will now be $(X_i,Y_i-f_0(X_i))_{i=1}^N$. Clearly, $F-F=F^\prime-F^\prime$, and $\sup_{f^\prime \in F^\prime} \|f^\prime\|_{L_2} \sim {\rm diam}(F,L_2)$.

\section{Proof of Theorem \ref{thm:main}}

The learning procedure used in the proof of Theorem \ref{thm:main} has  several components that are described here. 
Each one of the components relies on the theorem's assumptions.

\vspace{0.5em}
The first part of the procedure is a pre-processing step that is not random. It will be presented in Section \ref{sec:chaining.admissible.seq}.

The two main components of the procedure are a \emph{crude risk oracle}, denoted by $\Psi_{\mathbbm{C}}$, and a \emph{fine risk oracle}, denoted by $\Psi_\mathcal{L}$. Let $(X_i,Y_i)_{i=1}^{2N}$ be the given sample and set $r \geq c_0 \max\{r^\ast(c_1), \lambda^\ast(c_1)\}$ for constants $c_0$ and $c_1$ that depend only on $L$ and $\eta$ and are specified in what follows.

The crude risk oracle receives as input the first half of the sample,  $(X_i,Y_i)_{i=1}^N$ and the parameter $r$, and for every $f \in F$ returns $\Psi_{\mathbbm{C}}(f)$. Set
$$
\widehat{\sigma}^2=\inf_{f \in F} \Psi_{\mathbbm{C}}(f), \ \  \ \sigma_\ast=\max\{\widehat{\sigma},r\},
$$
and define
$$
\widehat{V}=\left\{ f \in F: \Psi_{\mathbbm{C}}(f) \leq 4\sigma_\ast^2 \right\}.
$$
Creating the set $\widehat{V}$ is the crude oracle's main task.
The idea behind  $\widehat{V}$ is to exclude functions in $F$ that have an unreasonably large risk, while at the same time ensuring that $f^* \in \widehat{V}$.

The second component---the fine risk oracle, receives as input the second half of the sample $(X_i,Y_i)_{i=N+1}^{2N}$, the set $\widehat{V}$, and the values $r$ and $\sigma_\ast$. 
For every ordered pair $(f,h) \in \widehat{V} \times \widehat{V}$ it returns $\Psi_\mathcal{L}(f,h)$. 
Based on those values, a tournament is played between functions in $\widehat{V}$, in a way that will be clarified.

The procedure selects any winner of this tournament.

\vspace{0.5em}
Let us describe the performance of the two components. 

\subsection{The crude risk oracle}

The first component of the learning procedure is a crude risk oracle.
While its performance is far from the wanted one, it allows the learner to exclude functions that have large risks.

\begin{tcolorbox}
\begin{theorem} \label{thm:crude-risk-estimate.intro}
There are constants $c_1,c_2,c_3$ that depend only on $L$ and $\eta$ such that the following holds. Set $r$ that satisfies $\log \mathcal{M}(F,c_1 rD)\leq c_2 N$.
Then with probability at least
\[1-2\exp\left( -c_3 N \right),  \]
for every $f \in F$,
\begin{align}
\label{eq:value.oracle.intro}
\left| \Psi_{\mathbbm{C}}(f) - \E (f(X)-Y)^2 \right|
\leq  \frac{1}{2} \max\left\{ r^2,  \E(f(X)-Y)^2 \right\}.
\end{align}
\end{theorem}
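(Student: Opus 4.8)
The plan is to reduce the uniform estimate \eqref{eq:value.oracle.intro} to a single-scale net computation combined with an off-the-shelf sub-gaussian (``median-of-means'') mean estimator for real-valued random variables. The feature that makes one scale enough is that the oracle is only required to be additively accurate at level $r^2$ for functions of small risk, and relatively accurate to within $\tfrac12$ otherwise; thus an $L_2$-net of $F$ at scale $\sim r$ --- which is exactly what the entropic hypothesis $\log\mathcal M(F,c_1rD)\leq c_2N$ makes available with only $e^{c_2N}$ points --- suffices, and no chaining is needed.

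First I would construct the oracle. Using only the distance oracle $d$, fix a maximal $c_1r$-separated subset $F_0\subset F$ with respect to $d$; by Assumption \ref{ass:distance.oracle} it is $(c_1r/\eta)$-separated in $L_2$, so $|F_0|\leq\mathcal M\bigl(F,(c_1/\eta)rD\bigr)\leq e^{c_2N}$ after absorbing $\eta$ into the constant $c_1$, and by maximality every $f\in F$ admits $\pi(f)\in F_0$ with $\|f-\pi(f)\|_{L_2}<\eta c_1r=:\rho$. Let $\mathrm{ME}$ denote any optimal mean estimator: for $N$ i.i.d.\ copies of a square-integrable $Z$ and confidence $\delta$ with $N\gtrsim\log(1/\delta)$, it outputs $\widehat\mu$ with $|\widehat\mu-\E Z|\leq c\sqrt{\V[Z]\log(1/\delta)/N}$ on an event of probability $\geq1-\delta$. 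I would then let $\widehat\mu_g$ be the output of $\mathrm{ME}$ applied to $\bigl((g(X_i)-Y_i)^2\bigr)_{i=1}^N$ with confidence $e^{-\kappa N}$, for each $g\in F_0$, and define $\Psi_{\mathbbm C}(f):=\widehat\mu_{\pi(f)}$. This uses only the sample $(X_i,Y_i)_{i\le N}$, the oracle $d$, and the values $g(X_i)$ for $g\in F$, all of which the learner is allowed to see.

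The core of the argument is a two-step error bound. On the net: for $g\in F_0$ the variable $Z_g=(g(X)-Y)^2$ has mean $\|g-Y\|_{L_2}^2$ and, by the $L_4$--$L_2$ equivalence of $\{f-Y:f\in F\}$, $\sqrt{\V[Z_g]}\leq\|g-Y\|_{L_4}^2\leq L^2\|g-Y\|_{L_2}^2$; hence with probability $\geq1-e^{-\kappa N}$ one has $|\widehat\mu_g-\|g-Y\|_{L_2}^2|\leq cL^2\sqrt{\kappa}\,\|g-Y\|_{L_2}^2\leq\tfrac18\|g-Y\|_{L_2}^2$ provided $\kappa$ is small in terms of $L$. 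A union bound over the $\leq e^{c_2N}$ points of $F_0$ costs $e^{c_2N}\cdot e^{-\kappa N}$, so I would take $c_2=\kappa/2$, which gives the asserted probability with $c_3=\kappa/2$. For a general $f\in F$, with $g=\pi(f)$, $a=\|f-Y\|_{L_2}$, $b=\|g-Y\|_{L_2}$ and $|a-b|\leq\rho$, on that event
\[
|\Psi_{\mathbbm C}(f)-a^2| \ \leq\ |\widehat\mu_g-b^2|+|b^2-a^2| \ \leq\ \tfrac18 b^2+\rho(a+b) \ \leq\ \tfrac38\,a^2+\tfrac{37}{4}\,\eta^2c_1^2\,r^2,
\]
using $b\leq a+\rho$, $b^2\leq2a^2+2\rho^2$ and $2a\rho\leq\tfrac18 a^2+8\rho^2$; choosing $c_1$ small enough (in terms of $\eta$) that $\tfrac{37}{4}\eta^2c_1^2\leq\tfrac18$ bounds the right-hand side by $\tfrac12\max\{r^2,a^2\}$, which is precisely \eqref{eq:value.oracle.intro}.

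The only genuine obstacle is reconciling the constants: the confidence $\delta_0=e^{-\kappa N}$ must be exponentially small to beat the union bound over the $e^{c_2N}$ net points, yet a sub-gaussian estimator's error scales like $\sqrt{\log(1/\delta_0)/N}=\sqrt{\kappa}$ times the standard deviation of $(g-Y)^2$, which is of order $L^2\|g-Y\|_{L_2}^2$; this forces $c_2<\kappa\lesssim L^{-4}$ (and $\kappa$ small enough that $N\gtrsim\log(1/\delta_0)$), all of which is harmless because $c_1,c_2,c_3$ are permitted to depend on $L$ and $\eta$. The two remaining points --- that the net is built from the computable metric $d$ rather than the unknown $L_2$ metric, and that $\Psi_{\mathbbm C}$ never uses $Y$ or $f^\ast$ beyond what the setup allows (it only ever evaluates $g(X_i)-Y_i$ for $g\in F$) --- are immediate from the construction.
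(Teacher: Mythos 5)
Your proposal is correct and follows essentially the same path as the paper: take a single-scale net of $F$ at radius $\sim r$ built from the distance oracle $d$ (the entropic hypothesis caps its cardinality at $e^{cN}$), run an optimal sub-gaussian mean estimator on $(g-Y)^2$ at each net point with confidence tuned so the union bound survives, and then transfer to arbitrary $f\in F$ by a short algebraic comparison of $\E(f-Y)^2$ and $\E(\pi(f)-Y)^2$. The only cosmetic difference is in that last comparison step, where the paper expands $(u_j-Y)^2-(f-Y)^2=(u_j-f)^2+2(u_j-f)(f-Y)$ and applies Cauchy--Schwarz, while you use $|b^2-a^2|\le|b-a|(a+b)\le\rho(a+b)$ directly — both are elementary and give the same conclusion with the same dependence on $L$ and $\eta$.
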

\end{tcolorbox}

Note that the  entropy condition  $\log \mathcal{M}(F,c_1rD) \leq c_2 N$ in the theorem is satisfied when $r\geq \frac{1}{c_1} \lambda^\ast(c_2)$, which we may assume in the context of Theorem \ref{thm:main}.

The function $\Psi_{\mathbb{C}}$ is a mean estimation procedure for functions in the class $\{ (f-Y)^2 : f\in F\}$, but as a mean estimator it is rather inaccurate: if $\E (f-Y)^2\geq r^2$ then $\Psi_{\mathbb{C}}(f)$ is an `isomorphic' guess of the mean, satisfying that 
\[
\frac{1}{2}  \E (f-Y)^2 \leq \Phi_{\mathbb{C}}(f) \leq \frac{3}{2} \E (f-Y)^2.
\]
And if $\E (f-Y)^2 <r^2$ then $\Psi_{\mathbb{C}}(f)$  only exhibits that fact without providing a better guess of what $\E (f-Y)^2$ really is.

The proof of Theorem \ref{thm:crude-risk-estimate.intro} is presented in Section \ref{sec:crude}.

\vspace{0.5em}

With  Theorem \ref{thm:crude-risk-estimate.intro} at hand, we can immediately obtain the isomorphic estimator for the noise level stated in Lemma \ref{lem:noise.estimate}.

\begin{proof}[Proof of Lemma \ref{lem:noise.estimate}]:
Recall that $\sigma^2=\inf_{f\in F} \E (f-Y)^2$ and that
\[
\widehat{\sigma}^2 = \inf_{f \in F} \Psi_{\mathbbm{C}}(f).
\]
Note that in the high probability event in which \eqref{eq:value.oracle.intro} holds,
$\widehat{\sigma}^2\leq 2\max\{r^2,  \sigma^2 \}$, and if $\sigma>r$ then $\widehat{\sigma}^2\geq \frac{1}{2}\sigma^2$.
Hence, if $\sigma>r$ then $\frac{1}{2}\sigma^2 \leq \widehat{\sigma}^2\leq 2\sigma^2$, and if $\sigma<r$ then $\widehat{\sigma}^2\leq 2r^2$.
\end{proof}

In particular, for $\sigma_\ast=\max\{r, \widehat{\sigma}\}$ it is evident that
\[
\min\left\{ 1, \frac{r^2}{\sigma_\ast^2}\right\}
\sim \min\left\{1, \frac{r^2}{\sigma^2}\right\}.
\]
Recall that
\[  \widehat{V}
=
\left\{f \in F: \Psi_{\mathbbm{C}}(f) \leq  4  \sigma_\ast^2  \right\}
\]
is an output of the crude risk oracle. We will show the following:

\begin{corollary} \label{cor:crude-min.intro}
In the event in which \eqref{eq:value.oracle.intro} holds, $f^* \in  \widehat{V}$ and for every $f \in  \widehat{V}$,
\[
\E(f(X)-Y)^2 \leq 16  \max\left\{ r^2 ,  \sigma^2 \right\}.
\]
\end{corollary}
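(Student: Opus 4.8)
The plan is to deduce everything from Theorem~\ref{thm:crude-risk-estimate.intro} and Lemma~\ref{lem:noise.estimate}, working throughout inside the event in which \eqref{eq:value.oracle.intro} holds. The one structural fact I would isolate first is that $\sigma_\ast^2$ and $\max\{r^2,\sigma^2\}$ are equivalent up to a factor of $2$. Indeed, since $\widehat\sigma^2\leq 2\max\{r^2,\sigma^2\}$ by Lemma~\ref{lem:noise.estimate}, we get $\sigma_\ast^2=\max\{\widehat\sigma^2,r^2\}\leq 2\max\{r^2,\sigma^2\}$; conversely, if $\sigma\leq r$ then $\max\{r^2,\sigma^2\}=r^2\leq\sigma_\ast^2$, while if $\sigma>r$ then Lemma~\ref{lem:noise.estimate} gives $\widehat\sigma^2\geq\tfrac12\sigma^2$, so $\sigma_\ast^2\geq\tfrac12\sigma^2=\tfrac12\max\{r^2,\sigma^2\}$. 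Hence $\tfrac12\max\{r^2,\sigma^2\}\leq\sigma_\ast^2\leq 2\max\{r^2,\sigma^2\}$.

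For the claim $f^\ast\in\widehat V$, I would simply apply \eqref{eq:value.oracle.intro} to $f=f^\ast$, using $\E(f^\ast-Y)^2=\sigma^2$, to obtain $\Psi_{\mathbbm{C}}(f^\ast)\leq \sigma^2+\tfrac12\max\{r^2,\sigma^2\}\leq\tfrac32\max\{r^2,\sigma^2\}$. Combining this with the lower bound $\max\{r^2,\sigma^2\}\leq 2\sigma_\ast^2$ yields $\Psi_{\mathbbm{C}}(f^\ast)\leq 3\sigma_\ast^2\leq 4\sigma_\ast^2$, which is exactly the membership condition defining $\widehat V$.

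For the risk bound on an arbitrary $f\in\widehat V$, I would use that $\Psi_{\mathbbm{C}}(f)\leq 4\sigma_\ast^2$ by the definition of $\widehat V$, and split on the size of $\E(f-Y)^2$. If $\E(f-Y)^2\leq r^2$ the claimed bound is immediate. Otherwise $\max\{r^2,\E(f-Y)^2\}=\E(f-Y)^2$, so \eqref{eq:value.oracle.intro} gives $\E(f-Y)^2\leq\Psi_{\mathbbm{C}}(f)+\tfrac12\E(f-Y)^2$, that is $\E(f-Y)^2\leq 2\Psi_{\mathbbm{C}}(f)\leq 8\sigma_\ast^2$; then the upper bound $\sigma_\ast^2\leq 2\max\{r^2,\sigma^2\}$ finishes the argument with constant $16$.

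There is no genuine obstacle here: the corollary is a direct bookkeeping consequence of the two preceding results, and the only point requiring a moment's care is keeping track of the directions of the inequalities coming out of Lemma~\ref{lem:noise.estimate}—in particular that the lower bound $\widehat\sigma^2\geq\tfrac12\sigma^2$ is available only when $\sigma>r$—which is precisely what the equivalence $\sigma_\ast^2\sim\max\{r^2,\sigma^2\}$ packages cleanly so that it can be used uniformly.
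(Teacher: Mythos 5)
Your argument is correct and follows essentially the same route as the paper's: in both cases one combines the two-sided estimate on $\widehat{\sigma}^2$ (which you import from Lemma~\ref{lem:noise.estimate}, whereas the paper re-derives it from \eqref{eq:value.oracle.intro} inside the proof) with the defining inequality $\Psi_{\mathbbm{C}}(f)\leq 4\sigma_\ast^2$ for $\widehat V$ and the isomorphic estimate on $\E(f-Y)^2$. Your organizational choice to first package the equivalence $\tfrac12\max\{r^2,\sigma^2\}\leq\sigma_\ast^2\leq 2\max\{r^2,\sigma^2\}$ and then use it uniformly is a clean way to avoid the case split on $\sigma\gtrless r$ that the paper carries through; the content and the constant $16$ come out identically.
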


The proof of Corollary \ref{cor:crude-min.intro} follows from a straightforward application of Theorem \ref{thm:crude-risk-estimate.intro}, and is outlined in Section \ref{sec:crude}.

\subsection{The fine risk oracle}

The second main component is an accurate mean estimation procedure for functions of the form $(f(X)-Y)^2-(h(X)-Y)^2$.

\begin{tcolorbox}
\begin{theorem}
\label{thm:fine.risk.intro}
	There are constants $c_1$ and $c_2$ that depend only on $L$ and $\eta$ for which the following holds.
	For $r>\max\{ r^\ast(c_1),\lambda^\ast(c_1)\}$, there is a mean estimation procedure  $\Psi_\mathcal{L}$ for which the following hold.
	With probability at least
	\[1-2\exp\left( - c_2   N \min\left\{ 1, \frac{r^2}{\sigma^2} \right\} \right),\]
	for every $f,h\in F$ that satisfy
	\begin{align}
	\label{eq:fine.intro1}
	\E(h(X)-Y)^2 \leq 16  \max\left\{ r^2, \sigma^2\right\},
	\end{align}
	we have that
	\begin{align}
	\label{eq:fine.intro2}
	\left| \Psi_\mathcal{L}(f,h) - \left( \E(f-Y)^2 - \E (h-Y)^2 \right)
	\right| \leq \frac{1}{2} \max\left\{ r^2 , \|f-h\|_{L_2}^2 \right\}.
	\end{align}
\end{theorem}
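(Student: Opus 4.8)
The plan is to view $\Psi_\mathcal{L}(f,h)$ as a single real-valued mean estimation procedure applied to the random variables $\mathcal{L}_{f,h}(X_i,Y_i)=(f(X_i)-Y_i)^2-(h(X_i)-Y_i)^2$, each of which the learner can compute, and to make the estimate uniform over the admissible pairs by an unorthodox generic chaining over $F$ in the $L_2$-metric. Write $\theta=\min\{1,r/\sigma\}$, so the target confidence is $1-2\exp(-c_2N\theta^2)$ and the crucial arithmetic identity $\theta\max\{r,\sigma\}\sim r$ holds. The one quantitative input every mean-estimation step will use is the elementary identity $\mathcal{L}_{a,b}-\mathcal{L}_{a',b'}=(a-a')(a+a'-2Y)-(b-b')(b+b'-2Y)$, combined with the $L_4$--$L_2$ norm equivalence and \eqref{eq:fine.intro1} (which forces $\|h-Y\|_{L_2}\lesssim\max\{r,\sigma\}$ and hence $\|f-Y\|_{L_2}\lesssim\|f-h\|_{L_2}+\max\{r,\sigma\}$): it yields $\|\mathcal{L}_{a,b}-\mathcal{L}_{a',b'}\|_{L_2}\lesssim(\|a-a'\|_{L_2}+\|b-b'\|_{L_2})(\|f-h\|_{L_2}+\max\{r,\sigma\})$ whenever $a,a'$ lie within $\|f-h\|_{L_2}+\max\{r,\sigma\}$ of $f$ and $b,b'$ of $h$, and in particular (taking $a'=b'=0\in F$) $\|\mathcal{L}_{a,b}\|_{L_2}\lesssim(\|f-h\|_{L_2}+\max\{r,\sigma\})^2$.

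Using the distance oracle $d$ and the majorizing measures theorem, in the non-random pre-processing step the learner builds an admissible sequence $(F_k)_{k\ge0}$ of $F$ with $|F_k|\le 2^{2^k}$ that is $\gamma_2$-optimal up to a factor depending on $\eta$; write $\pi_k$ for the nearest-point maps and $\rho_k(\cdot)$ for the radii. Fix the \emph{split level} $k_\theta$ with $2^{k_\theta}\sim N\theta^2$ and a truncation level $k_{\max}$ with $2^{k_{\max}}\sim N$; the hypothesis $r>c_0\lambda^\ast(c_1)$ gives $\log\mathcal M(F,c_1rD)\le c_1N\theta^2$, so $F_{k_\theta}$ can be taken to be (at least) an $\sim r$-net of $F$, i.e.\ $\rho_{k_\theta}(\cdot)\lesssim r$. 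Now set
\[
\Psi_\mathcal{L}(f,h)=M_{k_\theta}\!\bigl(\mathcal{L}_{\pi_{k_\theta}(f),\pi_{k_\theta}(h)}\bigr)+\sum_{k_\theta<k\le k_{\max}}M_k\!\bigl(\mathcal{L}_{\pi_k(f),\pi_k(h)}-\mathcal{L}_{\pi_{k-1}(f),\pi_{k-1}(h)}\bigr),
\]
where each $M_k$ is an arbitrary optimal real-valued mean estimator applied to the corresponding (computable) sample values: the coarse one is run at confidence $1-\exp(-CN\theta^2)$ (it ranges over the $\le 2^{2^{k_\theta+1}}$ pairs coming from $F_{k_\theta}$) and the level-$k$ fine one at confidence $1-\exp(-C2^k)$ (there are $\le 2^{2^{k+1}}$ links). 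For a large enough constant $C=C(L,\eta)$ a union bound over all links and all levels places us, with probability at least $1-2\exp(-c_2N\theta^2)$, on an event on which every invoked estimator meets its guarantee $|M(\cdot)-\E(\cdot)|\lesssim\|\cdot\|_{L_2}\sqrt{\log(1/\delta)/N}$ simultaneously for all eligible $(f,h)$.

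On that event, $|\Psi_\mathcal{L}(f,h)-\E\mathcal{L}_{f,h}|$ splits into three contributions. The \emph{coarse} error is, by the displayed bound on $\|\mathcal{L}_{a,b}\|_{L_2}$ with $a=\pi_{k_\theta}(f),b=\pi_{k_\theta}(h)$ and $\sqrt{\log(1/\delta)/N}\sim\theta$, at most $\lesssim\theta(\|f-h\|_{L_2}+\max\{r,\sigma\})^2\lesssim\theta\|f-h\|_{L_2}^2+r\|f-h\|_{L_2}+r^2\lesssim\max\{r^2,\|f-h\|_{L_2}^2\}$, using $\theta\max\{r,\sigma\}\sim r$ twice; because it is a single estimate it is immune to any accumulation over levels. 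The \emph{fine} error is $\lesssim\frac{\|f-h\|_{L_2}+\max\{r,\sigma\}}{\sqrt N}\sum_{k_\theta<k\le k_{\max}}\bigl(\rho_k(f)+\rho_{k-1}(f)+\rho_k(h)+\rho_{k-1}(h)\bigr)2^{k/2}$; since the chains from $\pi_{k_\theta}(f)$ to $f$ and from $\pi_{k_\theta}(h)$ to $h$ stay in $L_2$-balls of radius $\lesssim r$, $\gamma_2$-optimality of $(F_k)$ and the star-shape of $F-F$ bound the tail sum by $\E\sup_{u\in(F-F)\cap CrD}G_u$ plus a boundary term $\lesssim 2^{k_\theta/2}\rho_{k_\theta}\lesssim\sqrt N\,\theta r$. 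The boundary term contributes $\lesssim\theta r\max\{r,\sigma\}\sim r^2$; the gaussian term is controlled through $r_{\mathbb Q}(c_1)$ when $\sigma\le r$ (yielding $\lesssim c_1\, r(\|f-h\|_{L_2}+\max\{r,\sigma\})\lesssim c_1\max\{r^2,\|f-h\|_{L_2}^2\}$) and through $r_{\mathbb M}(c_1)$ when $\sigma>r$ (yielding $\lesssim c_1\tfrac{r^2}{\sigma}(\|f-h\|_{L_2}+\sigma)\lesssim c_1\max\{r^2,\|f-h\|_{L_2}^2\}$), using $r>c_0\max\{r_{\mathbb Q}(c_1),r_{\mathbb M}(c_1)\}$; so for $c_1$ small and $c_0$ large this is $\le\frac1{100}\max\{r^2,\|f-h\|_{L_2}^2\}$. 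Finally the \emph{truncation residual} $|\E\mathcal{L}_{\pi_{k_{\max}}(f),\pi_{k_{\max}}(h)}-\E\mathcal{L}_{f,h}|\lesssim\rho_{k_{\max}}(\|f-h\|_{L_2}+\max\{r,\sigma\})$ is made $\le\frac1{100}\max\{r^2,\|f-h\|_{L_2}^2\}$ because $2^{k_{\max}}\sim N$ lets the admissible sequence reach a scale $\lesssim\max\{r^2,\|f-h\|_{L_2}^2\}/(\|f-h\|_{L_2}+\max\{r,\sigma\})$; that the entropy of $F$ at this (slightly sub-$r$) scale is indeed $\lesssim N$ follows from $r>c_0\max\{r^\ast(c_1),\lambda^\ast(c_1)\}$, in the sharpened form of Remark \ref{rem:r.almost.implies.lambda}, the logarithmic loss there being absorbed into $c_0$. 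Summing the three contributions gives \eqref{eq:fine.intro2}.

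I expect the delicate point to be exactly the coarse/fine split. The level $k_\theta\sim\log_2(N\theta^2)$ must be chosen so that the single coarse estimate — which no chaining radii can accumulate against — absorbs precisely the scale-$r\|f-h\|_{L_2}$ fluctuation that a robust estimator produces at confidence $e^{-cN\theta^2}$ (this is where the calibration $\theta\max\{r,\sigma\}\sim r$ is essential), while above $k_\theta$ the chaining no longer needs that confidence boost, so it costs only a genuine generic-chaining term governed by the limiting gaussian process rather than by any empirical or multiplier complexity. Making that generic-chaining term close requires splitting the analysis into the low-noise ($\sigma\le r$, controlled by $r_{\mathbb Q}$) and high-noise ($\sigma>r$, controlled by $r_{\mathbb M}$) regimes, and reconciling the required resolution of the admissible sequence with the available entropy information $\lambda^\ast$ (via Remark \ref{rem:r.almost.implies.lambda}) is the remaining technical hurdle.
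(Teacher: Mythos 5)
Your route is genuinely different from the paper's. The paper does not chain directly on the excess losses $\mathcal{L}_{f,h}$; it first decomposes $\mathcal{L}_{f,h}=u(w+2(v-Y))$ with $u=w=f-h$, $v=h$, builds \emph{separate} uniform estimators $\Psi_{\mathbbm{Q}}$ for products $uw$ and $\Psi_{\mathbbm{M}}$ for multiplier terms $u(v_j-Y)$ on the \emph{bounded} localization $(F-F)\cap r_0 D$, and then globalizes through the explicit scaling $u\mapsto r\tilde u/\alpha(\tilde u)$ with $\alpha(\tilde u)=\max\{r,d(\tilde u,0)\}$. Your single coarse-plus-chain estimator of $\mathcal{L}_{f,h}$ over $F\times F$ is more economical conceptually and sidesteps the three-way decomposition and the rescaling bookkeeping, which is an attractive feature. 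But as written it has three gaps.

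First, and most seriously, you have merged two quantities that the paper keeps separate: the fixed ratio $\min\{1,r/\sigma\}$, and a \emph{tunable} small constant $\theta$ that in the paper multiplies it (see the condition $2^{s_0}\le c_1\theta^2 N\min\{1,r^2/\sigma^2\}$ in Theorem~\ref{thm:mixture} and the later choice $\theta=1/(12c_3)$). You set your $\theta$ equal to $\min\{1,r/\sigma\}$ and take $2^{k_\theta}\sim N\theta^2$, so the coarse estimator's accuracy is $\sim\sqrt{2^{k_\theta}/N}\sim\min\{1,r/\sigma\}$ times the relevant $L_2$ norm, and the boundary term is $2^{k_\theta/2}\rho_{k_\theta}\sim\sqrt N\min\{1,r/\sigma\}\,r$. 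These give $\lesssim\max\{r^2,\|f-h\|_{L_2}^2\}$ with an \emph{absolute} constant, but there is no knob left to make that constant $\le\tfrac12$; and the factor $\tfrac12$ is not cosmetic — the tournament proof of Theorem~\ref{thm:main} uses it against the convexity inequality $\mathcal{L}(f,f^\ast)\ge\|f-f^\ast\|_{L_2}^2$. The fix is to set $2^{k_\theta}\sim\beta^2 N\min\{1,r^2/\sigma^2\}$ for a small $\beta=\beta(L,\eta)$ chosen at the end, and to require the entropy condition $r>\lambda^\ast(c_1)$ with $c_1\lesssim\beta^2$ so that the $r$-net at level $k_\theta$ has the needed cardinality; then both the coarse error and the boundary term carry a factor $\beta$, and $\beta$ can be taken small enough.

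Second, the step ``$\gamma_2$-optimality of $(F_k)$ and the star-shape of $F-F$ bound the tail sum by $\E\sup_{u\in(F-F)\cap CrD}G_u$'' is not a consequence of $\gamma_2$-optimality for $F$ alone. An admissible sequence that is optimal for the global $\gamma_2$ of $F$ need not have near-optimal \emph{partial} sums $\sum_{k>k_\theta}2^{k/2}\|\Delta_k f\|_{L_2}$ restricted to cells of diameter $\lesssim r$. What is needed is a two-stage construction: at level $k_\theta$ use the $\sim r$-net of $F$ that the hypothesis $r>\lambda^\ast(c_1)$ supplies, and then \emph{within} each resulting cell $C$ — which is a subset of a translate of $(F-F)\cap c\,rD$ — build a $\gamma_2$-optimal admissible sequence of $C$ via the distance oracle; shifting indices and taking unions produces the admissible sequence you need, with the partial sum from level $k_\theta$ onwards bounded by $\gamma_2((F-F)\cap c\,rD)$. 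That is, of course, exactly the job that the paper's partition $(V_j)$ with centres $v_j$ performs. So while your high-level intent is right, you should make this construction explicit rather than appealing to $\gamma_2$-optimality.

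Finally a smaller point: the displayed inequality $\|\mathcal{L}_{a,b}\|_{L_2}\lesssim(\|f-h\|_{L_2}+\max\{r,\sigma\})^2$ is not correct — the term $\min\{1,r/\sigma\}\max\{r,\sigma\}^2=r\max\{r,\sigma\}$ that it produces can be $r\sigma\gg r^2$. The right bound, which your factorization does actually give, is $\|\mathcal{L}_{a,b}\|_{L_2}\lesssim L^2(\|f-h\|_{L_2}+r)(\|f-h\|_{L_2}+\max\{r,\sigma\})$: here $a-b$ has $L_2$ norm $\lesssim\|f-h\|_{L_2}+r$ (using $\rho_{k_\theta}\lesssim r$), only the factor $a+b-2Y$ costs the $\max\{r,\sigma\}$. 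Multiplying by $\min\{1,r/\sigma\}$ then gives $\lesssim\max\{r^2,\|f-h\|_{L_2}^2\}$ as you claimed, so your conclusion for the coarse term is right but it does not follow from the displayed bound.
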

\end{tcolorbox}

The proof of Theorem \ref{thm:fine.risk.intro} is presented in Section \ref{sec:fine}.

\vspace{0.5em}

With the main components set in place, the learning procedure is a tournament played between functions in $F$---following a similar path to the one used in \cite{lugosi2019risk} and \cite{mendelson2019unrestricted}.

\subsection{The tournament}

Recall that the symmetric distance oracle $d$ satisfies that for every $f,h\in F\cup \{0\}$,
\begin{align}
\label{eq:distnace.oravle.proof}
\eta^{-1}\|f-h\|_{L_2}\leq d(f,h)\leq \eta \|f-h\|_{L_2}.
\end{align}
The tournament consists of `matches' between functions in $F$.
Every match has a `home function' and a `visitor function' and the result of a match is that one of the sides wins.

\begin{tcolorbox}
\begin{itemize}
\item Only functions that belong to $\widehat{V}$ participate in the tournament.
\item  $h$ wins its home match against $f$ if
\[ \Psi_\mathcal{L}(f,h)
\geq
\begin{cases}
0 &  \text{ if }   d(f,h) \geq \eta  r, \\
-\dfrac{\eta^4}{2} r^2  &  \text{ otherwise}.
\end{cases}
\]
\item  Let $V^\ast$ be the set of functions in $\widehat{V}$ that won all their home matches.
\item The procedure selects any function in $V^\ast$.
\end{itemize}
\end{tcolorbox}

To prove Theorem \ref{thm:main}, it suffices to show that with the wanted probability, $V^\ast\neq\emptyset$ and that any $h\in V^\ast$ satisfies
\[ \|h- f^\ast\|_{L_2}\leq c(\eta) r \quad\text{ and }\quad \E (h-Y)^2 \leq \E (f^\ast-Y)^2 + (c(\eta) r)^2.\]

\begin{proof}[Proof of Theorem \ref{thm:main}]
	Fix a realization of $(X_i,Y_i)_{i=1}^{2N}$ for which the assertions of both Theorem \ref{thm:crude-risk-estimate.intro} and Theorem \ref{thm:fine.risk.intro} hold---using the sample as described previously.
	By Corollary \ref{cor:crude-min.intro}, every function $h\in \widehat{V}$ satisfies \eqref{eq:fine.intro1}, and it follows from \eqref{eq:fine.intro2} that for every $f,h \in \widehat{V}$,
\begin{align}
\label{eq:proof.main.main}
\left| \Psi_\mathcal{L}(f,h) - \left( \E(f-Y)^2 - \E (h-Y)^2 \right)
	\right| \leq \frac{1}{2} \max\left\{ r^2 , \|f-h\|_{L_2}^2 \right\}.
\end{align}
	For every $f,h\in F$, set
	\[ \mathcal{L}(f,h)=   \E(f-Y)^2 - \E (h-Y)^2, \]
	and note that $\mathcal{L}(f,f^\ast)=\E \mathcal{L}_f$.
	Thus, by the convexity of $F$,
	\begin{align}
	\label{eq:FOC}
	\mathcal{L}(f,f^\ast) \geq \|f-f^\ast\|_{L_2}^2 \quad\text{for every } f\in F.
	\end{align}

	To prove that $f^\ast \in V^\ast$, recall that by Corollary \ref{cor:crude-min.intro}, $f^\ast\in \widehat{V}$; therefore $f^\ast$ participates in the tournament.
	Set $f\in  \widehat{V}$ and consider  the home match of $f^\ast$ against $f$.
	If  $d(f,f^\ast)\geq \eta r$, then \eqref{eq:distnace.oravle.proof} implies that  $\|f-f^\ast\|_{L_2}\geq r$, and by \eqref{eq:proof.main.main} and the convexity condition \eqref{eq:FOC},
	\[ \Psi_\mathcal{L}(f,f^\ast)
	\geq \mathcal{L}(f,f^\ast) -  \frac{1}{2} \|f-f^\ast\|_{L_2}^2
	\geq 0;\]
	hence,  $f^\ast$ wins its home match against $f$.
	
	Otherwise, if $d(f,f^\ast)<\eta r$, then  $\|f-f^\ast\|_{L_2}< \eta^2 r$ by \eqref{eq:distnace.oravle.proof}.
	It follows from  \eqref{eq:proof.main.main} that
	\[ \Psi_\mathcal{L}(f,f^\ast)
	\geq \mathcal{L}(f,f^\ast) -  \frac{1}{2} \|f-f^\ast\|_{L_2}^2
	\geq - \frac{\eta^4}{2} r^2,\]
	and again $f^\ast$ wins its home match against $f$.
	
	\vspace{0.5em}
	Now set $h\in V^\ast$ to be any other winner of the tournament.
	In particular $h$ won its home match against $f^\ast$.
	The first observation is that necessarily $d(f^\ast,h)< \eta r$.
	Indeed, if not and $d(f^\ast,h)\geq \eta r$, then $\Psi_{\mathcal{L}}(f^\ast,h)\geq 0$. At the same time, by \eqref{eq:distnace.oravle.proof}, $\|f^\ast-h\|_{L_2}\geq  r$, and thanks to \eqref{eq:proof.main.main} and \eqref{eq:FOC},
	\begin{align*}
	\Psi_\mathcal{L}(f^\ast,h)
	&\leq \mathcal{L}(f^\ast,h)  +  \frac{1}{2} \|f^\ast-h\|_{L_2}^2 \\
	&= - \mathcal{L}(h,f^\ast)  +  \frac{1}{2} \|f^\ast-h\|_{L_2}^2 \\
	&\leq -\|f^\ast -h\|_{L_2} +  \frac{1}{2}\|f^\ast -h\|_{L_2} <0,
	\end{align*}
	which is impossible.
	Thus $d(f^\ast,h) <\eta r$, and in particular $\|f^\ast - h\|_{L_2} < \eta^2 r$.
	Now, since $h$ won its home match against $f^\ast$, $\Psi_{\mathcal{L}}(f^\ast,h)\geq - \frac{1}{2} \eta^4 r^2$. It follows from   \eqref{eq:proof.main.main} that
	\begin{align*}
	\mathcal{L}(f^\ast,h)
	&\geq \Psi_\mathcal{L}(f^\ast,h) - \frac{1}{2} \max\left\{ r^2, \|f^\ast-h\|_{L_2}^2 \right\}  \\
	&\geq -  \eta^4 r^2,
	\end{align*}
implying that $\E (h-Y)^2\leq \E (f^\ast-Y)^2 + \eta^4 r^2$.
\end{proof}

It is clear that at the heart of the proof of Theorem \ref{thm:main} are the two functions $\Psi_{\mathbb{C}}$ and $\Psi_{\mathcal{L}}$.
The rest of this article is devoted to their construction, which is based on a surprising combination: optimal mean estimation procedures for real valued random variables and Talagrand's generic chaining mechanism.
We expect that the impact of this combination will go well beyond the proof of Theorem \ref{thm:main}.

\section{Unorthodox chaining}
\label{sec:chaining}

\emph{Generic Chaining} was introduced by M.~Talagrand for the study of certain stochastic processes.
Let $\{Z_v : v \in V\}$ be a  (centred) random process, and consider the problem of controlling $\E \sup_{v \in V} Z_v$. By creating increasingly fine approximating sets $V_s\subset V$ and setting $\pi_s v$ to be the best approximation (in some appropriate sense) of $v$ in $V_s$, it follows that
\begin{align}
\label{eq:Z.telesopic}
Z_v - Z_{\pi_0 v} = \sum_{s \geq 0} (Z_{\pi_{s+1} v}-Z_{\pi_s v}).
\end{align}
If $|V_0|=1$ then  $\E\sup_{v \in V} Z_v = \E\sup_{v \in V}( Z_v-Z_{\pi_0v})$; therefore it suffices to control the behaviour of each `link' $Z_{\pi_{s+1}v}-Z_{\pi_s v}$ in the chain corresponding to $v$ to obtain a bound on $\sup_{v \in V}( Z_v-Z_{\pi_0v})$.
Intuitively, the number of links in the $s$ stage is at most $|V_s| \cdot |V_{s+1}|$, and that cardinality has to be `balanced' with the probability that the link  $Z_{\pi_{s+1}v}-Z_{\pi_s v}$ is `large'.
As a result, if there is some compatibility between the notion of approximation used in $V$ and that probability, the chaining argument leads to an upper bound on $\E\sup_{v \in V} Z_v$ that is based on metric features of the indexing set $V$.

For a detailed exposition on chaining methods we refer the reader to Talagrand's treasured book \cite{talagrand2022upper}.

\vspace{0.5em}
Even with this somewhat vague description of generic chaining, it is clear that the key to an upper bound on $\E \sup_{v\in V} Z_v$ is the \emph{increment condition}:      an estimate on the probability that $|Z_v-Z_w|$ is much larger than the `distance' between $v$ and $w$.

\begin{definition}
	An admissible sequence of a metric space $(V,\rho)$ is a collection $(V_s)_{s\geq 0}$ of subsets of $V$ that satisfy $|V_s|\leq 2^{2^s}$ and $|V_0|=1$.
	For $v\in V$, denote by $\pi_s v$ a nearest element to $v$ in $V_s$, and set
	\[
\gamma_2(V,\rho) = \inf_{(V_s)_{s\geq 0 }} \sup_{v\in V} \sum_{s\geq 0} 2^{s/2} \rho(\pi_s v,\pi_{s+1} v),
\]
	with the infimum taken over all admissible sequences.
\end{definition}
%

What is a remarkable justification of the generic chaining mechanism is that it leads to a complete metric characterization of the expected supremum of gaussian processes:

\begin{theorem} 
\label{thm:MM}
	There are absolute constants $c_1$ and $c_2$ for which the following hold.
	Let $\{G_v : v\in V\}$ be a centred gaussian process and set $\rho(v,w)=\|G_v-G_{w}\|_{L_2}$.
	Then
	\[  c_1 \gamma_2(V,\rho)
	\leq \E \sup_{v\in V} G_v
	\leq c_2 \gamma_2(V,\rho).\]
\end{theorem}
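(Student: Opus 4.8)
The plan is to prove the two inequalities separately: the upper bound is exactly the generic chaining estimate sketched informally around \eqref{eq:Z.telesopic}, while the lower bound is the deep ``majorizing measures'' direction.

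For the upper bound, I would fix an admissible sequence $(V_s)_{s\geq 0}$ that nearly attains $\gamma_2(V,\rho)$ and apply the telescopic identity \eqref{eq:Z.telesopic} with $Z=G$; since $|V_0|=1$ we have $\E\sup_{v\in V} G_v = \E\sup_{v\in V}(G_v - G_{\pi_0 v})$. Each link $G_{\pi_{s+1}v}-G_{\pi_s v}$ is a centred Gaussian with standard deviation $\rho(\pi_{s+1}v,\pi_s v)$, so it has the subgaussian tail $\P(|G_{\pi_{s+1}v}-G_{\pi_s v}|> u\,\rho(\pi_{s+1}v,\pi_s v))\leq 2e^{-u^2/2}$. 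At level $s$ there are at most $|V_s||V_{s+1}|\leq 2^{2^{s+2}}$ distinct links, so taking $u=u_s\sim 2^{s/2}$ in a union bound produces an event of probability at least $1-2^{-2^s}$ on which every level-$s$ link is at most $c\,2^{s/2}\rho(\pi_s v,\pi_{s+1}v)$. Intersecting over $s$, summing the chain, and taking a supremum over $v$ gives $\sup_{v\in V}(G_v - G_{\pi_0 v})\leq c\sup_{v\in V}\sum_{s\geq 0}2^{s/2}\rho(\pi_s v,\pi_{s+1}v)$ with high probability; a routine tail integration converts this into the same bound in expectation, and optimising over admissible sequences yields $\E\sup_{v\in V} G_v\leq c_2\gamma_2(V,\rho)$.

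For the lower bound I would run Talagrand's partitioning scheme. The two Gaussian-specific inputs are: (i) Sudakov minoration---if $A$ contains an $\epsilon$-separated subset of cardinality $N$ then $\E\sup_{v\in A} G_v\gtrsim \epsilon\sqrt{\log N}$, which comes from Slepian/Gordon comparison; and (ii) a \emph{growth condition} for the functional $F(A)=\E\sup_{v\in A} G_v$, namely that if $A$ contains $a$-separated points $v_1,\dots,v_m$ with disjoint balls $B(v_i,a/4)\subseteq A$, then $F(A)\geq c\,a\sqrt{\log m}+\min_i F(A\cap B(v_i,a/4))$. This growth condition is the heart of the matter: I would prove it by writing $g_i=\sup_{v\in A\cap B(v_i,a/4)}(G_v-G_{v_i})$, using Gaussian concentration to pin each $g_i$ near its mean, and then combining with the Sudakov estimate for $\max_i G_{v_i}$ (the $v_i$ being mutually far apart) to bound $\E\max_i(G_{v_i}+g_i)$ from below by the claimed quantity. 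With the growth condition in hand, the abstract partitioning lemma---recursively splitting $V$ into pieces of geometrically shrinking diameter, with the $\gamma_2$-cost incurred at each level offset by the drop in $F$ dictated by the growth condition---delivers $\gamma_2(V,\rho)\leq C\,F(V)$, i.e.\ $c_1\gamma_2(V,\rho)\leq \E\sup_{v\in V} G_v$.

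The main obstacle is squarely the lower bound, and within it the growth condition of step (ii): the upper bound is essentially bookkeeping with subgaussian increments, whereas manufacturing a good admissible sequence out of the single scalar $\E\sup_v G_v$ forces one to use both Gaussian comparison and Gaussian concentration, fused through the recursive partitioning. A complete treatment is in \cite{talagrand2022upper}, which we follow.
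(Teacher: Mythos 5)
Your sketch is correct and matches the approach the paper itself defers to: the paper offers no proof of Theorem \ref{thm:MM}, simply remarking that the upper bound is the generic chaining estimate and the lower bound is Talagrand's majorizing measures theorem, with citations to \cite{talagrand1987regularity} and \cite{talagrand2022upper}. Your outline of the chaining union bound for the upper direction and the Sudakov-minoration-plus-growth-condition partitioning scheme for the lower direction is an accurate summary of that standard argument.
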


The upper bound follows by filling the gaps in our vague description of generic chaining,  while the lower bound is Talagrand's celebrated majorizing measures theorem \cite{talagrand1987regularity}.

\subsection{A preprocessing step}
\label{sec:chaining.admissible.seq}

The  key ingredient in the proof of the upper estimate in Theorem \ref{thm:MM} is the subgaussian tail estimate.
Crucially, such a tail estimate is simply false when considering other processes---for example---an empirical process in a heavy tailed situation.
What is true, however, is that there are mean estimation procedures that exhibit subgaussian tails even for  general random variables.

For the remainder of this article, denote by $\psi_\delta:\R^N \to \R$ an optimal mean-estimation procedure, i.e., a data dependent procedure that satisfies the following.

\begin{tcolorbox}
\begin{theorem}
\label{thm:optimal.1dim}
There are absolute constants $c_0,c_1$ and for any $\delta\geq 2\exp(-c_0 N)$  and any random variable $Z$ that has a finite mean and variance,
\begin{equation} \label{eq:optimal-mean}
\left| \psi_\delta(Z_1,...,Z_N) - \E Z \right|
 \leq c_1 \sigma_Z \sqrt{\frac{\log(2/\delta)}{N}} \ \ \ {\rm with \ probability \ at \ least \ } 1-\delta,
\end{equation}
where $\sigma_Z^2$ is the variance of $Z$.
\end{theorem}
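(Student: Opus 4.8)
The plan is to use the classical \emph{median-of-means} estimator, which is the canonical construction achieving subgaussian deviations under only a second-moment assumption and, crucially, requires no knowledge of $\sigma_Z$. Given $\delta$, set $k=\lceil 8\log(2/\delta)\rceil$. The hypothesis $\delta \geq 2\exp(-c_0 N)$ says precisely that $\log(2/\delta)\leq c_0 N$, so choosing $c_0$ to be a small enough absolute constant forces $k\leq N/2$; hence the partition of $\{1,\dots,N\}$ into $k$ consecutive blocks $B_1,\dots,B_k$ produces blocks of size $m=\lfloor N/k\rfloor\geq 1$ with $m\geq N/(2k)\gtrsim N/\log(2/\delta)$. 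On block $j$ form the empirical mean $\bar Z_j=\frac1m\sum_{i\in B_j}Z_i$, and define $\psi_\delta(Z_1,\dots,Z_N)$ to be the sample median of $\bar Z_1,\dots,\bar Z_k$. If $\sigma_Z=0$ the claim is trivial, so assume $\sigma_Z>0$.

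First I would record the single-block estimate. Since $\V(\bar Z_j)=\sigma_Z^2/m$, Chebyshev's inequality gives, with $t:=2\sigma_Z/\sqrt m$,
\[
\P\bigl(|\bar Z_j-\E Z|>t\bigr)\leq \frac{\sigma_Z^2}{m t^2}=\frac14.
\]
Set $W_j=\IND\{|\bar Z_j-\E Z|>t\}$; because the blocks are disjoint, the $W_j$ are i.i.d.\ Bernoulli with mean $p\leq 1/4$. The key deterministic observation is that if the median $\psi_\delta$ lies outside the interval $(\E Z-t,\E Z+t)$, then at least $\lceil k/2\rceil$ of the $\bar Z_j$ lie outside that interval as well, i.e.\ $\sum_{j=1}^k W_j\geq k/2$. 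Then a binomial tail bound (Hoeffding's inequality, valid since the bad-block probability is bounded away from $1/2$) yields
\[
\P\Bigl(\sum_{j=1}^k W_j\geq \tfrac k2\Bigr)\leq \exp\bigl(-2k(\tfrac12-\tfrac14)^2\bigr)=\exp(-k/8)\leq \delta,
\]
by the choice $k\geq 8\log(2/\delta)$. On the complementary event, of probability at least $1-\delta$, we have $|\psi_\delta-\E Z|\leq t=2\sigma_Z/\sqrt m\leq c_1\sigma_Z\sqrt{\log(2/\delta)/N}$ using the lower bound on $m$, which is exactly \eqref{eq:optimal-mean}.

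I do not expect a genuine obstacle here; the construction and analysis are standard. The only points requiring a little care are bookkeeping: (i) extracting from $\delta\geq 2\exp(-c_0N)$, with $c_0$ suitably small, that $m\geq 1$ and that $m$ is comparable to $N/\log(2/\delta)$, so that the final rate has the stated $\sqrt{\log(2/\delta)/N}$ form; and (ii) arranging the binomial tail so that it decays like $e^{-ck}$ with a clean absolute constant, which is immediate from Hoeffding's inequality once $p\leq 1/4$. One may of course inflate the constant multiplying $\log(2/\delta)$ in the definition of $k$ to make (ii) comfortable; any fixed choice works and only affects $c_0,c_1$.
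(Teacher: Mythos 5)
Your proof is correct: it is the standard median-of-means argument, and the bookkeeping (Chebyshev on each block, Hoeffding on the Bernoulli indicators, and the translation between $\delta \geq 2\exp(-c_0 N)$ and $k \lesssim N$) is all in order. The paper does not actually prove Theorem~\ref{thm:optimal.1dim}; it cites median-of-means and the trimmed mean as known constructions achieving \eqref{eq:optimal-mean}, so you have supplied precisely the argument the paper is referring to.
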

\end{tcolorbox}

The fact that  \eqref{eq:optimal-mean} is best possible behaviour of a mean estimation procedure is standard; and among the examples of such optimal mean estimators are the \emph{median-of-means} (see, e.g.\ \cite{alon1999space,jerrum1886random,nemirovskij1983problem}) and the \emph{trimmed mean} (see, e.g.\ \cite{lugosi2021robust}). 
For more results on optimal mean-estimation procedures see e.g., \cite{abdalla2022covariance,catoni2012challenging,DeLeLuOl16,Min15}, and for a survey see \cite{lugosi2019mean}.

\begin{remark}
The identity of the mean estimation procedure that will be used in what follows  is of no importance. As far as the proof of Theorem \ref{thm:main} goes, all that matters is that $\psi_\delta$ satisfies \eqref{eq:optimal-mean}.
\end{remark}

The next ingredient is an almost optimal admissible sequence $(H_s)_{s\geq 0}$ whose existence is guaranteed by Theorem \ref{thm:MM}.
Such a sequence can be constructed by solving an optimization problem--something that can, in-principle, be done if one knows the underlying metric. Although the learner does not have access to the $L_2$ metric, the functional $d$ works equally well, as it satisfies that
\[
\eta^{-1} \|f-h\|_{L_2}  \leq d(f,h) \leq \eta \|f-h\|_{L_2} \quad\text{for every }  f,h\in H\cup\{0\}.
\]
Therefore, solving the optimization problem for $d$ would lead to an admissible sequence $(H_s)_{s\geq 0}$ for which
\begin{align}
\label{eq:construct.admissible.sequence}
 \sum_{s\geq 0} 2^{s/2} \| \Delta_s h\|_{L_2}
\leq c(\eta)  \E \sup_{h\in H} G_h.
\end{align}
In a similar fashion, an almost optimal admissible sequence of $(H-H)\cap r D$ can be constructed for every $r\geq 0$.

\begin{tcolorbox}
Having access to the $L_2$ distance oracle $d$ leads to  suitable admissible sequences for the classes we care about, namely $F$ and its localizations $(F-F)\cap r D$.

Moreover, thanks to those admissible sequences and the majorizing measures theorem, it is possible to identify $\E \sup_{h\in (F-F)\cap rD} G_h$ (up to multiplicative constants that depend on $\eta$).
Therefore, one may derive estimates on the fixed point condition $\E \sup_{h\in (F-F)\cap rD} G_h \leq \kappa r\sqrt{N}$.
\end{tcolorbox}

\subsection{Subgaussian mean estimators for multiplier classes}

Let $\xi \in L_4$ be a random variable (that need not be independent of $X$).

Our goal here is to design a uniform mean estimation procedure for  $\E\xi h$: the procedure receives as data the values $(\xi_i,h(X_i))_{i=1}^N$ and returns its `guess' of $\E \xi h$.

The procedure should  exhibit a subgaussian accuracy/confidence tradeoff: if $H$ were $L$-subgaussian, that is, if for every $f,h \in H \cup \{0\}$, $\|f-h\|_{\psi_2} \leq L \|f-h\|_{L_2}$, and if $\|\xi\|_{\psi_2} < \infty$, a nontrivial chaining argument (see \cite{mendelson2016upper} for the proof) shows that the following holds.
Let $H$ be a centrally symmetric\footnote{That is, if $h\in H$ then $-h \in H$.} set, put
\[ R_H = \sup_{h\in H} \|h\|_{L_2}\]
and set
\[d_*(H) = \left( \frac{ \E\sup_{h\in H} G_h }{ R_H} \right)^2  , \]
to be the \emph{critical dimension} of the set $H$.
Then for $N\geq c_0 d_\ast(H)$,  with probability at least $1-2\exp(-c_1d_*(H))$,
\begin{equation} \label{eq:subgaussian-emp-multiplier}
\sup_{h \in H} \left|\frac{1}{N} \sum_{i=1}^N \xi_i h(X_i) - \E \xi h \right|
\leq c_2(L) \frac{\|\xi\|_{\psi_2}}{\sqrt{N}} \cdot \E \sup_{h \in H} G_h.
\end{equation}

We would like to design a mean estimator that exhibits a similar error  as the empirical mean in \eqref{eq:subgaussian-emp-multiplier}, but holds for a heavy-tailed class of functions $H$ and a heavy-tailed multiplier $\xi$. Obviously, that estimator cannot be the empirical mean, whose performance deteriorates dramatically when the indexing class is not subgaussian.

As it happens, the subgaussian assumption can be relaxed considerably.

\begin{assumption}
\label{ass:H}
	$H$ is a centrally symmetric class of mean zero functions that satisfies for every $f,h \in H \cup \{0\}$,
$$
\|f-h\|_{L_4} \leq L \|f-h\|_{L_2}.
$$
Also, assume as always that the learner has access to a symmetric functional $d$ that satisfies
	\[  \eta^{-1}  \|f-h\|_{L_2} \leq d(f,h) \leq \eta  \|f-h\|_{L_2}
\]
	for every $f,h \in H \cup \{0\}$.
\end{assumption}

\begin{remark}
	Clearly the $L_4-L_2$ norm equivalence condition in Assumption \ref{ass:H} is significantly weaker than the $\psi_2-L_2$ norm equivalence that holds for subgaussian classes.
	In fact, functions in $H$ need not have $L_{4+\varepsilon}$ moments, let alone arbitrary high moments, and thus can heavy tailed.
\end{remark}

%
\begin{theorem}
\label{thm:multiplier}
If $H$ satisfies Assumption \ref{ass:H}, there is an absolute constant $c_1$ and a constant $c_2=c_2(L,\eta)$ for which  the following holds.
Let $\xi \in L_4$ and set $2^{s_0} \leq c_1N$.
There exists a procedure $\Phi_{\mathbbm{M}}$ for which, with probability at least $1-2\exp(-2^{s_0})$,
$$
\sup_{h \in H} \left| \Phi_{\mathbbm{M}}(h) - \E \xi h \right|
\leq c_2 \frac{\|\xi\|_{L_4}}{\sqrt{N}} \left(\E \sup_{h \in H} G_h + 2^{s_0/2} R_H\right).
$$
\end{theorem}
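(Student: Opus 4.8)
\emph{Proof proposal.}
The plan is to run the unorthodox chaining of Section~\ref{sec:chaining}: build the chain once and for all from the oracle $d$, and replace the empirical mean at every link by the optimal one-dimensional estimator $\psi_\delta$ of Theorem~\ref{thm:optimal.1dim}, with confidence parameters tuned scale by scale. Since the claim is vacuous unless $\E\sup_{h\in H}G_h<\infty$, Sudakov's minoration lets us pass to a net of $H$ at scale $\alpha\sim N^{-1/2}(\E\sup_{h\in H}G_h+2^{s_0/2}R_H)$; by Cauchy--Schwarz this changes $\E\xi h$ by at most $\|\xi\|_{L_4}\alpha$, already within the target error, and it lets us assume that $H$ is finite with $\log_2|H|\le c_0 N$. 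Using $d$ exactly as in Section~\ref{sec:chaining.admissible.seq}, fix an admissible sequence $(H_s)_{s\ge 0}$ of $H$ with $|H_0|=1$, $|H_s|\le 2^{2^s}$, $H_s=H$ once $2^{2^s}\ge|H|$, and $\sup_{h\in H}\sum_{s\ge 0}2^{s/2}\|\pi_{s+1}h-\pi_s h\|_{L_2}\le c(\eta)\,\E\sup_{h\in H}G_h$. In particular the chain stabilises at a level $s_1$ with $2^{s_1}\lesssim N$, so only finitely many links occur and, after adjusting constants, every confidence parameter used below lies in the admissible range of Theorem~\ref{thm:optimal.1dim}.

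Write $\Delta_s h=\pi_{s+1}h-\pi_s h$. Telescoping from level $s_0$ and using linearity of $u\mapsto\E\xi u$,
\[
\E\xi h=\E\xi\,\pi_{s_0}h+\sum_{s\ge s_0}\E\xi\,\Delta_s h ,
\]
and the estimator is the corresponding plug-in
\[
\Phi_{\mathbbm{M}}(h)=\psi_{\delta_0}\big(\xi\,\pi_{s_0}h\big)+\sum_{s\ge s_0}\psi_{\delta_s}\big(\xi\,\Delta_s h\big),
\]
where ``$\xi\,u$'' denotes the data vector $(\xi_i u(X_i))_{i=1}^N$, $\delta_0\sim\exp(-2^{s_0})$ and $\delta_s\sim\exp(-2^{s})$ (with absolute constants fixed so the union bounds below close). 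The procedure can form all these vectors: it knows the precomputed sequence $(H_s)$, hence the maps $\pi_s$, and it sees $(\xi_i,\pi_s h(X_i),\pi_{s+1}h(X_i))_{i=1}^N$. The point of estimating the coarse part $\pi_{s_0}h$ directly, rather than chaining down to it, is that $\|\pi_{s_0}h\|_{L_2}\le R_H$ is a clean bound (whereas $\sum_{s<s_0}\|\Delta_s h\|_{L_2}$ need not be comparable to $R_H$), while there are only $|H_{s_0}|\le 2^{2^{s_0}}$ values of $\pi_{s_0}h$, which matches a confidence of $\exp(-2^{s_0})$.

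For the accuracy, Cauchy--Schwarz together with the $L_4$--$L_2$ equivalence of Assumption~\ref{ass:H} (applied to $\pi_{s_0}h-0$ and to $\Delta_s h\in H-H$) gives, for the relevant link variable $Z$, $\sigma_Z\le\|\xi\|_{L_4}\|\pi_{s_0}h\|_{L_4}\le L\|\xi\|_{L_4}R_H$ for the coarse term and $\sigma_Z\le\|\xi\|_{L_4}\|\Delta_s h\|_{L_4}\le L\|\xi\|_{L_4}\|\Delta_s h\|_{L_2}$ for a level-$s$ link. On the event from Theorem~\ref{thm:optimal.1dim} for all the finitely many estimators involved, and since $\log(2/\delta_0)\sim 2^{s_0}$ and $\log(2/\delta_s)\sim 2^{s}$, the coarse term is estimated to within $\lesssim L\|\xi\|_{L_4}R_H\,2^{s_0/2}/\sqrt N$ and a level-$s$ link to within $\lesssim L\|\xi\|_{L_4}\|\Delta_s h\|_{L_2}\,2^{s/2}/\sqrt N$; summing over $s\ge s_0$ and invoking the admissible-sequence bound,
\[
\sup_{h\in H}\big|\Phi_{\mathbbm{M}}(h)-\E\xi h\big|\ \lesssim\ \frac{L\|\xi\|_{L_4}}{\sqrt N}\Big(2^{s_0/2}R_H+\sup_{h\in H}\sum_{s\ge 0}2^{s/2}\|\Delta_s h\|_{L_2}\Big)\ \le\ \frac{c(L,\eta)\|\xi\|_{L_4}}{\sqrt N}\Big(\E\sup_{h\in H}G_h+2^{s_0/2}R_H\Big),
\]
which is the claimed bound. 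For the probability: there are at most $2^{2^{s_0}}$ coarse estimators and at most $|H_s||H_{s+1}|\le 2^{2^{s+2}}$ links at level $s\ge s_0$, so the union bound costs at most $2^{2^{s_0}}\delta_0+\sum_{s\ge s_0}2^{2^{s+2}}\delta_s\le 2\exp(-2^{s_0})$ once the constants in $\delta_0,\delta_s$ are chosen so that each term is of order $\exp(-c\,2^{s})$ with $c>1$.

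The main obstacle is the simultaneous bookkeeping across scales. One needs the truncation level to be exactly $s_0$, so that the coarse estimate is paid for by the single factor $2^{s_0/2}R_H$ rather than by an uncontrolled $\sum_{s<s_0}\|\Delta_s h\|_{L_2}$; one needs each $\delta_s$ small enough for a union bound over $\le 2^{2^{s+2}}$ links yet large enough that $\log(2/\delta_s)\sim 2^{s}$ keeps the per-level error at the $\gamma_2$ rate; and one needs $\delta_s\ge 2\exp(-c_0 N)$ so that Theorem~\ref{thm:optimal.1dim} applies at every occurring level, which is precisely what forces the preliminary reduction to a finite $H$ with $\log|H|\lesssim N$. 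The only step beyond bookkeeping is extracting a near-optimal admissible sequence from the crude oracle $d$ alone; this is handled as in Section~\ref{sec:chaining.admissible.seq}, using that $d\sim\|\cdot\|_{L_2}$ on $H\cup\{0\}$, so optimality for $d$ implies optimality for $\|\cdot\|_{L_2}$ up to a factor depending only on $\eta$.
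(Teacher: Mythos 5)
Your proposal is correct and follows essentially the same route as the paper's proof: build a near-optimal admissible sequence for $H$ from the oracle $d$, start the chain at level $s_0$ and estimate the coarse part $\pi_{s_0}h$ directly (rather than chaining it down to $0$, precisely for the reason you note), plug in $\psi_{\delta_s}$ with $\delta_s\sim\exp(-2^s)$ at each link, control each variance by $L_4$--$L_2$ norm equivalence, and close the union bound over the $\lesssim 2^{2^{s+2}}$ link differences at level $s$. The one place you diverge is the handling of the tail of the chain beyond the largest usable level. The paper takes $s_1$ with $2^{s_1}\sim N$ (the largest $s$ for which $\delta_s$ is a legal confidence for Theorem~\ref{thm:optimal.1dim}), truncates the chain there, and controls the remainder deterministically by
\[
|\E\xi h-\E\xi\pi_{s_1}h|\le\|\xi\|_{L_2}\|h-\pi_{s_1}h\|_{L_2}\le \frac{c}{\sqrt N}\sum_{s\ge s_1}2^{s/2}\|\Delta_s h\|_{L_2},
\]
which is already within the $\gamma_2$-budget and uses nothing beyond the definition of the admissible sequence. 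You instead invoke Sudakov's minoration up front to pass to an $\alpha$-net with $\alpha\sim N^{-1/2}\E\sup_{h\in H}G_h$, so that $\log|H|\lesssim N$, the chain stabilises at some $s_1$ with $2^{s_1}\lesssim N$, and the net-discrepancy $|\E\xi h-\E\xi\pi(h)|\le\|\xi\|_{L_2}\alpha$ is absorbed into the target error. Both arguments are valid; the paper's is a touch more self-contained (no Sudakov, just the chain tail and $2^{s_1}\sim N$), while yours cleanly decouples ``make $H$ finite'' from ``chain over a finite class,'' which makes the legality of every $\delta_s$ immediate. Everything else---the choice $\delta_s\sim\exp(-2^s)$ balancing the $2^{2^{s+2}}$-term union bound against the $\gamma_2$ rate, the $2^{s_0/2}R_H$ cost of the coarse term, and the final constants depending only on $L$ and $\eta$---matches the paper.
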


In particular, if $N \geq c_1d_*(H)$, then setting $2^{s_0} \sim d_*(H)$ we have that with probability at least  $1-2\exp(-c_3d_*(H))$,
$$
\sup_{h \in H} \left| \Phi_{\mathbbm{M}}(h) - \E \xi h \right|
\leq c_4(L,\eta)\frac{\|\xi\|_{L_4}}{\sqrt{N}} \E \sup_{h \in H} G_h,
$$
thus exhibiting the wanted subgaussian behaviour as in \eqref{eq:subgaussian-emp-multiplier}.

The proof of Theorem \ref{thm:multiplier} follows a standard chaining argument---with one significant twist: the subgaussian increment condition holds thanks to the mean estimation procedure $\psi_\delta$ that satisfies \eqref{eq:optimal-mean} for the right choices of $\delta$.

\begin{proof}[Proof of Theorem \ref{thm:multiplier}]
Let $\alpha\geq 5$ be a well-chosen absolute constant and set $s_1$ to be the largest integer satisfying $\alpha 2^{s_1} \leq c_0 N$, where $c_0$ is the constant appearing in Theorem \ref{thm:optimal.1dim}.
In particular $s_0<s_1$.
Let $(H_s)_{s \geq 0}$ be  an admissible sequence of $H$ that satisfies \eqref{eq:construct.admissible.sequence}.

The proof consists of two steps.
First, we design a mean estimator for $\xi \pi_{s_1}h$, and later show that the means of $\xi h$ and $\xi \pi_{s_1}h$ are sufficiently close.

\vspace{0.5em}
\noindent
\emph{Step 1:}
Write
$$
\xi \pi_{s_1} h =  \xi \pi_{s_0} h+ \sum_{s=s_0}^{s_1-1} \xi \Delta_s h,
$$
and note that  for every $s\leq s_1$, $\delta_s=2\exp(- \alpha 2^s)$ is a legal choice of $\delta$ in  Theorem \ref{thm:optimal.1dim}.
Moreover, the cardinality of the set  $\{  \xi \Delta_s h : h \in H\}$ is at most $2^{2^{s+2}}$.
 Define
$$
\Phi_{\mathbbm{M}}(h) =
\psi_{\delta_{s_0}} \left(\left(\xi_i \pi_{\delta_{s_0}} h(X_i)\right)_{i=1}^N \right)
+\sum_{s=s_0}^{s_1-1} \psi_{\delta_s}\left(\left(\xi_i \Delta_s h(X_i)\right)_{i=1}^N \right),
$$
and the key is to show that with probability at least $1-2\exp(-2^{s_0})$, for every $h \in H$,
\begin{align} \label{eq:multi-proof-1}
 \left|\Phi_{\mathbbm{M}}(h) - \E \xi \pi_{s_1}h \right|
\leq & c_1(L)  \frac{\|\xi\|_{L_4}}{\sqrt{N}} \left(\sum_{s=s_0}^{s_1-1} 2^{s/2} \|\Delta_s h\|_{L_2} +  2^{s_0/2} \|\pi_{s_0}h\|_{L_2}\right) .
\end{align}

To establish \eqref{eq:multi-proof-1}, fix $s_0 \leq s < s_1$ and examine each term $\psi_{\delta_s}((\xi_i \Delta_s h(X_i))_{i=1}^N )$---which is an optimal mean estimator of $\xi \Delta_s h$. Hence, each random variable of the form $\xi \Delta_s h$ satisfies that with probability at least $1-2\exp(-\alpha 2^s)$,
\begin{equation} \label{eq:multiplier-1}
\left| \psi_{\delta_s}\left(\left(\xi_i \Delta_s h(X_i)\right)_{i=1}^N \right) - \E \xi \Delta_s h \right|
\leq c_2 \|\xi \Delta_s h\|_{L_2} \sqrt{\frac{2^s}{N}} ,
\end{equation}
and clearly,
$$
\|\xi \Delta_s h\|_{L_2} \leq \|\xi\|_{L_4} \|\Delta_s h\|_{L_4} \leq L \|\xi\|_{L_4} \|\Delta_s h\|_{L_2}.
$$
Moreover, thanks to the high probability estimate with which \eqref{eq:multiplier-1} holds and the fact that there are at most $2^{2^{s+2}} \leq \exp(-4 \cdot 2^s)$ functions of the form $\xi \Delta_s h$, we have that with probability $1-2\exp(- (\alpha-4) 2^s)$, every such function satisfies \eqref{eq:multiplier-1}.

Repeating this argument for every $s_0 \leq s \leq s_1-1$, and also for the functions $\{\xi \pi_{s_0} h : h \in H\}$, it is evident by the union bound that with probability at least $1-2\exp(- 2^{s_0})$, for every $h \in H$ and every $s_0 \leq s \leq s_1-1$,
\begin{align}
\begin{split}
\label{eq:multiplier-2}
\left| \psi_{\delta_s}\left(\left(\xi_i \Delta_s h(X_i)\right)_{i=1}^N \right) - \E \xi \Delta_s h \right|
&\leq c_2 L \|\xi\|_{L_4} \|\Delta_s h\|_{L_2} \sqrt\frac{2^s}{N} \quad\text{and}\\
\left| \psi_{\delta_{s_0}}\left(\left(\xi_i \pi_{s_0} h(X_i)\right)_{i=1}^N \right) - \E \xi \pi_{s_0} h \right|
&\leq c_2L \|\xi\|_{L_4} \|\pi_{s_0} h\|_{L_2} \sqrt\frac{2^{s_0}}{N} .
\end{split}
\end{align}
Finally,
\[
\E \xi \pi_{s_1} h
= \E \xi \pi_{s_0} h + \sum_{s=s_0}^{s_1-1} \E \xi \Delta_s h,
\]
and the wanted estimate in \eqref{eq:multi-proof-1} follows from \eqref{eq:multiplier-2}.

\vspace{0.5em}

\noindent
\emph{Step 2}:
Observe  that for every $h\in H$,
\begin{align}
\label{eq:mutiplier.diff.s1}
|\E \xi h - \E \xi \pi_{s_1} h|\leq
\frac{c_3 \|\xi\|_{L_2}}{\sqrt{N}} \sum_{s \geq s_1} 2^{s/2} \|\Delta_s h\|_{L_2}.
\end{align}
Indeed,
$$
\left| \E \xi h - \E \xi \pi_{s_1} h \right|
\leq  \|\xi(h-\pi_{s_1}h)\|_{L_1}
\leq \|\xi\|_{L_2} \|h - \pi_{s_1}h \|_{L_2}.
$$
Since $h - \pi_{s_1} h = \sum_{s \geq s_1} \Delta_s h$, it follows from  the triangle inequality and the choice of $s_1$ that
$$
\|h-\pi_{s_1} h\|_{L_2} \leq \frac{1}{2^{s_1/2}} \sum_{s \geq s_1} 2^{s_1/2} \|\Delta_s h\|_{L_2} \leq \frac{c_3}{\sqrt{N}} \sum_{s \geq s_1} 2^{s/2} \|\Delta_s h\|_{L_2},
$$
and \eqref{eq:mutiplier.diff.s1} follows.

The proof is completed by combining the estimates in \eqref{eq:multiplier-2} and \eqref{eq:mutiplier.diff.s1} and recalling that  $(H_s)_{s\geq 0}$ satisfies
\[\sum_{s \geq s_0} 2^{s/2} \|\Delta_s h\|_{L_2} \leq c_4(\eta) \E \sup_{h\in H} G_h.\qedhere\]
\end{proof}


\begin{corollary} \label{cor:multiplier-multiple}
Let $H$  be a class of mean zero functions that satisfies $L_4-L_2$ norm equivalence with constant $L$.
Let $2^{s_0}\leq c_1 N$ and  set ${\mathcal U}=\{\xi_j : 1 \leq j \leq 2\exp(- 2^{s_0-1})\} \subset L_4$ to be a collection of functions.
Then there is a procedure $\Phi_{\mathbbm{M}}$ for which, with probability $1-2\exp(- 2^{s_0-1})$, for every $\xi \in {\mathcal U}$,
$$
\sup_{h \in H} \left| \Phi_{\mathbbm{M}}(h,\xi) - \E \xi h \right|
\leq c(L)\frac{\|\xi\|_{L_4}}{\sqrt{N}} \left(\E \sup_{h \in H} G_h + 2^{s_0/2} R_H\right).
$$
\end{corollary}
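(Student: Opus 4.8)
The plan is to deduce Corollary \ref{cor:multiplier-multiple} from Theorem \ref{thm:multiplier} by applying that theorem to each multiplier $\xi_j\in\mathcal{U}$ and taking a union bound. The only point that requires a little care is that the procedure $\Phi_{\mathbbm{M}}(\cdot,\xi_j)$ must come from a single construction that is fed both the functions in $H$ and the multiplier $\xi_j$; this works because the admissible sequence underlying the construction in Theorem \ref{thm:multiplier} depends only on $H$ (through the distance oracle $d$), not on $\xi$.

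Concretely, I would first fix, once and for all, an almost optimal admissible sequence $(H_s)_{s\ge 0}$ of $H$ satisfying \eqref{eq:construct.admissible.sequence}; as noted in Section~\ref{sec:chaining.admissible.seq}, this sequence is obtained by solving the optimization problem associated with $d$ and is independent of the multiplier. With $s_1$ chosen as in the proof of Theorem \ref{thm:multiplier} (the largest integer with $\alpha 2^{s_1}\le c_0 N$, where $c_0$ is from Theorem \ref{thm:optimal.1dim}) and $\delta_s=2\exp(-\alpha 2^s)$, I define, for each $\xi\in\mathcal{U}$,
$$
\Phi_{\mathbbm{M}}(h,\xi)=\psi_{\delta_{s_0}}\!\left(\left(\xi_i\pi_{s_0}h(X_i)\right)_{i=1}^N\right)+\sum_{s=s_0}^{s_1-1}\psi_{\delta_s}\!\left(\left(\xi_i\Delta_s h(X_i)\right)_{i=1}^N\right),
$$
exactly the estimator from that proof but now regarded as a function of both arguments. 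For a fixed $\xi_j$, the proof of Theorem \ref{thm:multiplier} produces an event $\mathcal{E}_j$ of probability at least $1-2\exp(-2^{s_0})$ on which $\sup_{h\in H}\bigl|\Phi_{\mathbbm{M}}(h,\xi_j)-\E\xi_j h\bigr|\le c(L)\frac{\|\xi_j\|_{L_4}}{\sqrt N}\bigl(\E\sup_{h\in H}G_h+2^{s_0/2}R_H\bigr)$; here one simply reruns the chaining estimate \eqref{eq:multi-proof-1} together with the tail term \eqref{eq:mutiplier.diff.s1} with $\xi_j$ in place of $\xi$, the same $(H_s)_{s\ge 0}$ being reused throughout.

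Finally I would take a union bound over $j$. Since $|\mathcal{U}|\le \exp(2^{s_0-1})$, the event $\bigcap_j \mathcal{E}_j$ has probability at least
$$
1-|\mathcal{U}|\cdot 2\exp(-2^{s_0})\ \ge\ 1-2\exp\!\left(2^{s_0-1}-2^{s_0}\right)\ =\ 1-2\exp(-2^{s_0-1}),
$$
and on this event the asserted uniform bound holds simultaneously for every $\xi\in\mathcal{U}$ and every $h\in H$. I do not expect any genuine obstacle: the entire content lies in Theorem \ref{thm:multiplier}, and passing to a finite family of multipliers costs only a factor $|\mathcal{U}|$ in the probability, which is absorbed by halving the exponent $2^{s_0}$. (Should one wish to keep the exponent $2^{s_0}$, or allow a larger $\mathcal{U}$, it suffices to run the argument of Theorem \ref{thm:multiplier} with a slightly larger choice of the absolute constant $\alpha$, affecting only the absolute constants.)
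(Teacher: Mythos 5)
Your proposal is correct and is precisely the argument the paper intends: the corollary is stated without proof immediately after Theorem~\ref{thm:multiplier}, and the intended derivation is exactly the one you give — fix the admissible sequence (which depends only on $H$ via the oracle $d$ and not on $\xi$), apply the single-multiplier estimate from Theorem~\ref{thm:multiplier} to each $\xi_j\in\mathcal{U}$, and union bound over the $\lesssim\exp(2^{s_0-1})$ multipliers to trade the exponent $2^{s_0}$ for $2^{s_0-1}$. Your parenthetical remark about adjusting $\alpha$ to absorb the extra constant factor from $|\mathcal{U}|$ is also the right way to handle the (harmless) bookkeeping slack.
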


\begin{remark}
	The procedure $\Phi_{\mathbbm{M}}$ has access to the values $(h(X_i))_{i=1}^N$ and $(\xi_i)_{i=1}^N$ for every $\xi\in \mathcal{U}$, and it is constructed using an admissible sequence of $H$.
	However, it has no access to the identity of each $\xi$ (or even to the identity of the set $\mathcal{U}$) beyond the given sample.
\end{remark}

\subsection{A subgaussian mean estimator for a product class}

A somewhat more involved problem is dealing with products of classes.
Let $H^1,H^2 \subset L_2(\mu)$ be centrally symmetric classes of mean zero functions, and one would like to obtain a uniform estimate on $\E f h$ for $f\in H^1$ and $h\in H^2$.
The benchmark is the performance of the empirical mean when $H^1$ and $H^2$ are subgaussian with constant $L$.
In such a scenario the right estimate requires a rather involved chaining argument.

\begin{theorem} \label{thm:subgaussian-empirical-product}
Assume that $H^1$ and $H^2$ are centrally symmetric $L$-subgaussian classes of mean zero functions and set  $2^{s_0} \leq c_1 N$.
Then with probability at least $1-2\exp(-2^{s_0})$, for every $f \in H^1$ and $h \in H^2$,
$$
\left|\frac{1}{N}\sum_{i=1}^N f(X_i)h(X_i) - \E fh \right|
\leq \frac{c_2(L)}{\sqrt N} \left( R_{H^1} \E \sup_{h \in H^2} G_h + R_{H^2} \E \sup_{f \in H^1} G_f +2^{s_0/2} R_{H^1} R_{H^2}\right).
$$
\end{theorem}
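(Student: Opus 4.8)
The statement is a standard subgaussian product-class bound, so the natural route is a two-dimensional generic chaining argument with the increment condition supplied by the subgaussian (Gaussian-tail) behaviour of products of $L$-subgaussian functions. First I would fix almost-optimal admissible sequences $(H^1_s)_{s\geq 0}$ and $(H^2_s)_{s\geq 0}$ of $H^1$ and $H^2$ respectively, with associated approximation maps $\pi_s^1, \pi_s^2$ and increments $\Delta^1_s f = \pi^1_{s+1}f - \pi^1_s f$, $\Delta^2_s h = \pi^2_{s+1}h - \pi^2_s h$, chosen so that $\sum_{s\geq 0} 2^{s/2}\|\Delta^1_s f\|_{L_2} \lesssim \E\sup_{f\in H^1} G_f$ uniformly in $f$, and likewise for $H^2$. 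Write the centred empirical process $Z_{f,h} = \frac1N\sum_i f(X_i)h(X_i) - \E fh$ and telescope along the product chain: $Z_{f,h} - Z_{\pi_0^1 f,\pi_0^2 h}$ splits into a "diagonal" part along $\Delta^1_s f \cdot \pi^2_s h$, a symmetric part along $\pi^1_s f \cdot \Delta^2_s h$, and a "bilinear" part along $\Delta^1_s f \cdot \Delta^2_s h$; since $|H^1_0|=|H^2_0|=1$ the base term $Z_{\pi^1_0 f,\pi^2_0 h}$ involves only one random variable and is controlled at level $2^{s_0/2}R_{H^1}R_{H^2}/\sqrt N$ by Bernstein/$\psi_1$ concentration.

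For the increments, the crucial observation is that if $f\in H^1\cup\{0\}$ and $h\in H^2\cup\{0\}$ are $L$-subgaussian, then $fh$ has a $\psi_1$-norm controlled by $\|f\|_{\psi_2}\|h\|_{\psi_2} \lesssim L^2\|f\|_{L_2}\|h\|_{L_2}$, so $\frac1N\sum_i f(X_i)h(X_i)-\E fh$ enjoys a Bernstein-type deviation inequality: for $t>0$ it is at most $C\|f\|_{\psi_2}\|h\|_{\psi_2}(\sqrt{t/N} + t/N)$ with probability $1-2e^{-t}$. Apply this at stage $s$ to each of the $\le 2^{2^{s+2}}$ products $\Delta^1_s f \cdot \pi^2_s h$ with $t \sim 2^s$ (taking the absolute constant in front large enough to beat the $2^{2^{s+2}}$ union bound), so the linear-regime term $\sqrt{t/N}$ dominates as long as $2^s \lesssim N$ — which is exactly the role of the cutoff $2^{s_0}\le c_1 N$, after which the chain is truncated and the tail $\sum_{s> s_1} 2^{s/2}\|\Delta_s\cdot\|_{L_2}$ handled crudely via $\|\cdot\|_{L_2}$ as in Step 2 of the proof of Theorem~\ref{thm:multiplier}. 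Since $\|\Delta^1_s f \cdot \pi^2_s h\|$-type factors are bounded by $\|\Delta^1_s f\|_{L_2}\cdot R_{H^2}$ (using $\|\pi^2_s h\|_{L_2}\le R_{H^2}$), summing gives $\frac{c(L)}{\sqrt N} R_{H^2}\sum_s 2^{s/2}\|\Delta^1_s f\|_{L_2} \lesssim \frac{c(L)}{\sqrt N}R_{H^2}\,\E\sup_{f\in H^1}G_f$; symmetrically for the other term. For the bilinear part $\Delta^1_s f\cdot\Delta^2_s h$ one bounds $\|\Delta^1_s f\|_{L_2}\|\Delta^2_s h\|_{L_2}$ and uses $\sum_s 2^{s/2}(\cdots) \le (\sum_s 2^{s/2}\|\Delta^1_s f\|_{L_2})^{1/2}(\sum_s 2^{s/2}\|\Delta^2_s h\|_{L_2})^{1/2}$... actually more simply, Cauchy–Schwarz gives this term is $\lesssim \frac{1}{\sqrt N}\sup_f\sum_s 2^{s/2}\|\Delta^1_s f\|_{L_2}\cdot\sup_h 2^{s/2}\|\Delta^2_s h\|_{L_2}$ which is absorbed into the product $\E\sup G_f\cdot\E\sup G_h/\sqrt N$, itself dominated by the other two terms plus the $2^{s_0/2}R_{H^1}R_{H^2}$ term; one should double-check it does not produce an unwanted $R_{H^1}\E\sup G_h \cdot (\text{something})$ — but since $\E\sup_f G_f \le \sqrt{N}\cdot(\text{fixed point scale})$ is not available here, one instead pairs each $2^{s/2}\|\Delta^1_s f\|_{L_2}$ with $\|\Delta^2_s h\|_{L_2}\le R_{H^2}$ to land inside the first term. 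A union bound over the three families of increments and over $s_0\le s\le s_1$ yields the claimed probability $1-2\exp(-2^{s_0})$.

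The main obstacle — and the only genuinely nonroutine point — is the correct bookkeeping of the increment condition for \emph{products}: unlike in the multiplier theorem where one factor is fixed, here both factors move, and one must choose which factor to "freeze" at scale $R$ and which to let carry the chaining sum, making sure the bilinear remainder $\sum_s 2^{s/2}\|\Delta^1_s f\|_{L_2}\|\Delta^2_s h\|_{L_2}$ is genuinely of lower order rather than of size $\frac1{\sqrt N}\E\sup G_f \cdot \E\sup G_h$ (which could be larger than the stated bound if $\E\sup G$ were to exceed $\sqrt N R$, though in the regime $2^{s_0}\le c_1 N$ with the truncation at $s_1$ this is precisely what is prevented). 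Everything else — the $\psi_1$ bound on $fh$, the Bernstein inequality, the union bound arithmetic, the truncation tail — is by now standard and parallels the proof of Theorem~\ref{thm:multiplier} almost verbatim, with $\psi_\delta$ replaced by the empirical mean (legitimate here because $H^1, H^2$ are subgaussian, so the empirical mean already has subgaussian accuracy).
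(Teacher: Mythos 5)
The paper does not prove this statement at all: it simply cites Theorem~1.13 of \cite{mendelson2010empirical}. Your plan supplies a self-contained product-chaining proof, which is indeed the standard route to such a bound and mirrors almost verbatim the proof the paper gives for the heavy-tailed analogue (Theorem~\ref{thm:main-product}), with the mean estimator $\psi_\delta$ replaced by the empirical mean; that replacement is legitimate here because the $L$-subgaussianity of $H^1,H^2$ makes the empirical mean already achieve the subgaussian increment condition. The one genuine issue in your write-up is the starting point of the chain: you telescope from $Z_{\pi_0^1 f,\pi_0^2 h}$ at level $s=0$, but the target confidence is $1-2\exp(-2^{s_0})$, so at every stage $s<s_0$ the union bound forces you to take $t\sim 2^{s_0}$ rather than $t\sim 2^s$, and summing those increments then yields an error $\lesssim R_{H^2}\sqrt{2^{s_0}/N}\sum_{s<s_0}\|\Delta_s^1 f\|_{L_2}$ which is not bounded by $2^{s_0/2}R_{H^1}R_{H^2}/\sqrt{N}$ without an unwanted extra factor (on the order of $s_0$). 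The clean fix, and the one the paper itself uses in Theorem~\ref{thm:main-product}, is to begin the chain at stage $s_0$: take $\pi^1_{s_0}f\cdot\pi^2_{s_0}h$ as the base, control the at most $2^{2^{s_0+2}}$ base values by Bernstein with $t\sim 2^{s_0}$ (producing the $2^{s_0/2}R_{H^1}R_{H^2}/\sqrt{N}$ term), and chain only over $s_0\leq s<s_1$. Modulo that adjustment, the rest—the $\psi_1$-bound $\|fh\|_{\psi_1}\lesssim L^2\|f\|_{L_2}\|h\|_{L_2}$, the Bernstein increment condition, absorbing the bilinear piece $\Delta^1_s f\,\Delta^2_s h$ by freezing one factor at $R$ so it lands inside the cross terms, and the crude $L_2$ bound on the tail past $s_1$—is correct and is the argument one would expect behind the cited reference.
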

Theorem \ref{thm:subgaussian-empirical-product} is an immediate outcome of Theorem~1.13 in \cite{mendelson2010empirical}.

As it happens, one may replace the empirical mean by a better mean estimator and obtain the same subgaussian bound --- even if the classes merely satisfy $L_4-L_2$ norm equivalence rather than the $\psi_2-L_2$ norm equivalence needed in Theorem \ref{thm:subgaussian-empirical-product}.

\begin{theorem} \label{thm:main-product}
If $H^1$ and $H^2$ satisfy Assumption \ref{ass:H}, then there is an absolute constant $c_1$ and a constant $c_2=c_2(L,\eta)$ for which  the following holds.
For every $2^{s_0} \leq c_1N$, there is a procedure  $\Psi_{\mathbbm{Q}}$ for which, with probability at least $1-2\exp(-2^{s_0})$,  for every $f\in H^1$ and $h\in H^2$,
\begin{align*}
\left| \Psi_{\mathbbm{Q}} (f,h) - \E fh\right|
\leq  \frac{c_2}{\sqrt N} \left( R_{H^1} \E \sup_{h \in H^2} G_h + R_{H^2} \E \sup_{f \in H^1} G_f +2^{s_0/2} R_{H^1} R_{H^2}\right).
\end{align*}
\end{theorem}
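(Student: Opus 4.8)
The plan is to run a \emph{double} generic chaining — over $H^1$ and $H^2$ at once — in the spirit of the proof of Theorem~\ref{thm:multiplier}, replacing each one-dimensional increment estimate with the optimal mean estimator $\psi_\delta$ of Theorem~\ref{thm:optimal.1dim}. As in Section~\ref{sec:chaining.admissible.seq}, fix admissible sequences $(H^1_s)_{s\ge0}$ and $(H^2_s)_{s\ge0}$ obtained by running the majorizing-measures optimization with the distance oracle $d$ in place of the (inaccessible) $L_2$ metric, so that $\sum_{s\ge0}2^{s/2}\|\Delta_s^j w\|_{L_2}\le c(\eta)\,\E\sup_{w\in H^j}G_w$ for $j=1,2$. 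Let $\alpha$ be a large absolute constant, let $s_1$ be the largest integer with $\alpha 2^{s_1}\le c_0 N$ (so $2^{s_1}\sim N$ and $s_0<s_1$), and put $\delta_s=2\exp(-\alpha 2^s)$, a legal confidence level in Theorem~\ref{thm:optimal.1dim} for $s\le s_1$. The key is the telescoping identity for products: for $f\in H^1$ and $h\in H^2$,
\[
\pi_{s_1}^1 f\cdot\pi_{s_1}^2 h \;=\; \pi_{s_0}^1 f\cdot\pi_{s_0}^2 h + \sum_{s=s_0}^{s_1-1}\bigl(\Delta_s^1 f\cdot\pi_{s+1}^2 h + \pi_s^1 f\cdot\Delta_s^2 h\bigr),
\]
which suggests defining
\[
\Psi_{\mathbbm{Q}}(f,h)=\psi_{\delta_{s_0}}\!\bigl((\pi_{s_0}^1 f(X_i)\pi_{s_0}^2 h(X_i))_{i=1}^N\bigr)+\sum_{s=s_0}^{s_1-1}\psi_{\delta_s}\!\bigl((\Delta_s^1 f(X_i)\pi_{s+1}^2 h(X_i)+\pi_s^1 f(X_i)\Delta_s^2 h(X_i))_{i=1}^N\bigr).
\]
The procedure knows the admissible sequences (hence the functions $\pi_s^j w$ and $\Delta_s^j w$) and can evaluate them on the sample, so $\Psi_{\mathbbm{Q}}$ is admissible.

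For the increment condition, observe that each level-$s$ summand is a mean estimate of a product of a \emph{small increment} and a \emph{bounded approximant}; this is the only place where the $L_4-L_2$ equivalence is needed. By Cauchy--Schwarz and Assumption~\ref{ass:H},
\[
\|\Delta_s^1 f\cdot\pi_{s+1}^2 h\|_{L_2}\le\|\Delta_s^1 f\|_{L_4}\|\pi_{s+1}^2 h\|_{L_4}\le L^2\|\Delta_s^1 f\|_{L_2}\,R_{H^2},
\]
and symmetrically $\|\pi_s^1 f\cdot\Delta_s^2 h\|_{L_2}\le L^2 R_{H^1}\|\Delta_s^2 h\|_{L_2}$, while the base term obeys $\|\pi_{s_0}^1 f\cdot\pi_{s_0}^2 h\|_{L_2}\le L^2 R_{H^1}R_{H^2}$. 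By Theorem~\ref{thm:optimal.1dim}, each of these one-dimensional estimates is within $c\sqrt{2^s/N}$ times the corresponding $L_2$ norm of the true mean with probability at least $1-\delta_s$. Since there are at most $2^{c2^s}$ distinct random variables of the form $\Delta_s^1 f\cdot\pi_{s+1}^2 h+\pi_s^1 f\cdot\Delta_s^2 h$ (as $|H^j_{s+1}|\le 2^{2^{s+1}}$), choosing $\alpha$ large enough and using the doubly-exponential decay to take a union bound over these and over $s_0\le s\le s_1-1$ shows that with probability at least $1-2\exp(-2^{s_0})$, simultaneously for all $f\in H^1$, $h\in H^2$,
\[
\bigl|\Psi_{\mathbbm{Q}}(f,h)-\E\,\pi_{s_1}^1 f\,\pi_{s_1}^2 h\bigr|\le\frac{c(L)}{\sqrt N}\Bigl(R_{H^2}\!\sum_{s\ge s_0}\!2^{s/2}\|\Delta_s^1 f\|_{L_2}+R_{H^1}\!\sum_{s\ge s_0}\!2^{s/2}\|\Delta_s^2 h\|_{L_2}+2^{s_0/2}R_{H^1}R_{H^2}\Bigr).
\]

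It remains to pass from $\E\,\pi_{s_1}^1 f\,\pi_{s_1}^2 h$ to $\E fh$. Writing $fh-\pi_{s_1}^1 f\,\pi_{s_1}^2 h=(f-\pi_{s_1}^1 f)h+\pi_{s_1}^1 f\,(h-\pi_{s_1}^2 h)$ and applying Cauchy--Schwarz gives $|\E fh-\E\,\pi_{s_1}^1 f\,\pi_{s_1}^2 h|\le R_{H^2}\|f-\pi_{s_1}^1 f\|_{L_2}+R_{H^1}\|h-\pi_{s_1}^2 h\|_{L_2}$, and since by the choice of $s_1$ one has $\|w-\pi_{s_1}^j w\|_{L_2}\le 2^{-s_1/2}\sum_{s\ge s_1}2^{s/2}\|\Delta_s^j w\|_{L_2}\le(c/\sqrt N)\sum_{s\ge s_1}2^{s/2}\|\Delta_s^j w\|_{L_2}$, this tail is absorbed into the right-hand side above. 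Finally, invoking the admissible-sequence estimate \eqref{eq:construct.admissible.sequence} and the majorizing measures theorem (Theorem~\ref{thm:MM}), $\sum_{s\ge s_0}2^{s/2}\|\Delta_s^j w\|_{L_2}\le c(\eta)\,\E\sup_{w\in H^j}G_w$, which yields the asserted bound with $c_2=c_2(L,\eta)$. The main obstacle is purely combinatorial bookkeeping: balancing the level-$s$ cardinality (about $2^{c2^s}$ product links) against the confidence level $\delta_s$ so that the union bound telescopes to $2\exp(-2^{s_0})$ — the same balancing act as in the proof of Theorem~\ref{thm:multiplier}, now carried out along two nested chains. Beyond identifying the correct telescoping identity for products and noting that $L_4-L_2$ equivalence of each factor is exactly what controls the $L_2$ norm of a link, no genuinely new difficulty arises.
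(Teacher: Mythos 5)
Your argument is correct and is essentially the same as the paper's: both define $\Psi_{\mathbbm{Q}}$ by applying $\psi_{\delta_s}$ to the sample values of $\pi_{s+1}f\,\pi_{s+1}h - \pi_s f\,\pi_s h$ (your ``sum'' form $\Delta_s^1 f\cdot\pi_{s+1}^2 h + \pi_s^1 f\cdot\Delta_s^2 h$ is algebraically identical), bound the $L_2$ norm of each link via Cauchy--Schwarz and the $L_4$--$L_2$ equivalence, union-bound over the doubly-exponentially many links, and then control the residual $\E fh - \E\,\pi_{s_1}f\,\pi_{s_1}h$ using the choice $2^{s_1}\sim N$ and \eqref{eq:construct.admissible.sequence}. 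There is no substantive difference in approach.
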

\begin{proof}
To simplify notation, for any function $a=a(X)$, we write $\psi_\delta(a)$ instead of \linebreak $\psi_\delta((a(X_i)_{i=1}^N)$.

Set $\alpha\geq 10$ to be a well-chosen absolute constant and let $s_1$ be the largest integer satisfying $\alpha 2^{s_1} \leq c_0 N$; hence $s_0<s_1$.
Set $\delta_s=2\exp(-\alpha 2^s)$ and let $(H^1_s)_{s \geq 0}$ and $(H^2_s)_{s\geq 0}$ be  admissible sequences of $H^1$ and $H^2$ that satisfy \eqref{eq:construct.admissible.sequence}.

\vspace{0.5em}

\noindent
{\emph Step 1:} We begin by designing a uniform mean estimator for the products $\pi_{s_1}f \pi_{s_1} h $.
To that end, set
\[ \Psi(f,h) = \psi_{\delta_{s_0}}\left( \pi_{s_0}f \pi_{s_0} h \right) +  \sum_{s=s_0}^{s_1-1} \psi_{\delta_{s}}\left( \pi_{s+1}f\pi_{s+1} h - \pi_{s}f\pi_{s} h \right). \]
Let us show that with probability at least $1-2\exp(-2^{s_0})$, for every $f\in H^1$ and $h\in H^2$,
\begin{align}
\label{eq:quadratic}
\left| \Psi(f,h) - \E \pi_{s_1}f\pi_{s_1} h \right|
&\leq \frac{c}{\sqrt N} \left(    R_{H^1}  \E \sup_{h \in H^2} G_h +  R_{H^2} \E \sup_{f \in H^1} G_f + \sqrt{2^{s_0}} R_{H^1} R_{H^2} \right),
\end{align}
where $c=c(L,\eta)$.

Let $s \leq s_1$ and note that $\delta_s$ is a legal choice of $\delta$ in the context of Theorem \ref{thm:optimal.1dim}.
Therefore, with probability at least $1-2\exp(-\alpha 2^s)$,
\begin{align}
\label{eq:quadratic.2}
\begin{split}
&\left| \psi_{\delta_{s}}\left( \pi_{s+1}f\pi_{s+1} h - \pi_{s}f\pi_{s} h \right) - \E\left( \pi_{s+1}f\pi_{s+1} h - \pi_{s}f\pi_{s} h \right)  \right|  \\
& \qquad \leq c_1 \|\pi_{s+1}f\pi_{s+1} h - \pi_{s}f\pi_{s} h \|_{L_2} \sqrt{\frac{2^s}{N}} .
\end{split}
\end{align}
There are at most $2^{2^{s+4}}$ functions of the form $\pi_{s+1}f\pi_{s+1} h - \pi_{s}f\pi_{s} h$, and it follows from the union bound that with probability at least $1-2\exp(-2\cdot 2^{s_0})$, \eqref{eq:quadratic.2}  holds uniformly for every $f\in H^1$, $h\in H^2$ and $s_0\leq s<s_1$ .
Moreover,
	\begin{align*}
	 \pi_{s+1}f\pi_{s+1} h - \pi_{s}f\pi_{s} h
	= (\Delta_s f) \pi_{s+1} h - (\Delta_s h)\pi_{s}f,
	\end{align*}
and by the Cauchy-Schwartz inequality and the $L_4-L_2$ norm-equivalence,
	\begin{align*}
	\| \pi_{s+1}f\pi_{s+1} h - \pi_{s}f\pi_{s} h \|_{L_2}
	&\leq \|\Delta_s f\|_{L_4} \| \pi_{s+1} h\|_{L_4} + \|\Delta_s h \|_{L_4} \|\pi_{s}f\|_{L_4} \\
	&\leq L^2\left( \|\Delta_s f\|_{L_2} R_{H^2} + \|\Delta_s h \|_{L_2} R_{H^1} \right).
	\end{align*}
Therefore, in the high probability event in which \eqref{eq:quadratic.2}  holds,
\begin{align*}
&\sum_{s=s_0}^{s_1-1} \left| \psi_{\delta_{s}}\left( \pi_{s+1}f\pi_{s+1} h - \pi_{s}f\pi_{s} h \right) - \E\left( \pi_{s+1}f\pi_{s+1} h - \pi_{s}f\pi_{s} h \right)  \right|  \\
& \qquad \leq  \frac{ c_1 L^2  }{ \sqrt N} \sum_{s=s_0}^{s_1-1}  2^{s/2}  \left(  R_{H^2}\|\Delta_s f\|_{L_2} + R_{H^1} \|\Delta_s h \|_{L_2} \right)  \\
&\qquad \leq  \frac{ c_2(L,\eta)   }{ \sqrt N}  \left(  R_{H^2} \E \sup_{f\in H^1} G_f + R_{H^1} \E \sup_{h\in H^2} G_h \right) ,
\end{align*}
where the last inequality follows thanks to the choices of  the admissible sequences $(H^1_s)_{s \geq 0 }$ and  $(H^2_s)_{s \geq 0 }$.

	The same arguments can be used to show that with probability at least $1-2\exp(-2 \cdot 2^{s_0})$, for every $f\in H^1$ and $h\in H^2$,
	\begin{align*}
\left| \psi_{\delta_{s_0}}\left(  \pi_{s_0}f\pi_{s_0} h \right) - \E \pi_{s_0}f\pi_{s_0} h   \right|
&  \leq c_3(L) R_{H^1} R_{H^2} \sqrt{\frac{2^{s_0}}{N}},
\end{align*}
	and \eqref{eq:quadratic}  follows.

	\vspace{0.5em}
	
	\noindent
	\emph{Step 2:}
		To complete the proof, it suffices to show that for every $f\in H^1$ and $h\in H^2$,
	\begin{align}
	\label{eq:quadratic.s1}
	\left| \E fh - \E \pi_{s_1}f \pi_{s_1}h \right|
	&\leq \frac{c_4(\eta)}{\sqrt N} \left( R_{H^1}  \E\sup_{h\in H^2} G_h  +  R_{H^2} \E\sup_{f\in H^1} G_f \right).
	\end{align}
To that end, note that
	\begin{align*}
	\left| \E fh - \E \pi_{s_1}f \pi_{s_1}h \right|
	&\leq \|f-\pi_{s_1}f\|_{L_2} \|h\|_{L_2} + \|h-\pi_{s_1}h\|_{L_2} \| \pi_{s_1} f\|_{L_2},
\\
	\end{align*}
and $\|h\|_{L_2}\leq R_{H^2}$, $\|\pi_{s_1}f\|_{L_2}\leq R_{H^1}$.
Finally, using that $2^{s_1}\sim N$ and  \eqref{eq:construct.admissible.sequence},
	\[ \|f-\pi_{s_1}f\|_{L_2}
	\leq \frac{1}{2^{s_1/2}} \sum_{s\geq s_1} 2^{s/2} \|\Delta_s f\|_{L_2}
	\leq  \frac{c_5(\eta) }{\sqrt N} \E \sup_{ f\in H^1} G_f.
\]
	The analogue estimate on  $\|h-\pi_{s_1}h\|_{L_2} $ clearly follows from the same argument, showing that  \eqref{eq:quadratic.s1} holds.
\end{proof}

\section{A crude risk oracle}
\label{sec:crude}

From now on, we shall always assume the norm equivalence condition and the existence of a distance oracle, namely:
\begin{tcolorbox}
\begin{itemize}
\item For every $f,h \in F \cup \{0\}$, $\|f-h\|_{L_4} \leq L \|f-h\|_{L_2}$.
\item For every $f \in F$, $\|f-Y\|_{L_4} \leq L \|f-Y\|_{L_2}$.
\item  $d$ is symmetric and  $\eta^{-1}\|f-h\|_{L_2}\leq d(f,h)\leq \eta \|f-h\|_{L_2}$ for every $f,h \in F \cup \{0\}$.
\end{itemize}
\end{tcolorbox}
%

\begin{proof}[Proof of Theorem \ref{thm:crude-risk-estimate.intro}]
Let $\theta$ and $\gamma$ be (small) constants that  depend on $\eta$ and $L$ and that are chosen in what follows.
Moreover, set $2^{s_0} = \theta^2 N$, and assume that  $\log \mathcal{M}(F,\gamma rD)\leq 2^{s_0-1}$.

First we use the distance oracle $d$ to  construct a partition of $F$ to at most $\exp(2^{s_0-1})$ sets in the following way: let $(u_j)$ be a maximal $\eta \gamma r$ separated subset of $F$ with respect to $d$.
Therefore, it is an $\gamma r$-separated set with respect to $L_2$ and hence its cardinality is at most $\exp(2^{s_0-1})$.
Set $U_j$ to be the corresponding partition of $F$ to disjoint sets whose `centres' are $u_j$.
It follows that every $f\in F$ belongs to a unique set $U_j$; thus $d(u_j,f)\leq  \eta\gamma r$, implying that
\[\|f-u_j\|_{L_2} \leq  \eta^2 \gamma r.\]

Let $\psi_{\delta_{s_0}}$ be the optimal mean estimator as in Theorem \ref{thm:optimal.1dim} with $\delta_{s_0}=2\exp(-2^{s_0})$.
It follows from the union bound that with probability at least $1-2\exp(-2^{s_0-1})$, for every $1\leq j \leq \exp(2^{s_0-1})$, 
\begin{align}
\label{eq:high.prob.crude}
\left| \psi_{\delta_{s_0}}\left( (u_j-Y)^2  \right) - \E (u_j- Y)^2 \right|
\leq c_1 \sqrt\frac{2^{s_0}}{N} \| (u_j- Y)^2 \|_{L_2}
\leq \frac{1}{8}   \| u_j- Y\|_{L_2}^2, 
\end{align}
where we have also used norm equivalence in the second inequality and that $2^{s_0}= \theta^2 N$ for $\theta= 1/(8c_1L)$.
For $f\in F$, let $u_j$ be the center of the unique $U_j$ where $f$ belongs to and set
\[ \Psi_{\mathbb{C}}(f) = \psi_{\delta_{s_0}}\left( (u_j-Y)^2 \right).\]
Thus, in the event in which \eqref{eq:high.prob.crude} holds, we have that  for every $f\in F$, 
\begin{align*}
 \left|  \Psi_{\mathbb{C}}(f)  - \E (f-Y)^2 \right|
&\leq   \frac{1}{8} \E( u_j- Y)^2 + \left| \E( u_j- Y)^2 -  \E( f- Y)^2 \right| \\
&\leq   \frac{1}{8}  \E( f- Y)^2 +  \frac{9}{8}  \left| \E( u_j- Y)^2 - \E( f- Y)^2\right|
=(1) .  
\end{align*}
Therefore, if  $\E( f- Y)^2$ and $\E( u_j- Y)^2$ are sufficiently close, namely 
\begin{align}
\label{eq:to.show.est.sigma}
\left| \E (u_j-Y)^2 - \E (f-Y)^2 \right|  \leq \frac{1}{9} \E (f-Y)^2  + \frac{1}{4}r^2,
\end{align}
then  $(1) \leq \frac{1}{2}\max\{ r^2 , \E (f-Y)^2\}$.

To show  that \eqref{eq:to.show.est.sigma} holds, observe that
\[(u_j-Y)^2 - (f-Y)^2  = (u_j-f)^2 +2 (u_j-f)(f-Y),\]
and thus
\begin{align*}
\left| \E (u_j-Y)^2 - \E (f-Y)^2 \right|
&\leq  \| u_j-f\|_{L_2}^2 +2 \|u_j-f\|_{L_2} \|f-Y\|_{L_2} \\
&\leq  (\eta^2 \gamma r)^2  +2 \eta^2 \gamma r \|f-Y\|_{L_2} \\
&\leq  (\eta^2 \gamma r)^2  +9 ( \eta^2 \gamma r)^2  + \frac{\|f-Y\|_{L_2}^2}{9},
\end{align*}
where we have used the elementary fact $ab\leq \frac{t}{4} a^2 +  \frac{1}{t} b^2$ for  $a,b,t>0$ in the last inequality.
Thus, if $\gamma=\gamma(\eta)$ is a sufficiently small constant, \eqref{eq:to.show.est.sigma} follows
\end{proof}

\begin{proof}[Proof of Corollary \ref{cor:crude-min.intro}]
	Recall that $\sigma^2=\inf_{f\in F} \E (f-Y)^2$.
	Fix a realization of $(X_i,Y_i)_{i=1}^N$ for which the assertion of Theorem \ref{thm:crude-risk-estimate.intro} holds, i.e.,  for every $f\in F$,
	\[\left| \Psi_{\mathbbm{C}}(f) - \E (f(X)-Y)^2 \right|
\leq \frac{1}{2} \max\left\{ r^2, \E(f(X)-Y)^2 \right\}.
\]
	It follows that
	if $\E (f(X)-Y)^2 > r^2$, then
\[ \frac{1}{2}\E (f(X)-Y)^2  \leq \Psi_{\mathbbm{C}}(f) \leq 2 \E (f(X)-Y)^2,\]
and if $\E (f(X)-Y)^2 \leq r^2$,  then
\[ \Psi_{\mathbbm{C}}(f) \leq 2 r^2.\]
Therefore, setting
\[ \widehat{\sigma}^2 = \inf_{f\in F} \Psi_{\mathbb{C}}(f),\]
	it is evident that if $\sigma\geq r$ we have that  $\frac{1}{2}\sigma^2\leq \widehat{\sigma}^2 \leq 2 \sigma^2$, and  if $\sigma<r$, then $\widehat{\sigma}^2 \leq 2 r^2$.
	Let $\sigma_\ast=\max\{r,\widehat\sigma\}$ and observe that $f^\ast\in \widehat{V}$, i.e.\ $\Phi_{\mathbb{C}}(f^\ast) \leq 4 \sigma_\ast^2$.
	Indeed, if $\sigma>r$, then
	\[\Psi_{\mathbbm{C}}(f^\ast)\leq 2 \sigma^2 \leq 4\widehat{\sigma}^2,\]
	 and if  $\sigma\leq r$, then $\Psi_{\mathbbm{C}}(f^\ast)\leq 2r^2$.
	
	Finally, to show that if $f\in  \widehat{V}$, then $\E (f-Y)^2 \leq 16 \max\{r^2,\sigma^2\}$, we may assume that  $\E (f-Y)^2> r^2$ and therefore
	\begin{align*}
	\E (f-Y)^2
	\leq 2\Psi_{\mathbb{C}}(f)
	&\leq 2 \cdot 4 \max\{r^2, \widehat{\sigma}^2\}
	\leq 16 \max\{ r^2 , \sigma^2\},
	\end{align*}
	as claimed.
\end{proof}

\section{A mixture mean estimation procedure}
\label{sec:fine}

Finally we have all the ingredients needed for the proof of Theorem \ref{thm:fine.risk.intro}.
But before we proceed we need to make a short detour:
Set $U=W=F-F$, and $V=F$.
For every  $f,h\in F$, let $u=w=f-h$ and note that $u \in U$ and $w=W$. Also, set $v=h$. Hence,
\[
(f-Y)^2 - (h-Y)^2 =u(w+2(v-Y)).
\]
We will design a mean estimation procedure that holds uniformly for all  functions in the class
$$
\left\{ u(w+2(v-Y)) : u \in U, \ w \in W, \ v \in V\right\}.
$$
The procedure is given the identities of $u,w,v$ and the second half of the sample $(X_i,Y_i)_{i=N+1}^{2N}$, and returns its best guess of the mean $\E u(w+2(v-Y))$.

As will become clear immediately, that leads to  a uniform mean estimation procedure for all the functions of the form $(f-Y)^2- (h-Y)^2$.

\begin{theorem}
\label{thm:mixture}
There are absolute constants $c_1,c_2$ and a constant $c_3=c_3(L,\eta)$ such that the following holds.
Let $\theta\in(0,1)$, set $2^{s_0}\leq c_1 \theta^2 N \min\{ 1, \frac{r^2}{\sigma^2}\}$ and consider $r$ for which
\begin{align}
\label{eq:mixture.condition.r}
\E \sup_{h \in (F-F) \cap r D} G_h
&\leq \theta \sqrt{N} r\min\left\{1,\frac{r}{\sigma}\right\} \quad\text{and}\quad
\log \mathcal{M}(F,rD) \leq 2^{s_0-1}.
\end{align}
Then there exists a mean estimation procedure $\Psi_\ast$ that satisfies, with probability at least $1-2\exp(-c_2 2^{s_0})$,
for every $ u,w\in F-F$ and $v\in F$,
\begin{align*}
& \left| \Psi_\ast(u,w,v)  - \E u(w+2(v-Y))  \right|\\
& \quad \leq c_3 \theta \left( \max\{r, \|u\|_{L_2}\}  \max\{r, \|w\|_{L_2}\} +   r\max\{r, \|u\|_{L_2}\}  \frac{ r + \|v-Y\|_{L_2}^2 }{\max\{r,\sigma \} } \right).
\end{align*}
\end{theorem}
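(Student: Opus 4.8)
The plan is to split the target algebraically, $\E u(w+2(v-Y)) = \E uw + 2\,\E u(v-Y)$, and to feed each of the two pieces into the subgaussian estimators built in Section~\ref{sec:chaining}. Accordingly I will take $\Psi_\ast(u,w,v)$ to be the sum of a product-type estimate of $\E uw$ (with $u,w \in F-F$) and twice a multiplier-type estimate of $\E u\xi$, $\xi = v-Y$ (with $u \in F-F$ and $\xi$ ranging over $\{v-Y:v\in F\}$). Throughout, the integer $s_0$ is chosen so as to saturate the hypothesis $2^{s_0}\le c_1\theta^2 N\min\{1,r^2/\sigma^2\}$, which in particular yields $2^{s_0/2}/\sqrt N \le \sqrt{c_1}\,\theta\min\{1,r/\sigma\}$; the procedure is assembled from the optimal scalar estimator $\psi_\delta$ of Theorem~\ref{thm:optimal.1dim} together with admissible sequences of the localizations of $F$ and $F-F$ produced from the distance oracle $d$ as in Section~\ref{sec:chaining.admissible.seq}.

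For the product term I would peel $F-F$ into the localizations $S_j = (F-F)\cap 2^j r D$, $j\ge 0$. Given the input, the procedure reads off (up to a factor $\eta^2$, using $d$) the smallest $j$ with $\|u\|_{L_2}\le 2^j r$ and likewise $j(w)$ for $w$, and applies the product estimator $\Psi_{\mathbbm{Q}}$ of Theorem~\ref{thm:main-product} to the pair $H^1 = S_{j(u)}$, $H^2 = S_{j(w)}$. The point is that on every localization the Gaussian complexity is already controlled by the hypothesis on $r$: since $\rho\mapsto \rho^{-1}\E\sup_{(F-F)\cap\rho D}G_h$ is non-increasing by star-shapedness of $F-F$, \eqref{eq:mixture.condition.r} gives $\E\sup_{S_j}G_h \le \theta\sqrt N\,2^j r\min\{1,r/\sigma\}$; plugging $R_{H^1}\sim 2^{j(u)}r$, $R_{H^2}\sim 2^{j(w)}r$ and the bound on $2^{s_0/2}$ into the conclusion of Theorem~\ref{thm:main-product} collapses its three terms to $\lesssim \theta\,2^{j(u)}r\cdot 2^{j(w)}r \sim \theta\max\{r,\|u\|_{L_2}\}\max\{r,\|w\|_{L_2}\}$, which is the first term of the asserted bound.

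For the multiplier term I would first discretize the family of multipliers using the second half of \eqref{eq:mixture.condition.r}: pick a maximal $\eta r$-separated subset $\{v_1,\dots,v_m\}$ of $F$ with respect to $d$, so that $m\le \mathcal{M}(F,rD)\le \exp(2^{s_0-1})$ and every $v\in F$ has some $v_k$ with $\|v-v_k\|_{L_2}\le \eta^2 r$. Apply Corollary~\ref{cor:multiplier-multiple} on each $S_j$ to the finite collection $\{v_k-Y\}_{k\le m}$ (legitimate since $\|v_k-Y\|_{L_4}\le L\|v_k-Y\|_{L_2}$ by the norm-equivalence hypothesis), obtaining $\Phi_{\mathbbm{M}}(u,v_k-Y)$ with $|\Phi_{\mathbbm{M}}(u,v_k-Y)-\E u(v_k-Y)|\lesssim \theta\,\|v_k-Y\|_{L_2}\,2^{j(u)}r\min\{1,r/\sigma\}$ after the same substitutions as above, and $\|v_k-Y\|_{L_2}\lesssim \|v-Y\|_{L_2}+r$. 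The residual $\E u(v-v_k)$ is itself a product of $u\in S_{j(u)}$ with a function in $(F-F)\cap\eta^2 rD$, so I estimate it by one more call to $\Psi_{\mathbbm{Q}}$ with $H^1=S_{j(u)}$, $H^2=(F-F)\cap\eta^2 rD$; since $\E\sup_{(F-F)\cap\eta^2 rD}G_h\le \eta^2\theta\sqrt N r\min\{1,r/\sigma\}$ by the same monotonicity, this correction is $\lesssim \theta\,2^{j(u)}r\cdot r\min\{1,r/\sigma\}$. Using $\min\{1,r/\sigma\}=r/\max\{r,\sigma\}$ and $2^{j(u)}r\sim\max\{r,\|u\|_{L_2}\}$, the two contributions combine to $\lesssim \theta\, r\max\{r,\|u\|_{L_2}\}\,(r+\|v-Y\|_{L_2})/\max\{r,\sigma\}$, which is the second term of the bound. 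Setting $\Psi_\ast(u,w,v)=\Psi_{\mathbbm{Q}}(u,w)+2\Phi_{\mathbbm{M}}(u,v_k-Y)+2\Psi_{\mathbbm{Q}}(u,v-v_k)$, with $j(u),j(w)$ and $v_k$ determined as above, completes the construction.

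The probabilistic bookkeeping is routine: the number of nonempty localizations is $\lesssim \log({\rm diam}(F,L_2)/r)\le \log\mathcal{M}(F,rD)\le 2^{s_0-1}$, so a union bound over the $\lesssim 2^{2 s_0}$ pairs of localizations and the $\lesssim 2^{s_0}$ shell-level multiplier estimates keeps the total failure probability below $2\exp(-c_2 2^{s_0})$. I expect the genuine obstacle to be the multiplier step: one must ensure that the discretization of $\{v-Y:v\in F\}$ and the ensuing product-correction terms contribute only at the heavy-tailed-robust scale $\theta\, r\max\{r,\|u\|_{L_2}\}/\max\{r,\sigma\}$ rather than at the naive Cauchy--Schwarz scale $r\max\{r,\|u\|_{L_2}\}$ --- which is exactly where the Gaussian fixed-point property of $r$ in \eqref{eq:mixture.condition.r} and the matching bound on $2^{s_0}$ are indispensable, and where one must track that all constants emerging from $\Psi_{\mathbbm{Q}}$ and $\Phi_{\mathbbm{M}}$ depend only on $L$ and $\eta$.
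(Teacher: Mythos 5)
Your construction matches the paper's at every structural level: you discretize $F$ into centres $v_k$ via the entropy bound in \eqref{eq:mixture.condition.r}, decompose $u(w+2(v-Y)) = uw + 2u(v-v_k) + 2u(v_k-Y)$, estimate $\E uw$ and $\E u(v-v_k)$ with $\Psi_{\mathbbm{Q}}$, estimate $\E u(v_k-Y)$ with $\Phi_{\mathbbm{M}}$ applied to the $\exp(2^{s_0-1})$ multipliers $v_k-Y$, and use the gaussian fixed-point condition on $r$ together with $2^{s_0}\lesssim\theta^2 N\min\{1,r^2/\sigma^2\}$ to collapse the three error terms from Theorems~\ref{thm:main-product} and~\ref{thm:multiplier} (via Corollary~\ref{cor:multiplier-multiple}) to the claimed scale. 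The only genuine difference is how you handle the unbounded norms of $u$ and $w$: you peel $F-F$ into dyadic shells $S_j=(F-F)\cap 2^j r D$, select the shell containing each input via the distance oracle, and union-bound over pairs of shells; the paper instead scales $\tilde u\mapsto u = r\tilde u/\alpha(\tilde u)$ with $\alpha(\tilde u)=\max\{r,d(\tilde u,0)\}$ so that $u$ always lands in the single localization $(F-F)\cap \eta^2 r D$, applies $\Psi_{\mathbbm{Q}}$ and $\Phi_{\mathbbm{M}}$ there once, and rescales the output using the homogeneity of the mean, $\E\tilde u\tilde w = \frac{\alpha(\tilde u)\alpha(\tilde w)}{r^2}\E uw$. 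Both routes are correct and yield identical bounds; the rescaling trick is marginally cleaner since it needs one admissible sequence and one high-probability event, whereas your peeling version needs admissible sequences for each shell and a union bound over $\lesssim (\log(d_F/r))^2$ pairs, a logarithmic overcount that — as you correctly note — is absorbed into $\exp(-c\, 2^{s_0})$ because $\log(d_F/r)\lesssim\log\mathcal{M}(F,rD)\le 2^{s_0-1}$.
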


\begin{remark}
Note that the first condition in \eqref{eq:mixture.condition.r} just means that $r \geq \max\{r_{\mathbb{Q}}(\theta),r_{\mathbb{M}}(\theta)\}$.
\end{remark}

Let us show how the assertion of Theorem \ref{thm:mixture} leads to the proof of Theorem \ref{thm:fine.risk.intro}.

\begin{proof}[Proof of Theorem \ref{thm:fine.risk.intro}] Let $(X_i,Y_i)_{i=1}^{2N}$ be the given sample. Its first half is used compute the crude risk estimator (considered in Theorem \ref{thm:crude-risk-estimate.intro} and Lemma \ref{lem:noise.estimate}), and it outputs the value of  $\sigma_\ast$.
The second half of the sample is used to compute $\Psi_{\mathcal{L}}$, defined in what follows,  with a choice of $2^{s_0}$ that is determined by $\sigma_\ast$.

Fix a realization of the sample $(X_i,Y_i)_{i=1}^N$ for which the assertion of Theorem \ref{thm:crude-risk-estimate.intro} holds. This event has probability $1-\exp(-c_0N)$, and we have that
$$
\min\left\{ 1,\frac{r^2}{\sigma_\ast^2} \right\} \sim \min \left\{ 1,\frac{r^2}{\sigma^2} \right\}.
$$
Next, in the setting of Theorem \ref{thm:mixture} and using its notation, let $\theta=1/(12 c_3)$ and set
\[
2^{s_0} \sim  \theta^2 N\min\left\{1,\frac{r^2}{\sigma_\ast^2} \right\}
	\leq c_1 \theta^2 N\min\left\{1,\frac{r^2}{\sigma^2} \right\};
\]
Thus  $r$ satisfies \eqref{eq:mixture.condition.r}.
	
For $f,h\in F$ let
\[
\Psi_{\mathcal{L}}(f,h)
	= \Psi_\ast(f-h,f-h,h),
\]
where $\Psi_\ast$ is computed using $(X_i,Y_i)_{i=N+1}^{2N}$.
	If we set $u=w=f-h$ and $v=h$, then
	\[ \E u(w+2(v-Y))
	= \E (f-Y)^2 - \E (h-Y)^2,\]
	and in the high probability event in which the assertion of Theorem \ref{thm:mixture} holds,
	\begin{align*}
& \left| \Psi_{\mathcal{L}}(f,h)  -  \left( \E (f-Y)^2 - \E (h-Y)^2 \right)   \right|\\
& \quad \leq \frac{1}{12}  \left( \max\{r^2, \|f-h\|_{L_2}^2\} + r  \max\{r, \|f-h\|_{L_2}\}   \frac{ r + \|h-Y\|_{L_2} }{\max\{r,\sigma\}} \right).
\	\end{align*}

	Finally, if  $\|h-Y\|_{L_2}^2\leq  16 \max\{r^2, \sigma^2\}$, then using the inequality $a\max\{a,b\}\leq \max\{a^2,b^2\}$, it follows that
	\begin{align*}
& \left| \Psi_{\mathcal{L}}(f,h)  -  \left( \E (f-Y)^2 - \E (h-Y)^2 \right)   \right|\\
&\quad  \leq \frac{1}{12}   \max\{r^2, \|f-h\|_{L_2}^2\}  \left( 1+ \frac{ r + 4 \max\{r, \sigma\}}{\max\{r, \sigma\}} \right)
\\
 & \quad \leq \frac{1}{2}  \max\{r^2, \|f-h\|_{L_2}^2\} ,
	\end{align*}
	as claimed.
\end{proof}

Let us turn to the definition of the procedure $\Psi_\ast$.
The starting point is to use  the distance oracle and the entropy estimate in \eqref{eq:mixture.condition.r} to construct a partition of  $F$ to at most $\exp(2^{s_0-1})$ sets $V_j$ with centres $v_j$.
Every $v \in F$ belongs to a unique $V_j$, and
\[
\|v-v_j\|_{L_2} \leq \eta^2 r=r_0.
\]
Moreover, for every $u,w$ and $v$,
$$
u(w+2(v-Y))=uw+ 2u(v-v_j)+2u(v_j-Y).
$$
Set $\Psi_{\mathbbm{Q}}$ to be the product  mean estimation procedure for the classes $H^1=H^2=(F-F) \cap r_0 D$, and let $\Psi_{\mathbbm{M}}$  be the  mean estimation procedure for the class $H=(F-F) \cap r_0 D$ and the $\exp(2^{s_0-1})$ multipliers  $v_j - Y$.

Next, recall that $r \leq r_0$, for $\tilde{u} \in F-F$ set $\alpha(\tilde{u})=\max\{r, d(\tilde{u},0)\}$, and note that
$$
\frac{r \tilde{u}}{\alpha(\tilde{u})} \in (F-F) \cap r_0 D.
$$

With that in mind, for $\tilde{w}, \tilde{u} \in F-F$ set
$$
u = \frac{r \tilde{u}}{\alpha(\tilde{u})}, \ \ {\rm and} \ \   w = \frac{r \tilde{w}}{\alpha(\tilde{w})},
$$
and define the following mean estimation procedures for $\tilde{w}, \tilde{u} \in F-F$ and $v \in F$.
Let
$$
\Psi_1(\tilde{u},\tilde{w})
=\frac{\alpha(\tilde{u})}{r} \cdot \frac{\alpha(\tilde{w})}{r} \Psi_{\mathbbm{Q}}(u,w).
$$
In other words, scale-down $\tilde{u}$ and $\tilde{w}$ into $(F-F) \cap r_0 D$; then use the product mean estimation procedure  in  $(F-F) \cap r_0 D$; and finally, re-scale using the inverse factor.
Since $\E uw$ is homogeneous, the result will be an estimator of $\E \tilde{u} \tilde{w}$.

In a similar fashion, set
$$
\Psi_2(\tilde{u},v) = \frac{\alpha(\tilde{u})}{r} \Psi_{\mathbbm{Q}}(u,v-v_j),
$$
with the scaling and re-scaling only taking place for $\tilde{u}$.
Finally, let
$$
\Psi_3(\tilde{u},v) = \frac{\alpha(\tilde{u})}{r} \Psi_{\mathbbm{M}}(u,v_j-Y)
$$
be a uniform estimator for functions of the form $\tilde{u}\xi_j$ for the $\exp(2^{s_0-1})$ multipliers $\xi_j=v_j-Y$.

Now set for every $\tilde{u},\tilde{w} \in F-F$ and $v \in F$,
$$
\Psi_*(\tilde{u},\tilde{w},v) = \Psi_1(\tilde{u},\tilde{w})+2\Psi_2(\tilde{u},v)+2\Psi_3(\tilde{u},v).
$$
The key to the performance of $\Psi_\ast$ is its behaviour on the scaled down versions, which we explore next.

\begin{proposition}
\label{prop:mixture.localized}
In the setting of Theorem \ref{thm:mixture} and using its notation, we have that with probability at least $1-2\exp(-c_2 2^{s_0})$,
for every $u,w \in (F-F) \cap r_0D $ and $v \in F$,
\begin{align*}
\left|\Psi_{\mathbb{Q}}(u,w) - \E uw \right|
&\leq c_3\theta r^2, \\
\left| \Psi_{\mathbb{Q}}(u,v-v_j) - \E  u(v-v_j) \right|
&\leq c_3 \theta r^2,\quad \text{and}\\
\left| \Psi_{\mathbb{M}}(u,v_j-Y) - u(v_j-Y) \right|
&\leq c_3 \theta r^2 \cdot  \frac{r+\|v-Y\|_{L_2}}{\max\{r,\sigma\}}.
\end{align*}
\end{proposition}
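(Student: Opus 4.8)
The plan is to obtain all three inequalities by feeding the quantitative content of the hypotheses of Theorem \ref{thm:mixture} into the results already in hand: Theorem \ref{thm:main-product} for the two statements about $\Psi_{\mathbb{Q}}$, and Corollary \ref{cor:multiplier-multiple} for the one about $\Psi_{\mathbb{M}}$. Write $H=(F-F)\cap r_0D$ with $r_0=\eta^2r\geq r$. Two preliminary estimates do all the work. First, $R_H=\sup_{h\in H}\|h\|_{L_2}\leq r_0=\eta^2r$. Second, since $F-F$ is star-shaped around $0$, the map $\rho\mapsto\rho^{-1}\E\sup_{h\in(F-F)\cap\rho D}G_h$ is non-increasing, so the gaussian fixed-point condition in \eqref{eq:mixture.condition.r} propagates from scale $r$ up to scale $r_0$:
\[
\E\sup_{h\in H}G_h\ \leq\ \frac{r_0}{r}\,\E\sup_{h\in(F-F)\cap rD}G_h\ \leq\ \eta^2 r\,\theta\sqrt N\,\min\Big\{1,\tfrac r\sigma\Big\}.
\]
Likewise $2^{s_0}\leq c_1\theta^2N\min\{1,r^2/\sigma^2\}$ gives $2^{s_0/2}\leq\sqrt{c_1}\,\theta\sqrt N\,\min\{1,r/\sigma\}$ and, in particular, $2^{s_0}\leq c_1N$; finally, by the second part of \eqref{eq:mixture.condition.r} the cell-centres $v_j$ of the preprocessing partition form an $r$-separated subset of $F$ and so number at most $\mathcal{M}(F,rD)\leq\exp(2^{s_0-1})$, which makes $\{v_j-Y\}$ an admissible multiplier family for Corollary \ref{cor:multiplier-multiple}.

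For the first two bounds, apply Theorem \ref{thm:main-product} with $H^1=H^2=H$ and the above value of $2^{s_0}$. This produces a single event of probability at least $1-2\exp(-2^{s_0})$ on which, uniformly over all pairs in $H\times H$,
\[
\big|\Psi_{\mathbb{Q}}(f,h)-\E fh\big|\ \leq\ \frac{c(L,\eta)}{\sqrt N}\Big(2R_H\,\E\sup_{h\in H}G_h+2^{s_0/2}R_H^2\Big).
\]
Inserting $R_H\leq\eta^2r$, the displayed bound on $\E\sup_{h\in H}G_h$, and $2^{s_0/2}\leq\sqrt{c_1}\,\theta\sqrt N$ collapses the right-hand side to $c_3\theta r^2$ with $c_3=c_3(L,\eta)$; this is the first bound, with $(f,h)=(u,w)$. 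The second bound is the same inequality on the same event applied to $(f,h)=(u,v-v_j)$: since $v,v_j\in F$ and, by the choice of the partition, $\|v-v_j\|_{L_2}\leq\eta^2r=r_0$, the pair $(u,v-v_j)$ also lies in $H\times H$, whence $\big|\Psi_{\mathbb{Q}}(u,v-v_j)-\E u(v-v_j)\big|\leq c_3\theta r^2$.

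For the third bound, apply Corollary \ref{cor:multiplier-multiple} with this $H$ and the multiplier set $\{v_j-Y\}$; on an event of probability at least $1-2\exp(-2^{s_0-1})$, for every $u\in H$ and every $j$,
\[
\big|\Psi_{\mathbb{M}}(u,v_j-Y)-\E u(v_j-Y)\big|\ \leq\ \frac{c(L)\,\|v_j-Y\|_{L_4}}{\sqrt N}\Big(\E\sup_{h\in H}G_h+2^{s_0/2}R_H\Big).
\]
Norm equivalence and the triangle inequality give $\|v_j-Y\|_{L_4}\leq L\|v_j-Y\|_{L_2}\leq L\big(\|v-Y\|_{L_2}+\eta^2r\big)\leq c(L,\eta)\big(r+\|v-Y\|_{L_2}\big)$, while the bracketed factor is at most $c(\eta)\,\theta\sqrt N\,r\,\min\{1,r/\sigma\}=c(\eta)\,\theta\sqrt N\,\dfrac{r^2}{\max\{r,\sigma\}}$, since \emph{both} $\E\sup_{h\in H}G_h$ and $2^{s_0/2}R_H$ carry the factor $\min\{1,r/\sigma\}=r/\max\{r,\sigma\}$ established above. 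Multiplying the two bounds yields $c_3\theta r^2\cdot\dfrac{r+\|v-Y\|_{L_2}}{\max\{r,\sigma\}}$, as claimed. Intersecting the event of Theorem \ref{thm:main-product} with that of Corollary \ref{cor:multiplier-multiple} and relabelling constants completes the proof, the failure probability being at most $2\exp(-c_2 2^{s_0})$ for a suitable absolute $c_2$. The whole argument is bookkeeping on top of Theorem \ref{thm:main-product} and Corollary \ref{cor:multiplier-multiple}; the only two steps that require genuine care are transferring the gaussian fixed-point hypothesis from radius $r$ to radius $r_0=\eta^2r$ via star-shape monotonicity, and keeping the factor $\min\{1,r/\sigma\}$ attached to \emph{both} the gaussian term and the $2^{s_0/2}$ term, so that the third bound comes out with the sharp denominator $\max\{r,\sigma\}$ rather than a weaker $\sqrt N$. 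I expect no obstacle beyond these.
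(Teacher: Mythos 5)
Your argument is correct and follows exactly the paper's proof: apply Theorem \ref{thm:main-product} to $H^1=H^2=(F-F)\cap r_0D$ for the two $\Psi_{\mathbb{Q}}$ bounds, apply Corollary \ref{cor:multiplier-multiple} with the at-most-$\exp(2^{s_0-1})$ multipliers $v_j-Y$ for the $\Psi_{\mathbb{M}}$ bound, and then substitute the fixed-point inequality \eqref{eq:mixture.condition.r} (transferred from radius $r$ to $r_0$ via star-shape monotonicity) together with the constraint on $2^{s_0}$. The paper is slightly terser about the monotonicity transfer and about keeping the $\min\{1,r/\sigma\}$ factor attached to both terms in the $\Psi_{\mathbb{M}}$ estimate, but these are the same bookkeeping steps you carry out explicitly.
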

\begin{proof}
Observe that the choice $2^{s_0} \leq c_1 N$ is  valid in the context of  Theorem \ref{thm:main-product}.
Therefore, with probability at least $1-2\exp(- 2^{s_0})$, for every $u,w \in (F-F) \cap r_0 D$,
\[
\left|\Psi_{\mathbb{Q}}(u,w) - \E uw \right|
\leq  \frac{ c_2}{ \sqrt N} \left( r_0 \E \sup_{h \in (F-F) \cap r_0 D}G_h +  2^{s_0/2} r_0^2\right)
=(1),
\]
for $c_2=c_2(L,\eta)$.
Moreover, for every $v\in F$, $v-v_j\in (F-F)\cap r_0 D$---with $v_j$ being the centre in the unique set in the partition $(V_j)$ to which $v$ belongs.
Hence, on the same event, for every $u\in (F-F)\cap r_0D$ and $v \in F$,
$$
\left| \Psi_{\mathbb{Q}}(u,v-v_j) - \E  u(v-v_j) \right|
\leq   (1).
$$

Next, by Corollary \ref{cor:multiplier-multiple} and  since
\[\|v_j - Y\|_{L_4} \leq c_3(L) \left(r_0+\|v-Y\|_{L_2}\right), \]
we have that with probability at least $1-2\exp(-2^{s_0-1})$, for every $u\in (F-F)\cap r_0D$ and $v_j$, $1\leq j\leq \exp(2^{s_0-1})$,
\begin{align*}
\left| \Psi_{\mathbb{M}}(u,v_j-Y) - \E u(v_j-Y) \right|
& \leq c_4 \frac{\|v_j-Y\|_{L_4}}{\sqrt{N}} \left(\E \sup_{h \in (F-F) \cap r_0 D} G_h + 2^{s_0/2} r_0 \right)
\\
& \leq c_5 \frac{r_0+\|v-Y\|_{L_2}}{\sqrt{N}} \left(\E \sup_{h \in (F-F) \cap r_0 D} G_h + 2^{s_0/2} r_0 \right)
=(2),
\end{align*}
for constants $c_4$ and $c_5$ that depend on $L,\eta$.

\vspace{0.5em}
To control the error terms $(1)$ and $(2)$, by the condition on $r$ in \eqref{eq:mixture.condition.r} and since $r\leq r_0\leq \eta^2 r$, it is evident that
\begin{align}
\label{eq:proof.some.equation.local.gamma}
\E \sup_{h \in (F-F) \cap r_0 D} G_h
\leq c_6(\eta) \theta \sqrt{N} r \min\left\{1,\frac{r}{\sigma}\right\},
\end{align}
and by the choice of $s_0$,
\begin{align}
\label{eq:proof.some.equaiton.for.s0}
2^{s_0}\lesssim \theta^2 N\min\left\{ 1,\frac{r^2}{\sigma^2} \right\}.
\end{align}
Hence,
\begin{align*}
(1)
\leq c_7(L,\eta) \theta r^2.
\end{align*}
Finally, using  \eqref{eq:proof.some.equation.local.gamma}, \eqref{eq:proof.some.equaiton.for.s0}, the fact that $r_0\leq \eta^2 r$, and the elementary fact $\min\{1,\frac{r}{\sigma}\} = \frac{r}{\max\{r,\sigma\}}$,
\begin{align*}
(2)
& \leq c_{8} (r_0+\|v-Y\|_{L_2})\cdot  \theta  r \min\left\{1, \frac{r}{\sigma} \right\}    \\
&\leq   c_{9}\theta   r^2 \cdot \frac{ r+\|v-Y\|_{L_2} }{\max\{r,\sigma\}}
\end{align*}
for constants $c_{8}$ and $c_{9}$ that depend only on $L,\eta$.
\end{proof}

\begin{proof}[{\bf Proof of Theorem \ref{thm:mixture}}]
Fix a realization of the high probability event in which the assertion of Proposition \ref{prop:mixture.localized} holds.

To control the error of $\Psi_\ast$ (when $\alpha(\tilde{u}) >r$ and/or  $\alpha(\tilde{w}) >r$), recall the scaling-down
$$
u = \frac{r \tilde{u}}{\alpha(\tilde{u})} \ \ {\rm and} \ \   w = \frac{r \tilde{w}}{\alpha(\tilde{w})}
$$
implying that  $u,w \in (F-F) \cap r_0D$. Hence
\begin{align*}
\left|\frac{\alpha(\tilde{u}) \alpha(\tilde{w})}{r^2} \cdot  \left( \Psi_{\mathbbm{Q}}(u,w) - \E uw \right) \right|
\leq & \frac{\alpha(\tilde{u}) \alpha(\tilde{w}) }{r^2} \cdot c(L)\theta r^2
\\
= & c(L)\theta \alpha(\tilde{u}) \alpha(\tilde{w}).
\end{align*}
But
$$
\frac{\alpha(\tilde{u}) \alpha(\tilde{w}) }{r^2} \E uw = \E \tilde{u} \tilde{w}, \ \ {\rm and} \ \ \frac{\alpha(\tilde{u}) \alpha(\tilde{w})}{r^2} \Psi_{\mathbbm{Q}}(u,w) = \Psi_{1}(\tilde{u},\tilde{w}).
$$
Thus, for every $\tilde{u}, \tilde{w} \in F - F$,
\begin{align*}
|\Psi_1(\tilde{u},\tilde{w}) - \E \tilde{u}\tilde{w} |
 \leq & c_1(L,\eta)\theta \alpha(\tilde{u}) \alpha(\tilde{w})
\\
=& c_1(L,\eta) \theta \max\{r, \|\tilde{u}\|_{L_2}\} \cdot \max\{r, \|\tilde{w}\|_{L_2}\}.
\end{align*}

A similar analysis for the other two terms, (keeping in mind the $v-v_j$ is not scaled-down), shows that
\begin{align*}
|\Psi_2(\tilde{u},v) - \E  \tilde{u}(v-v_j) |
&\leq c_2(L,\eta)  \theta r \cdot \max\{r, \|\tilde{u}\|_{L_2}\} \quad\text{and} \\
|\Psi_3(\tilde{u},v) - \E \tilde{u}(v_j-Y)|
&\leq c_2(L,\eta)  \theta r \cdot \max\{r, \|\tilde{u}\|_{L_2}\} \frac{r_0+\|v-Y\|_{L_2}}{\max\{r,\sigma\}}.
\end{align*}

The proof follows because
\[ \E \tilde{u}(\tilde{w}+2(v-Y))
= \E \tilde{u}\tilde{w}+ 2\E \tilde{u}(v-v_j)+2\E \tilde{u}(v_j-Y). \qedhere \]
\end{proof}

\vspace{1em}

\noindent
{\bf Acknowledgement:} This research was funded in whole or in part by the Austrian Science Fund (FWF) [doi: 10.55776/P34743 and 10.55776/ESP31] and the Austrian National Bank [Jubil\"aumsfond, project 18983].

\bibliographystyle{abbrv}

\appendix
\section{The entropy estimate}
\label{sec:app.entropy}

Recall that $d_F={\rm diam}(F,L_2)$ and that for $\kappa\in(0,1)$,
\begin{align*}
r^*(\kappa)
& = \inf\left\{ r>0 : \E \sup_{h \in (F-F) \cap rD} G_h \leq \kappa \sqrt{N} r \min\left\{1, \frac{r}{\sigma}\right\} \right\}, \\
\lambda^\ast(\kappa)
&= \inf\left\{ r>0 : \log \mathcal{M}(F, rD)
\leq c_1  N \min\left\{1, \frac{r^2}{\sigma^2} \right\} \right\},\quad\text{and} \\
\tilde{r}^{\,\ast}(\kappa) &= \inf\left\{ r>0 : \E \sup_{h \in (F-F) \cap rD} G_h
	\leq   \kappa \sqrt{N} \theta(r)  r\min\left\{1, \frac{r}{\sigma}\right\}   \right\},
\end{align*}
where $\theta(r) = 1/\max\{1,\sqrt{\log(2d_F/r)}\}$.

Let us prove the claim made previously---that these quantities are not too far from each other.
To that end recall that trivially $\lambda^\ast(\kappa) \leq 2d_F$ for any $\kappa$ because $\mathcal{M}(F, 2d_F D)=1$.
Hence, the focus is only on cases when $r^\ast\leq 2d_F$.

\begin{lemma}
\label{lem:covering.almost.automatic}
There is an absolute constant $c$ such that for every $\kappa\in(0,1)$, $\lambda^\ast(\kappa)\leq \tilde{r}^{\,\ast}(c\kappa)$.
Moreover, if $r^\ast(c\kappa)\leq 2d_F$ satisfies 
\[ r^\ast(c\kappa) \sqrt{  \log\left( \frac{4d_F}{r^\ast(c\kappa)} \right) } \leq \sigma\]
then
\[\lambda^\ast(\kappa) \leq r^\ast(c\kappa)  \sqrt{\log\left(\frac{4d_F}{r^\ast(c\kappa)}\right)}.\]
\end{lemma}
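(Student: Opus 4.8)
The plan is to deduce both assertions of Lemma~\ref{lem:covering.almost.automatic} from one comparison between the \emph{global} metric entropy of the convex class $F$ and the \emph{local} Gaussian width of $F-F$. Concretely, I would first establish the following key estimate: there is an absolute constant $C$ such that for every $r\le d_F$,
\[
\log\mathcal{M}(F,rD)\ \le\ C\bigl(1+\log(2d_F/r)\bigr)\Bigl(\tfrac1r\,\E\sup_{h\in(F-F)\cap rD}G_h\Bigr)^{2},
\]
the left-hand side being $0$ when $r>d_F$ since then $\mathcal{M}(F,rD)=1$. To prove this I would argue in five steps. (i) Translating a maximal $r$-separated subset of $F$ by a fixed $f_0\in F$ gives $\mathcal{M}(F,rD)\le\mathcal{M}(F-F,rD)$, and $F-F$ is star-shaped around $0$ with $\mathrm{diam}(F-F)=2d_F$. (ii) Iterating the elementary inequality $\mathcal{M}(T,\rho D)\le\mathcal{M}(T,2\rho D)\cdot\sup_{y}\mathcal{M}\bigl(T\cap B(y,2\rho),\rho D\bigr)$ along the dyadic scales $\rho=r,2r,4r,\dots$ up to $\sim 2d_F$ yields $\log\mathcal{M}(F-F,rD)\le\sum_{j=0}^{J}\log\sup_{y}\mathcal{M}\bigl((F-F)\cap B(y,2^{j+1}r),\,2^{j}rD\bigr)$ with $J+1\lesssim 1+\log(2d_F/r)$. (iii) By Sudakov minoration, each summand is at most $c\,(2^{j}r)^{-2}\bigl(\E\sup_{h\in(F-F)\cap B(y,2^{j+1}r)}G_h\bigr)^{2}$. (iv) Since $y\in F-F$, recentring at $y$ and using the convexity identity $(F-F)-(F-F)\subseteq 2(F-F)$ shows $\E\sup_{h\in(F-F)\cap B(y,2^{j+1}r)}G_h\le 2\,\E\sup_{h\in(F-F)\cap 2^{j}rD}G_h$. (v) The star-shape of $F-F$ makes $s\mapsto\tfrac1s\E\sup_{(F-F)\cap sD}G_h$ nonincreasing, so the last quantity is $\le 2^{j}\,\E\sup_{(F-F)\cap rD}G_h$; hence every summand is $\le c'\bigl(\tfrac1r\E\sup_{(F-F)\cap rD}G_h\bigr)^{2}$, independently of $j$, and summing the $\lesssim 1+\log(2d_F/r)$ terms finishes the key estimate.

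Given the key estimate, the first assertion follows by applying it at the scale $\rho:=\tilde{r}^{\,\ast}(c\kappa)$ (if $\rho>d_F$ there is nothing to prove). Inserting the defining inequality $\E\sup_{(F-F)\cap\rho D}G_h\le c\kappa\sqrt N\,\theta(\rho)\rho\min\{1,\rho/\sigma\}$ and the elementary bound $\bigl(1+\log(2d_F/\rho)\bigr)\theta(\rho)^{2}\le 2$ (immediate from $\theta(\rho)^{-2}=\max\{1,\log(2d_F/\rho)\}$) gives $\log\mathcal{M}(F,\rho D)\le 2Cc^{2}\kappa^{2}N\min\{1,\rho^{2}/\sigma^{2}\}$, which is at most $\kappa N\min\{1,\rho^{2}/\sigma^{2}\}$ once $c$ is a small enough absolute constant (using $\kappa<1$); hence $\lambda^{\ast}(\kappa)\le\rho$. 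For the second assertion, put $\rho:=r^{\ast}(c\kappa)$ and $R:=\rho\sqrt{\log(4d_F/\rho)}$, and assume $\rho\le 2d_F$ and $R\le\sigma$. If $R>d_F$ then $\mathcal{M}(F,RD)=1$ and $\lambda^{\ast}(\kappa)\le R$; otherwise, since $\rho\le 2d_F$, one checks $\log(4d_F/\rho)\ge 1$, so $\rho\le R\le d_F$. Apply the key estimate at scale $R$, pass from scale $R$ to scale $\rho$ by monotonicity of $s\mapsto\tfrac1s\E\sup_{(F-F)\cap sD}G_h$, and insert the defining inequality of $r^{\ast}(c\kappa)$ together with $\rho\le R\le\sigma$; using $1+\log(2d_F/R)\le 1+\log(4d_F/\rho)\le 2\log(4d_F/\rho)$ and $R^{2}=\rho^{2}\log(4d_F/\rho)$, this produces $\log\mathcal{M}(F,RD)\le 2Cc^{2}\kappa^{2}N\,R^{2}/\sigma^{2}\le\kappa N\min\{1,R^{2}/\sigma^{2}\}$ for $c$ small, so $\lambda^{\ast}(\kappa)\le R$.

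The main obstacle is the key estimate itself: controlling the global entropy $\log\mathcal{M}(F,rD)$ by the \emph{local} width $\E\sup_{(F-F)\cap rD}G_h$ with only a logarithmic overhead $\log(d_F/r)$. The naive route — Sudakov minoration applied to $F$ — only bounds $\log\mathcal{M}(F,rD)$ by $(d_F/r)^{2}$ times that width, which is far too lossy (and sharp for a ball); the gain comes from peeling $F-F$ into dyadic shells, pulling each shell's Gaussian width back to $(F-F)\cap rD$ via the star-shape and the inclusion $(F-F)-(F-F)\subseteq 2(F-F)$, and observing that the per-shell cost is scale-free, so that only the \emph{number} of shells, $\sim\log(d_F/r)$, survives. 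It is precisely this factor that the weight $\theta(r)=1/\max\{1,\sqrt{\log(2d_F/r)}\}$ in the definition of $\tilde{r}^{\,\ast}$ is tailored to absorb, and the slack in the hypothesis $r^{\ast}(c\kappa)\sqrt{\log(4d_F/r^{\ast}(c\kappa))}\le\sigma$ plays the same role in the second assertion.
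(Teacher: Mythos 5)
Your proof is correct and takes essentially the same route as the paper's: both establish a bound of the form $\log\mathcal{M}(F,rD)\lesssim \log(2d_F/r)\cdot\bigl(\tfrac{1}{r}\,\E\sup_{(F-F)\cap crD}G_h\bigr)^2$ via Sudakov minoration on localizations of $F-F$, and then plug the defining inequalities of $\tilde r^{\,\ast}$ and $r^\ast$ into that bound. The only substantive difference is that the paper invokes \cite[Lemma 3.1]{mendelson2017local} as a black box for the local-entropy iteration $\log H(s,t)\le c_0\log(2s/t)\log H(4t,t)$, whereas you reprove that iteration from scratch via dyadic peeling, the translation step $(F-F)-(F-F)\subseteq 2(F-F)$ (valid by convexity of $F$), and the star-shape monotonicity of $s\mapsto\tfrac1s\E\sup_{(F-F)\cap sD}G_h$ — making the argument self-contained. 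One small caveat, not a gap: your claim ``since $\rho\le 2d_F$, $\log(4d_F/\rho)\ge 1$'' holds for $\log_2$ but gives only $\ge\log 2$ under the natural logarithm, so to guarantee $R\ge\rho$ (needed for the monotonicity step) one should work at a fixed multiple such as $2R$; the paper does precisely this, using $r=4R$ — and indeed its own proof concludes $\lambda^\ast(\kappa)\le 4R$ rather than the $\le R$ stated in the lemma, so this constant-level slack is shared.
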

\begin{proof}
For every $s,t>0$, set
\[H(s,t)=\sup_{f\in F} {\mathcal N}(F \cap (f+sD), tD),\]
i.e.\ $H(s,t)$ is the $t$ covering number with respect to the $L_2$ norm of the `largest' localization of $F$ of radius $s$.
Using that $F$ is convex, one may show that
$$
\log H(s,t) \leq c_0 \log\left(\frac{2s}{t}\right) \cdot \log H(4t,t),
$$
see \cite[Lemma 3.1]{mendelson2017local}.
Therefore, setting $s= d_F$,
$$
\log {\mathcal N}(F,tD) \leq c_0 \log\left(\frac{2d_F}{t}\right) \cdot \log H(4t,t).
$$
Next, note that for every $f \in F$, if $h \in F \cap (f+4tD)$, then $h=f+(h-f)$, and $h-f \in (F-F) \cap 4tD$. Therefore,
\[
F \cap (f+4tD) \subset f + \left[(F-F) \cap 4tD\right],
\]
implying that
$$
{\mathcal N}(F \cap (f+4tD),tD) \leq {\mathcal N}( (F-F) \cap 4tD,tD).
$$
By Sudakov's inequality (see, e.g., \cite{ledoux1991probability})
$$
\log {\mathcal N}( (F-F) \cap 4tD,tD) \leq c_1 \left( \frac{ \E \sup_{h \in (F-F) \cap 4tD} G_h }{t} \right)^2,
$$
and thus,
\begin{align}
\label{eq:local.to.global.sudakov}
\log {\mathcal N}(F,tD) \leq c_2 \log\left(\frac{2d_F}{t}\right) \cdot \left( \frac{ \E \sup_{h \in (F-F) \cap 4tD} G_h }{t} \right)^2.
\end{align}

The claim that $\lambda^\ast(\kappa)\leq \tilde{r}^{\,\ast}(c\kappa)$ follows from \eqref{eq:local.to.global.sudakov}, the definitions of $\lambda^\ast$ and $\tilde{r}^{\,\ast}$, and the standard relations between packing numbers and covering numbers.

As for the other claim,  let $r^\ast(c\kappa)\leq 2 d_F$ satisfy that
\[
r^\ast(c\kappa) \sqrt{\log\left(\frac{4d_F}{r^\ast(c\kappa)}\right)} \leq  \sigma .
\]
Set $r= 4 r^\ast(c\kappa) \sqrt{\log(\frac{4d_F}{r^\ast(c\kappa)}})$ so that $r\leq  4 \sigma$.
Moreover,  $\frac{1}{2}r\geq r^\ast(c\kappa)$, and hence  ${\mathcal M}(F,rD) \leq {\mathcal N}(F,r^\ast(c\kappa)D)$. 
Thus, by \eqref{eq:local.to.global.sudakov} and the definition of $r^\ast(c\kappa)$,
\begin{align*}
\log {\mathcal N}(F,r^\ast(c\kappa)D)
&\leq c_2 \log\left(\frac{2d_F}{r^\ast(c\kappa)}\right) \cdot  (c\kappa)^2 N \frac{(r^\ast(c\kappa))^2}{\sigma^2}  \\
& \leq c_3  (c \kappa)^2 N  \frac{r^2}{\sigma^2}
\leq  16c_3  (c \kappa)^2 N  \min\left\{1, \frac{r^2}{\sigma^2}\right\},
\end{align*}
implying that $\lambda^\ast(\kappa) \leq r$, as claimed.
\end{proof}

\section{Some of the weaknesses of the Rademacher complexities}
\label{sec:app.rad}

The one obvious advantage in a complexity parameter that depends on the expected supremum of the gaussian process is that it is determined solely by the $L_2$-structure of the indexing class.
In contrast, the Rademacher complexities depend  on the Rademacher averages, that is, the expected supremum of a Bernoulli process indexed by localizations of the random set $\{ f(X_i)_{i=1}^N : f \in F\}$, making them inherently  harder to handle.
On top of that, the expected supremum of the gaussian process is likely to be much smaller than the corresponding Rademacher averages. While (under minimal assumptions of $H$), we have that
$$
\E \sup_{h \in H} \frac{1}{\sqrt{N}} \sum_{i=1}^N \eps_i h(X_i)  \to \E \sup_{h \in H} G_h,
$$
the gap between the two expectations can be substantial for a fixed value of $N$---especially in heavy-tailed situations, as the next example shows.

\begin{example}
Fix $k$ to be specified in what follows.
Let $\tilde{x}=\eps |\tilde{x}|$ be a symmetric random variable for $\eps$ that is a random sign, independent of $|\tilde{x}|$ and $|\tilde{x}|$ that takes the values $k^{1/4}$ and $1$.
 If
 \[
 \PP(|\tilde{x}|=k^{1/4})=\frac{1}{k} \ \ {\rm and } \ \ \PP(|\tilde{x}|=1)=1-\frac{1}{k},
\]
then  $1 \leq \|\tilde{x}\|_{L_2} \leq 2$ and  $\|\tilde{x}\|_{L_4} \leq 2$. Thus, setting $x=\eps \frac{|\tilde{x}|}{c_0}$ for a suitable absolute constant $1 \leq c_0 \leq 2$, it follows that $x$ is symmetric; has variance $1$; satisfies that  $\|x\|_{L_4} \leq 2$; and takes the values $\pm{1/c_0}$ and $\pm{k^{1/4}/c_0}$.

Let $x_1,...,x_d$ be independent copies of $x$ and let $X=(x_1,...,x_d)$. Using the independence and symmetry of the coordinates of $X$, it is standard to verify that $X$ is isotropic and satisfies $L_4-L_2$ norm equivalence with an absolute constant $L$.

Let $T=B_1^d$ and recall that if $G$ is the standard gaussian random vector in $\R^d$, then
$$
\E \sup_{t \in B_1^d} \inr{G,t} = \E \max_{1 \leq j \leq d} |g_j| \sim \sqrt{\log d}.
$$
On the other hand, if $X_1,...,X_N$ are independent copies of $X$, then
$$
\E \sup_{t \in B_1^d} \frac{1}{\sqrt{N}} \sum_{i=1}^N \eps_i \inr{X_i,t} = \E \max_{1 \leq j \leq d} |U_j|,
$$
where
$$
U_j = \frac{1}{\sqrt{N}} \sum_{i=1}^N \inr{X_i,e_j} = \frac{1}{\sqrt{N}} \sum_{i=1}^N x_{ij}.
$$
Let $U=\frac{1}{\sqrt{N}} \sum_{i=1}^N x_i$, and thus the random variables $(U_j)_{j=1}^d$ are independent and distributed according to $U$.
It is straightforward to verify that if $\PP(|U| \geq M) \geq \frac{10}{d}$, then $\E \max_{1 \leq j \leq d} |U_j|  \geq c_1M$ for an absolute constant $c_1$.

To identify a suitable $M$, fix a constant $c_2$ and consider the event
$$
{\mathcal A}= \left\{ \text{There is  a single index $1\leq s\leq N$ s.t.}\ |x_s| = \frac{k^{1/4}}{c_0} \ \text{ and } \ \left| \sum_{i=1}^N \eps_i \right| \leq c_2\sqrt{N}\right\}.
$$
On that event,
\begin{equation*}
\left|\sum_{i=1}^N x_i \right| \geq |x_s| - \left|\sum_{i \not = s} \eps_i \cdot \frac{1}{c_0}\right|
\geq |x_s| - \frac{1}{c_0} \left(\left|\sum_{i=1}^N \eps_i \right|+1\right)
\geq \frac{1}{c_0} (k^{1/4} - c_3 \sqrt{N}).
\end{equation*}
Therefore, if $k^{1/4} \geq 2c_3 \sqrt{N}$ and $\PP({\mathcal A}) \geq \frac{10}{d}$, then $\E \max_{1 \leq j \leq d} |U_j| \geq c_4 \frac{k^{1/4}}{\sqrt{N}}$.

To control $\PP({\mathcal A})$, observe that by independence and for $c_2$ sufficiently large,
$$
\PP({\mathcal A}) \geq \frac{N}{k} \cdot \left(1-\frac{1}{k}\right)^{N-1} \cdot \frac{1}{2} \geq \frac{10}{d},
$$
provided that $N \leq k \leq c_5Nd$.

With that in mind, let $\alpha>1$ and set $N=d^{1/\alpha}$ and $k \sim N^{1+\alpha}$. 
As all the conditions are verified with these choices, we have that
$$
\E \sup_{t \in B_1^d} \frac{1}{\sqrt{N}} \sum_{i=1}^N \eps_i \inr{X_i,t}
\geq c_4 \frac{k^{1/4}}{\sqrt{N}} \gtrsim N^{\frac{1}{4}(\alpha-1)},
$$
which is much bigger than $\E \sup_{t \in B_1^d} \inr{G,t} \sim \sqrt{\log d} = \sqrt{\alpha \log N}$.
\end{example}

\end{document}